\newtheorem{thm}{Theorem}[section]
\newtheorem{cor}[thm]{Corollary}
\newtheorem{lem}{Lemma}[section]
\newtheorem{prop}{Proposition}[section]
\theoremstyle{definition}
\newtheorem{defn}{Definition}[section]
\theoremstyle{remark}
\newtheorem{rem}{Remark}[section]
\numberwithin{equation}{section}
\newcommand{\bmf}[1]{{\mathbf{#1}}}
\newcommand\rmi{{\mathrm{i}}}
\newcommand{\rmd}{\mathrm{d}}
\title[On quasi-Minnaert resonances in elasticity]{On quasi-Minnaert resonances in elasticity and their applications to stress concentrations}
\author{Huaian Diao}
\address{School of Mathematics, Jilin University and Key Laboratory of Symbolic Computation and Knowledge Engineering of Ministry of Education, Changchun, Jilin, China.}
\email{diao@jlu.edu.cn, hadiao@gmail.com}
\author{Ruixiang Tang}
\address{School of Mathematics, Jilin University, Changchun 130012, China.}
\email{tangrx97@gmail.com; tangrx23@mails.jlu.edu.cn}
\author{Hongyu Liu}
\address{Department of Mathematics, City University of Hong Kong, Kowloon, Hong Kong SAR, China.}
\email{hongyu.liuip@gmail.com; hongyliu@cityu.edu.hk}
\date{} 
\begin{document}
	
	\maketitle
	\begin{abstract}

		This paper unveils and investigates a novel quasi-Minnaert resonance for an elastic hard inclusion embedded in a soft homogeneous medium in the sub-wavelength regime. The quasi-Minnaert resonance consists of boundary localization and surface resonance for the generated internal total and external scattered wave fields associated with the hard inclusion. It possesses similar quantitative behaviours as those for the classical Minnaert resonance due to high-contrast material structures, but occurs for a continuous spectrum of frequencies instead of certain discrete Minnaert resonant frequencies. We present a comprehensive analysis to uncover the physical origin and the mechanism of this new physical phenomenon. It is shown that the delicate high-contrast material structures and the properly tailored incident waves which are coupled together in a subtle manner play a crucial role in ensuring such phenomena. The stress concentration phenomena in both the internal total field and the scattered field components are also rigorously established. The analysis in this paper is deeply rooted in layer potential theory and intricate asymptotic analysis. We believe that our findings can have a significant impact on  the theory of composite materials and metamaterials.

	\medskip
		
		\noindent{\bf Keywords:}~~Hard inclusion; Quasi-Minnaert resonances; Neumann-Poincar\'e operator; Boundary localization; Surface resonance; Stress concentration
		
		\medskip
		\noindent{\bf 2020 Mathematics Subject Classification:}~~35P20; 35B34; 74E99; 74J20
	\end{abstract}
	

	\section{Introduction}

	In this paper, we investigate the scattering problem associated with a hard elastic inclusion embedded within a soft elastic material in the sub-wavelength regime. Such physical models are of fundamental importance in the design and construction of metamaterials with negative material parameters, which have garnered significant attention in recent years. Notable advancements in this field include the realization of negative density and refractive index in acoustics \cite{ADM, AMRZ}, negative permittivity and permeability in electromagnetism \cite{ACK, ARY, AR, CKK}, and negative Lam\'e parameters in elasticity \cite{LIUCHAN,LIUZHANG}. These developments have enabled a wide range of industrial applications, including antennas \cite{ETSG}, absorbers \cite{LIVA}, invisibility cloaking \cite{ACK}, super-lenses \cite{FLAZ, PJB}, and super-resolution imaging \cite{ADM}.

	The sub-wavelength regime corresponds to the case where the size of the inclusion is significantly smaller than the operating wavelength of the incident wave. In acoustics, for instance, the immersion of a bubble in a liquid creates a high density contrast between the bubble and the surrounding medium, leading to a distinct low-frequency resonance known as the Minnaert frequency. At this frequency, bubbles act as acoustic resonators \cite{Min33}, playing a pivotal role in the design of novel materials, including phononic crystals. The Minnaert resonance in acoustic settings was first rigorously studied in \cite{AFGL}, where the relationship between Minnaert resonance frequencies and high contrast parameters was established. Analogous developments in elastic scattering problems have been explored in \cite{lizou}. Furthermore, it has been demonstrated in \cite{ACC,DGS} that the Minnaert resonance can be utilized to recover the mass density and bulk modulus by employing bubbles as contrast agents. Corresponding advancements in the study of inverse problems via Minnaert resonance in the time domain can be found in \cite{SS23, SSW24}. Recently, the Minnaert resonance has been rigorously established using resolvent operator theory for both time harmonic and time domain cases in \cite{MPS, LS24, LS241}.

	However, the aforementioned studies primarily focus on characterizing the Minnaert resonance with respect to physical parameters or utilizing it to investigate inverse problems. From physical experiments demonstrated in \cite{LAI,LIUCHAN,LIUZHANG}, it is observed that in the sub-wavelength regime, when a high contrast exists between the inclusion and the background medium, the generated wave field exhibits boundary localization and high oscillations near the inclusion boundary. This behavior presents unique challenges and opportunities for further theoretical and practical exploration.

	This study rigorously demonstrates that when a hard radial elastic inclusion is embedded within a soft elastic material in the sub-wavelength regime, by appropriately choosing the incident wave, both the generated internal elastic total wave and the external scattered wave associated with the inclusion and the incident wave are boundary localized. Furthermore, the gradients of the internal elastic wave and the external scattered wave near the boundary of the inclusion exhibit high oscillations. We refer to this highly oscillatory behavior of the wave gradients as {\it surface resonance.}  The boundary localization and surface resonance of the wave scattering phenomena associated with the hard elastic inclusion and the incident wave are termed {\em quasi-Minnaert resonance.}  We present a comprehensive analysis to uncover the physical origin and the mechanism of this new physical phenomenon. It is shown that the delicate high-contrast material structures and the properly tailored incident waves which are coupled together in a subtle manner play a crucial role in ensuring such phenomena.  The quasi-Minnaert resonance is independent of the operating frequency of the incident wave and occurs under the condition that a hard elastic inclusion is situated in a soft elastic background with a suitable incident wave. Consequently, the quasi-Minnaert resonance frequency forms a type of continuous spectrum, in contrast to the discrete Minnaert resonance frequency described in the literature \cite{AFGL,lizou}. Compared to existing mathematical studies of Minnaert resonance, which focus on the mathematical characterization of Minnaert resonance with respect to the physical parameter ratio between the inclusion and the background, our findings initiate an innovative mathematical analysis for characterizing  quasi-Minnaert resonance of wave scattering for hard inclusions in the sub-wavelength regime.

	In the present work, we unveil the underlying physical mechanism of the quasi-Minnaert resonance, which is intrinsically linked to the incident wave configuration and the high contrast parameter of the Lam\'e constants between the inclusion and the background medium. This high contrast parameter characterizes the ratio between the compressional and shear modulus of the hard elastic inclusion and the soft elastic background medium. In Theorem \ref{thm:internal surface localization for scattering problem}, we demonstrate that, for a fixed high contrast parameter of the Lam\'e constants, a careful selection of the incident wave enables boundary localization of the generated internal and external scattered wave fields associated with the incident wave and the fixed material parameter of the hard inclusion. In Corollary \ref{cor:3.2}, we further show that, for a prescribed boundary localization level, appropriate tuning of the high contrast parameter and the incident field can rigorously induce boundary localization phenomena. Additionally, the intricate relationship between the surface resonance of the generated wave field and the incident wave field is rigorously established in Theorem \ref{thm:nabla u in thm}. Specifically, we prove that surface resonance can occur independently of wave field boundary localization by suitably choosing an incident wave that depends only on the high contrast parameter; see Remark \ref{rem:4.1} for details. Finally, based on our analysis of the quasi-Minnaert resonance, we rigorously characterize the stress concentration phenomena of the interior total field and the exterior scattering field in a neighborhood of the boundary of the hard inclusion, as established in Theorem \ref{thm:Eu definition in thm}. These results are derived from a delicate analysis of both the incident wave properties and the material parameter of the system. It is noteworthy that the Minnaert resonance discovered in \cite{lizou} is primarily related to the high contrast in density between the hard inclusion and the elastic background material. In contrast, the quasi-Minnaert resonance introduced in this work relies on the incident wave and the high contrast between the bulk modulus and shear modulus.

    Our arguments for proving boundary localization and surface resonance rely on the layer potential theory and the spectral theory of the Neumann-Poincar\'e operator. To rigorously establish these phenomena, we employ intricate and subtle asymptotic analysis, which is broadly applicable to other types of scattering problems. Furthermore, when quasi-Minnaert resonance occurs, we demonstrate that both the internal total wave and the external scattered wave exhibit stress concentration properties near the boundary of the inclusion. These findings have potential practical applications, particularly in medical treatments, where controlled stress concentration can be leveraged for therapeutic purposes.

	Finally, the remainder of this paper is organized as follows. Section \ref{sec:mathematical formula} presents the mathematical framework and foundational layer potential theory. Section \ref{sec:concentration of u and u^s} rigorously analyzes the boundary localized behavior of the internal total field and external scattered field near the neighborhood of  the boundary of the inclusion. Section \ref{sec:summary of major findings} investigates surface resonance phenomena associated with the internal total field and external scattered field. Furthermore, we rigorously demonstrate the stress concentration phenomena in both the internal total field and the external scattered field.
	
	\section{Mathematical Setup}\label{sec:mathematical formula}
	In this section, we present the mathematical formulation for our subsequent study. Let $D$ be an elastic inclusion characterized by the material parameter $(\tilde{\lambda}, \tilde{\mu},\tilde \rho )$, where $D$ is a bounded Lipschitz domain in $\mathbb{R}^3$ with a connected  complement $\mathbb R^3 \setminus \overline D$.  The real constant Lam\'e parameters $\tilde{\lambda}$ and $\tilde{\mu}$, along with the constant density $\tilde \rho \in \mathbb{R}_{+}$ describe the elastic inclusion $D$. We assume that $\mathbb{R}^3 \setminus \overline {D}$ is filled with a homogeneous elastic medium described by the material parameter $({\lambda}, {\mu}, \rho )$. Both sets of Lam\'e constants satisfy the following strong convexity conditions \cite{Kupradze}: 	
	\begin{equation}\notag
		\tilde \mu>0, \  3 \tilde\lambda+2\tilde \mu>0, \quad  \mbox{and}\quad 	\mu>0, \  3 \lambda+2 \mu>0 .
	\end{equation}
	Here $\rho \in \mathbb{R}_{+}$ represents the density of the homogeneous background $\mathbb{R}^3 \setminus \overline {D}$. Let $\mathbf{u}^i$ be a  time-harmonic incident elastic wave, which is an entire solution to 
	\begin{equation}\label{eq:incident elastic wave}
		\mathcal{L}_{\lambda, \mu} \mathbf{u}^i(\mathbf{x})+\omega^2 \rho \mathbf{u}^i(\mathbf{x})=0 \quad \mbox{in} \quad \mathbb R^3,
	\end{equation}
	where $\omega>0$ is the angular frequency, and the Lam\'e operator $\mathcal{L}_{\lambda, \mu}$ associated with the Lam\'e  parameters $(\lambda, \mu)$ is defined by
	\begin{equation}
		\mathcal{L}_{\lambda, \mu} :=\mu \triangle +(\lambda+\mu) \nabla \nabla \cdot \notag .
	\end{equation}
	The interaction between $\mathbf{u}^i$ and the elastic inclusion $D$ generates the scattered  elastic field $\mathbf{u}^s$ resulting in the total field
	$$
	\mathbf{u}=\mathbf{u}^i+\mathbf{u}^s.
	$$
	The aforementioned elastic scattering problem can be described  by the following system \cite{lizou}:
	\begin{equation}\label{eq:system}
		\begin{cases}\mathcal{L}_{\tilde{\lambda}, \tilde{\mu}} \mathbf{u}(\mathbf{x})+\omega^2 \tilde{\rho} \mathbf{u}(\mathbf{x})=\mathbf{0}, & \mathbf{x} \in D, \\ \mathcal{L}_{\lambda, \mu} \mathbf{u}(\mathbf{x})+\omega^2 \rho \mathbf{u}(\mathbf{x})=\mathbf{0}, & \mathbf{x} \in \mathbb{R}^3 \setminus \overline{D}, \\ \left.\mathbf{u}(\mathbf{x})\right|_{-}=\left.\mathbf{u}(\mathbf{x})\right|_{+}, & \mathbf{x} \in \partial D, \\ \left.\partial_{{\nu},\tilde \lambda, \tilde\mu } \mathbf{u}(\mathbf{x})\right|_{-}=\left.\partial_{\nu,\lambda,\mu }\mathbf{u}(\mathbf{x})\right|_{+}, \quad & \mathbf{x} \in \partial D, \\   \mbox{$\mathbf{u}^s$ satisfies the radiation condition, } & 
		\end{cases}
	\end{equation}
	where the subscripts $\pm$ denote the limits from outside and inside of $D$, respectively. The co-normal derivative $\partial_\nu$  associated with the parameters $(\lambda, \mu)$ appearing in the fourth equation of \eqref{eq:system}  is defined by
	\begin{equation}
		\partial_\nu \mathbf{u}=\lambda(\nabla \cdot \mathbf{u}) \boldsymbol{\nu}+2 \mu\left(\nabla^s \mathbf{u}\right) \boldsymbol{\nu},\notag
	\end{equation}
	with $\nu$ being the outward unit normal to $\partial D$ and  the symmetric gradient operator $\nabla^s$ given by
	\begin{equation}
		\nabla^s \mathbf{u}:=\frac{1}{2}\left(\nabla \mathbf{u}+\nabla \mathbf{u}^{\top}\right).\notag
	\end{equation}
	Here $\nabla \mathbf{u}=\left(\partial_j u_i\right)_{i, j=1}^3$ denotes the gradient of $\mathbf u$ at $\mathbf x$ and the superscript $\top$ denotes the matrix transpose. Using Helmholtz decomposition, the scattered wave field \(\mathbf{u}^s\) can be expressed as a superposition of  compressional and shear waves
	\[
	\mathbf{u}^s = \mathbf{u}_p^s + \mathbf{u}_s^s,
	\]
	where the compressional wave component \(\mathbf{u}_p^s\) is given by
	\[
	\mathbf{u}_{p}^{s} = -\frac{1}{k_{p}^{2}} \nabla (\nabla \cdot \mathbf{u}^{s}),
	\]
	and the shear wave component \(\mathbf{u}_s^s\) is defined as
	\[
	\mathbf{u}_{s}^{s} = \frac{1}{k_{s}^{2}} \nabla \times \nabla \times \mathbf{u}^{s}.
	\]
	In this formulation, \(k_p\) and \(k_s\) are the wave numbers for the compressional and shear waves, respectively, defined as
	\begin{equation}\label{eq:ks kp defination}
		k_p = \frac{\omega}{c_p}, \quad k_s = \frac{\omega}{c_s},
	\end{equation}
	with 
	\begin{equation}\label{eq:cs cp defination}
		c_s=\sqrt{\frac{\mu}{\rho} }, \quad \mbox{and} \quad c_p=\sqrt{\frac{\lambda+2 \mu}{\rho} },
	\end{equation} 
	representing the compressional and shear wave speeds in the medium.
	The radiation condition in \eqref{eq:system} is characterized by the following equations \cite{Kupradze}
	$$
	\begin{array}{r}
		\left(\nabla \times \nabla \times \mathbf{u}^s\right)(\mathbf{x}) \times \frac{\mathbf{x}}{|\mathbf{x}|}- \mathrm{i} k_s \nabla \times \mathbf{u}^s(\mathbf{x})=\mathcal{O}\left(|\mathbf{x}|^{-2}\right), \\
		\frac{\mathbf{x}}{|\mathbf{x}|} \cdot\left[\nabla\left(\nabla \cdot \mathbf{u}^s\right)\right](\mathbf{x})-\mathrm{i} k_p \nabla \mathbf{u}^s(\mathbf{x})=\mathcal{O}\left(|\mathbf{x}|^{-2}\right),
	\end{array}
	$$
	as $|\mathbf{x}| \rightarrow+\infty$, where $\rmi$ is the imaginary unit.	It is pointed out that the corresponding physical parameters \(\tilde{k}_s, \tilde{k}_p, \tilde{c}_s, \tilde{c}_p\) associated with the elastic inclusion \(D\) can be defined in a similar way as in \eqref{eq:ks kp defination} by substituting \((\lambda, \mu, \rho)\) with \((\tilde{\lambda}, \tilde{\mu}, \tilde{\rho})\).

	This paper focuses primarily on a configuration in which the hard inclusion $D$ is embedded within a soft elastic material, exemplified by scenarios such as lead inclusions coated with silicone rubber\cite{LIUZHANG}. In this configuration, the corresponding physical parameters in different inclusions follow the subsequent relationship:
	\begin{equation}\label{eq:lambda mu rho}
		\tilde{\lambda}=\frac{1}{\delta} \lambda, \quad \tilde{\mu}=\frac{1}{\delta} \mu, \quad \tilde{\rho}=\frac{1}{\epsilon} \rho,
	\end{equation}
	where $\delta \ll 1$ and $\epsilon \ll 1$ are given positive constants. In fact, $\delta$ describes the contrast between Lam\'e constants of two different media, whereas $\epsilon$ characterizes the contrast of two different densities. 
	Subsequently, we introduce another parameter $\tau$ that describes the contrast in terms of wave velocities among different media, namely
	\begin{equation}\label{eq:tau defination}
		\tau=\frac{c_s}{\tilde{c}_s}=\frac{c_p}{\tilde{c}_p}=\sqrt{\delta / \epsilon}.
	\end{equation}
	Furthermore, we assume that the contrast $\tau$ satisfies
	\begin{equation}\label{eq:tau satisfies}
		\tau=\sqrt{\delta / \epsilon} < 1.
	\end{equation}
	The assumption in \eqref{eq:tau satisfies} aligns with the physical setup, where the wave speed within the hard inclusion $D$ is higher than that in the surrounding soft material. In this study, we explore the sub-wavelength scenario, i.e.,
	\begin{equation}\label{eq:ass sub-wave}
		\omega \cdot {\rm diam} (\Omega) \ll 1.
	\end{equation}
	Indeed, the assumption \eqref{eq:ass sub-wave} indicates that the diameter of the inclusion $D$ is smaller than the operating wavelength of the incident wave. Throughout the paper, we assume that the material parameter $(\lambda,\mu,\rho)$ describing the homogeneous elastic medium satisfies
	\begin{equation}\label{eq:lambda mu rho O1}
		\lambda=\mathcal{O}(1),\quad \mu=\mathcal{O}(1),\quad \rho=\mathcal{O}(1).
	\end{equation}
	By applying a suitable coordinate  transformation, we assume that the diameter of the inclusion $D$ is of order 1. Hence, in view of \eqref{eq:ass sub-wave}, we know that	
	\begin{equation}\label{eq:assp omega}
		\omega=o(1), 
	\end{equation}
	which implies that the compressional and shear wave numbers satisfy 
	\begin{align}\label{eq:kp small}
		k_s=o(1)\quad \mbox{and}\quad k_p=o(1). 
	\end{align}
	Furthermore, according to the relationship in \eqref{eq:tau satisfies}, it follows that 
	\begin{align}\label{eq:kp in small}
		\tilde{k}_s=o(1)\quad \mbox{and}\quad \tilde{k}_p=o(1),
	\end{align}
	with
	\begin{equation}\label{eq:tilde ks relationship with ks}
		\tilde{k}_s=\tau k_s, \quad \tilde{k}_p=\tau k_p.
	\end{equation}

	This paper primarily employs potential theory to analyze the system \eqref{eq:system}. We begin by introducing the potential theory relevant to the Lam\'e system. The fundamental solution $\mathbf{\Gamma}^\omega = \left(\Gamma_{i, j}^\omega\right)_{i, j=1}^3$ for the operator $\mathcal{L}_{\lambda, \mu} + \omega^2\rho$ in three dimensions is given in \cite{ABG} as
	\begin{equation}
		\left(\Gamma_{i, j}^\omega\right)_{i, j=1}^3(\mathbf{x}) = -\frac{\delta_{ij}}{4 \pi \mu |\mathbf{x}|} e^{\mathrm{i} k_s |\mathbf{x}|} + \frac{1}{4 \pi \omega^2 \rho} \partial_i \partial_j \frac{e^{\mathrm{i} k_p |\mathbf{x}|} - e^{\mathrm{i} k_s |\mathbf{x}|}}{|\mathbf{x}|},\notag
	\end{equation}
	where $\delta_{ij}$ is the Kronecker delta function. Here, $k_\alpha$, for $\alpha = p, s$, is defined by \eqref{eq:ks kp defination}. It can be shown that $\Gamma_{i j}^\omega$ has the following series representation (cf. \cite{ABG}):
	\begin{align}
		\Gamma_{i j}^\omega(\mathbf{x}) = & -\frac{1}{4 \pi \rho} \sum_{n=0}^{+\infty} \frac{\mathrm{i}^n}{(n+2) n!}\left(\frac{n+1}{c_s^{n+2}} + \frac{1}{c_p^{n+2}}\right) \omega^n \delta_{ij} |\mathbf{x}|^{n-1} \notag\\
		& + \frac{1}{4 \pi \rho} \sum_{n=0}^{+\infty} \frac{\mathrm{i}^n (n-1)}{(n+2) n!} \left(\frac{1}{c_s^{n+2}} - \frac{1}{c_p^{n+2}}\right) \omega^n |\mathbf{x}|^{n-3} x_i x_j.\notag
	\end{align}
	In particular, when $\omega = 0$, we denote $\mathbf{\Gamma}^0$ as $\mathbf{\Gamma}$ for simplicity, and its expression (cf. \cite{ABG}) is given by
	\begin{equation}\notag
		\Gamma_{i, j}^0(\mathbf{x}) = -\frac{1}{8 \pi} \left(\frac{1}{\mu} + \frac{1}{\lambda + 2\mu}\right) \frac{\delta_{ij}}{|\mathbf{x}|} - \frac{1}{8 \pi} \left(\frac{1}{\mu} - \frac{1}{\lambda + 2\mu}\right) \frac{x_i x_j}{|\mathbf{x}|^3}.
	\end{equation}
	The single-layer potential associated with the fundamental solution $\mathbf{\Gamma}^\omega$ is defined as
	\begin{equation}\label{eq:single layer potential}
		\mathcal{S}_{\partial D}^\omega[\boldsymbol{\varphi}](\mathbf{x}) = \int_{\partial D} \mathbf{\Gamma}^\omega(\mathbf{x} - \mathbf{y}) \boldsymbol{\varphi}(\mathbf{y}) \, \mathrm{d}s(\mathbf{y}), \quad \mathbf{x} \in \mathbb{R}^3,
	\end{equation}
	for $\boldsymbol{\varphi} \in L^2(\partial D)^3$. On $\partial D$, the co-normal derivative of the single-layer potential satisfies the following jump relation:
	\begin{equation}\label{eq:jump relation}
		\left.\partial_\nu \mathcal{S}_{\partial D}^\omega[\boldsymbol{\varphi}]\right|_{\pm}(\mathbf{x}) = \left( \pm \frac{1}{2} \mathcal{I} + \mathcal{K}_{\partial D}^{\omega, *} \right)[\boldsymbol{\varphi}](\mathbf{x}), \quad \mathbf{x} \in \partial D,
	\end{equation}
	where
	\begin{equation}\label{eq:np operator}
		\mathcal{K}_{\partial D}^{\omega, *}[\boldsymbol{\varphi}](\mathbf{x}) = \text{p.v.} \int_{\partial D} \partial_{\nu_{\mathbf{x}}} \mathbf{\Gamma}^\omega(\mathbf{x} - \mathbf{y}) \boldsymbol{\varphi}(\mathbf{y}) \, \mathrm{d}s(\mathbf{y}).
	\end{equation}
	Here, p.v. refers to the Cauchy principal value. Notably, the operator $\mathcal{K}_{\partial D}^{\omega, *}$, defined in equation \eqref{eq:np operator}, is known as the Neumann-Poincar\'e (N-P) operator. This operator plays a significant role in the study of metamaterials within elasticity theory.

	With the aid of the potential theory described earlier, the solution to the system \eqref{eq:system} can be expressed as
	\begin{equation}\label{eq:total field}
		\mathbf{u} = 
		\begin{cases}
			\tilde{\mathcal{S}}_{\partial D}^\omega[	\boldsymbol{\varphi_1}](\mathbf{x}), & \mathbf{x} \in D, \\
			\mathcal{S}_{\partial D}^\omega[	\boldsymbol{\varphi_2}](\mathbf{x}) + \mathbf{u}^i, & \mathbf{x} \in \mathbb{R}^3 \setminus \overline{D},
		\end{cases}
	\end{equation}
	where the density functions $	\boldsymbol{\varphi_1},\ 	\boldsymbol{\varphi_2} \in L^2(\partial D)^3$ are determined by the transmission conditions across $\partial D$ in \eqref{eq:system}. Here, the operator $\tilde{\mathcal{S}}_{\partial D}^\omega$ denotes the single-layer potential given by \eqref{eq:single layer potential} associated with the parameters $(\tilde{\lambda}, \tilde{\mu})$ and $\tilde{\rho}$. Based on the transmission conditions in \eqref{eq:system} and the jump relation \eqref{eq:jump relation}, it is straightforward to derive the equivalent integral equations for \eqref{eq:system}, given by
	\begin{equation}\label{eq:scattering problem equation}
		\mathcal{A}(\omega, \delta)[\bmf{\Phi}](\mathbf{x}) = \bmf{F}(\mathbf{x}), \quad \mathbf{x} \in \partial D,
	\end{equation}
	where
	\begin{equation}
		\mathcal{A}(\omega, \delta) = \begin{pmatrix}
			\tilde{\mathcal{S}}_{\partial D}^\omega & -\mathcal{S}_{\partial D}^\omega \notag\\
			-\frac{I}{2} + \tilde{\mathcal{K}}_{\partial D}^{\omega, *} & -\frac{I}{2} - \mathcal{K}_{\partial D}^{\omega, *}
		\end{pmatrix}, \quad 
		\bmf{\Phi} = \begin{pmatrix}
			\boldsymbol{\varphi_1} \\
			\boldsymbol{\varphi_2}
		\end{pmatrix}, \quad 
		\bmf{F} = \begin{pmatrix}
			\mathbf{u}^i \\
			\partial_{\boldsymbol{\nu}} \mathbf{u}^i
		\end{pmatrix}.
	\end{equation}
	Here the N-P operator $\tilde{\mathcal{K}}_{\partial D}^{\omega, *}$ is defined by \eqref{eq:np operator} associated with the parameters $(\tilde{\lambda}, \tilde{\mu})$ and $\tilde{\rho}$.   For the subsequent analysis, we define the spaces $\mathcal{H} = L^2(\partial D)^3 \times L^2(\partial D)^3$ and $\mathcal{H}^1 = H^1(\partial D)^3 \times L^2(\partial D)^3$. The operator $\mathcal{A}(\omega, \delta)$ maps from $\mathcal{H}$ to $\mathcal{H}^1$. 
	
	In this paper we shall investigate the boundary localization and surface resonance for the elastic scattering \eqref{eq:system}. In the following we introduce the corresponding definitions.

	\begin{defn}\label{def:surface localized}
     Let $D$ denote a bounded Lipschitz domain with a connected complement, and consider the internal total field $\mathbf{u}|_{D}$ along with the external scattered field $\mathbf{u}^s|_{\mathbb{R}^3 \setminus \overline{D}}$ associated with the scattering problem \eqref{eq:system} and an incident wave $\mathbf{u}^i$. Let $B_R$ denote a ball of radius $R$ centered at the origin in $\mathbb{R}^3$, satisfying $D \subseteq B_R$. For a fixed, sufficiently small $\xi_1, \xi_2 \in \mathbb{R}_{+}$, we define the interior and exterior neighborhoods relative to the boundary of $D$ as follows:  
		\begin{align}\label{eq:Mdef}
			\mathcal{M}_{-}^{\xi_1}(\partial D) := \{ \mathbf{x} \in D \,\big|\, \mathrm{d}_{\partial D}(\mathbf{x}) < \xi_1 \},
			\quad 
			\mathcal{M}_{+}^{\xi_2}(\partial D) := \{ \mathbf{x} \in  B_R \setminus \overline D \,\big|\, \mathrm{d}_{\partial D}(\mathbf{x}) < \xi_2 \},
		\end{align}   
		where $\mathrm{d}_{\partial D}(\mathbf{x}) := \inf\limits_{\mathbf{y} \in \partial D} \|\mathbf{x} - \mathbf{y}\|$ denotes the Euclidean distance to $\partial D$. The internal total field $\mathbf{u}|_{D}$ is termed  {\it interior boundary localized}, while the external scattered field  $\mathbf{u}^s|_{\mathbb{R}^3 \setminus \overline{{D}}}$ is referred to as  {\it exterior boundary localized}, provided there exist sufficiently small parameters $\xi_1, \xi_2, \varepsilon \in \mathbb{R}_{+}$ corresponding to each case such that
		\begin{align}\label{eq:220}
			\frac{\|\mathbf{u}\|_{L^{2}(D \setminus \mathcal{M}_{-}^{\xi_1}(\partial D))^{3}}}{\|\mathbf{u}\|_{L^{2}(D)^{3}}} \leqslant \varepsilon,
			\quad
			\frac{\|\mathbf{u}^s\|_{L^{2}((B_R \setminus \overline D) \setminus \mathcal{M}_{+}^{\xi_2}(\partial D))^{3}}}{\|\mathbf{u}^s\|_{L^{2}(B_R \setminus \overline D)^{3}}} \leqslant \varepsilon.
		\end{align}
		Here the quantity $\varepsilon $ characterizes the level of the boundary localization.  
	\end{defn}

	\begin{defn}\label{def:surface resonant}
		Consider the internal total field $\mathbf{u}|_{D}$ and the external scattered field $\mathbf{u}^s|_{\mathbb{R}^3 \setminus \overline{{D}}}$ corresponding to the scattering problem \eqref{eq:system} associated with  incident wave $\mathbf{u}^i$, where $D$ represents an inclusion with a bounded Lipschitz boundary. We recall that $\mathcal{M}_{-}^{\xi_1}(\partial D) $ and $\mathcal{M}_{+}^{\xi_2}(\partial D)$ are defined in \eqref{eq:Mdef}. If 
		\begin{align}\notag
			\frac{\|\nabla \mathbf{u} \|_{L^2(\mathcal{M}_{-}^{\xi_1}(\partial D) )^3}}{\|\mathbf{u}^i\|_{L^2(D)^3} } \gg 1, 
			\qquad \mbox{and} \qquad
			\frac{\|\nabla \mathbf{u}^s \|_{L^2(\mathcal{M}_{+}^{\xi_2}(\partial D))^3} }{\|\mathbf{u}^i\|_{L^2(D)^3} } \gg 1,
		\end{align}
		hold, then we say  that  $\mathbf{u}|_{D}$ and $\mathbf{u}^s|_{\mathbb{R}^3 \setminus \overline{{D}}}$ are surface resonant.
		
	\end{defn}

\begin{rem}

The left-hand sides of the two inequalities in \eqref{eq:220} represent the ratios between the $L^2$ norm of the wave field slightly away from $\partial D$ and the corresponding $L^2$ norm of the wave field in $D$. These ratios are shown to be small in terms of the boundary localization level parameter $\varepsilon$, introduced in Definition \ref{def:surface localized}, which is assumed to be sufficiently small. Notably, the denominators and numerators of these terms can be divided by a common factor---the $L^2$ norm of the incident wave $\mathbf{u}^i$ in $D$---without altering the inequalities. This normalization step is justified by the linearity of the scattering problem \eqref{eq:system}, which involves linear partial differential equations (PDEs). In our subsequent analysis, when both boundary localization and surface resonance occur for the internal elastic wave $\mathbf{u}|_D$ and the scattered wave field $\mathbf{u}^s|_{B_R\backslash D}$ corresponding to \eqref{eq:system} with the incident wave $\mathbf{u}^i$ and the hard inclusion $D$, we refer to $D$ as a \textit{quasi-Minnaert resonator} in Definition \ref{def:quasi minnaert}. Consequently, $D$ serves as an operating inclusion in our study, particularly in the sub-wavelength regime where the size of $D$ is much smaller than the impinging wavelength. Since $D$ is small, the $L^2$ norm of $\mathbf{u}^i$ within $D$ is also small. Furthermore, both boundary localization and surface resonance are localized around $D$, which further justifies the necessity of normalizing the $L^2$ norm of $\mathbf{u}^i$ within $D$.

\end{rem}

	Building on Definitions~\ref{def:surface localized} and~\ref{def:surface resonant}, we propose the definition of the quasi-Minnaert resonance.

	\begin{defn}\label{def:quasi minnaert}
    
   	Consider the elastic scattering problem described by system \eqref{eq:system} for a bounded Lipschitz domain \( D \), impinged by an incident wave \( \mathbf{u}^i \) at the operating frequency \( \omega \). If the generated internal total field \( \mathbf{u}|_{D} \) and the external scattered field \( \mathbf{u}^s|_{\mathbb{R}^3 \setminus \overline{D}} \) of \eqref{eq:system} satisfy both boundary localization and surface resonance in the sense of Definitions~\ref{def:surface localized} and~\ref{def:surface resonant}, respectively, then the frequency \( \omega \) is referred to as the \textit{quasi-Minnaert resonance frequency} associated with the incident wave \( \mathbf{u}^i \). In this context, the inclusion \( D \) is termed the \textit{quasi-Minnaert resonator} corresponding to the incident wave $ \mathbf{u}^i $.
   \end{defn}

	\begin{rem}
		We emphasize that the quasi-Minnaert resonance introduced in Definition~\ref{def:quasi minnaert} depends on the incident wave \( \mathbf{u}^i \) and the material parameter $D$. Consequently, this type of resonance is modulated by the incident wave \( \mathbf{u}^i \) and physical parameters characterizing the hard elastic inclusion $D$, a relationship that will be further clarified in Theorems~\ref{thm:internal surface localization for scattering problem} and~\ref{thm:nabla u in thm}.  In the sub-wavelength regime, under the physical setup of a hard elastic inclusion embedded in a soft elastic background, we rigorously prove that by choosing an appropriate incident wave \( \mathbf{u}^i \) as described by \eqref{eq:incident wave in T_n^m}, both the corresponding internal wave field and the exterior scattered wave are boundary localized. Furthermore, surface resonance is established in Theorem~\ref{thm:nabla u in thm} when the incident wave \( \mathbf{u}^i \) takes the form given by \eqref{eq:u^i 4.1}.  In contrast to existing literature \cite{AFGL,lizou} on discrete Minnaert resonance frequencies, which are characterized by the high contrast ratio of the material parameter and are independent of the incident wave \( \mathbf{u}^i \), the newly proposed quasi-Minnaert resonance frequencies form a continuous set in the sub-wavelength regime. The quasi-Minnaert resonance is closely related to explaining physical phenomena \cite{LAI,LIUCHAN,LIUZHANG} associated with boundary localization and high surface oscillations of the generated wave field.

	\end{rem}

	\section{Boundary localization  of the interior total field and exterior scattered wave field}\label{sec:concentration of u and u^s}

	In this section, we introduce some spectral properties of the single-layer potential and the N-P operator corresponding to a unit ball in $\mathbb{R}^3$. Utilizing these spectral properties, we select a suitable incident wave $\mathbf{u}^i$ given by \eqref{eq:incident wave in T_n^m}, which satisfies \eqref{eq:incident elastic wave}. This ensures that the corresponding internal total field $\mathbf{u}$ and external scattered field $\mathbf{u}^s$ of \eqref{eq:system}, associated with $\mathbf{u}^i$, exhibit boundary localization within the interior and exterior of the inclusion $D$, respectively, as stated in Theorem \ref{thm:internal surface localization for scattering problem}.

	In the following discussion, we introduce relevant notation and formulas. Let $\mathbb{N}_+$ denote the set of positive integers, and define $\mathbb{N}_0 = \mathbb{N}_+ \cup \{0\}$. The spherical harmonic functions $Y_n^m\left(\theta, \varphi\right)$ (cf.\cite{CK}) are given by
	\begin{align}\label{eq:ynm and cnm def}
		Y_n^m\left(\theta, \varphi\right) := C_n^m P_n^{|m|}(\cos\theta) e^{\rmi m \varphi},\quad  C_n^m = \sqrt{\frac{2n+1}{4\pi} \frac{(n-|m|)!}{(n+|m|)!}} ,
	\end{align}
	where $n \in \mathbb{N}_0$ and $-n \leqslant m \leqslant n$. Denote $j_n(z)$ and $h_n(z)$ as the spherical Bessel and Hankel functions of the first kind of order $n$, respectively.  For any fixed $n \in \mathbb{N}_0$, if $0 < |z| \ll 1$, the following expansions hold (cf. \cite[(2.32)]{CK}):
	\begin{subequations}
		\begin{align}
			j_n(z) &= \frac{z^n}{(2n+1)!!} \left(1 - \frac{z^2}{2(2n+3)} + \frac{z^4}{8(2n+3)(2n+5)} + \mathcal{O}(z^5)\right), \label{eq:jn expansion}\\
			h_0(z) &= \frac{1}{\mathrm{i} z}\left(1 + \mathrm{i} z - \frac{z^2}{2} - \frac{\mathrm{i} z^3}{6} + \frac{z^4}{24} + \mathcal{O}(z^5)\right), \notag\\
			h_1(z) &= \frac{1}{\mathrm{i} z^2}\left(1 + \frac{z^2}{2} + \frac{\mathrm{i} z^3}{3} - \frac{z^4}{8} + \mathcal{O}(z^5)\right), \notag\\
			h_n(z) &= \frac{(2n-1)!!}{\mathrm{i} z^{n+1}} \left(1 - \frac{z^2}{2(2n-1)} + \frac{z^4}{8(2n-1)(2n-3)} + \mathcal{O}(z^5)\right), \quad n \geqslant 2. \label{eq:hn expansion}
		\end{align}
	\end{subequations}
	From the series representations of $h_n(z)$, the following recurrence relations hold (cf. \cite{NIST}):
	\begin{align}\label{eq:hn qiudao}
		h_n^{\prime}(z) &= h_{n-1}(z) - \frac{(n+1)}{z} h_n(z), \quad n=1, 2, 3, \dots, \notag \\
		h_n^{\prime}(z) &= -h_{n+1}(z) + \frac{n}{z} h_n(z), \quad n=0, 1, 2, \dots,\\
		h_{n+1}(z)&=\frac{2n+1}{z}h_n(z)-h_{n-1}(z), \quad n=1, 2, 3, \dots .\notag
	\end{align}
	These differentiation formulas are also applicable to $j_n(z)$.

	Lemma \ref{lem:spectral NP} states some spectral properties of the single-layer potential and the N-P operator provided that the elastic inclusion $D$ is a ball associated with the material parameter $(\tilde{\lambda}, \tilde{\mu},\tilde \rho )$ and $\mathbb R^3 \setminus \overline D$ is a homogeneous elastic medium associated with the material parameter $({\lambda}, {\mu}, \rho )$, where 	Lemma \ref{lem:spectral NP}  can be proved by modifying the proofs of \cite[Theorem 3.1-3.2]{DLL2020}. Hence the detailed proof is omitted. 
	\begin{lem}\label{lem:spectral NP}
		The set $\left(\mathcal{I}_n^m, \mathcal{T}_n^m, \mathcal{N}_n^m\right)$, denoting the vectorial spherical harmonics of order $n$, constitutes an orthogonal basis of $L^2(\mathbb{S}^2)^3$, where 
		\begin{align}\label{eq:Tnm definition}
			\mathcal{I}_n^m (\theta, \varphi) &=\nabla_{\mathbb{S}^2} Y_{n+1}^m(\theta, \varphi)+(n+1) Y_{n+1}^m(\theta, \varphi) \nu, \quad n \geqslant 0, n+1 \geqslant m \geqslant-(n+1), \notag\\
			\mathcal{T}_n^m (\theta, \varphi) &=\nabla_{\mathbb{S}^2} Y_n^m(\theta, \varphi) \wedge \nu, \quad n \geqslant 1, n \geqslant m \geqslant-n, \\
			\mathcal{N}_n^m (\theta, \varphi) &=-\nabla_{\mathbb{S}^2} Y_{n-1}^m(\theta, \varphi)+n Y_{n-1}^m(\theta, \varphi) \nu, \quad n \geqslant 1, n+1 \geqslant m \geqslant-(n+1).\notag 
		\end{align}
		Denote $D_R$ by the  ball centered at the origin with radius $R \in \mathbb{R}_{+}$ in $\mathbb{R}^3$. 
		We have the following spectral property of the single-layer potential operator ${\mathcal{S}}_{\partial D_R}^{\omega}$ defined in \eqref{eq:single layer potential}:
		\begin{equation}
			\mathcal{S}_{\partial D_R}^{\omega}\left[\mathcal{T}_n^m\right](\mathbf{x})=-\frac{\mathrm{i} {k}_s R^2 j_{n}(k_s R) h_{n}(k_s R)}{\mu}\mathcal{T}_n^m, \quad \mathbf{x} \in \partial D_R.\notag
		\end{equation}
		Moreover, the following two identities hold
		\begin{equation}\label{eq:S partial outside ball}
			{\mathcal{S}}_{\partial D_R}^{\omega}\left[\mathcal{T}_n^m\right](\mathbf{x})=-\frac{\mathrm{i} {k}_s R^2 j_{n}(k_s R) h_n\left({k}_s|\mathbf{x}|\right)}{\mu} \mathcal{T}_n^m, \quad\mathbf{x} \in \mathbb{R}^3 \setminus D_R,
		\end{equation}
		and 
		\begin{equation}\label{eq:S Tnm in ball}
			\tilde{\mathcal{S}}_{\partial D_R}^{\omega}\left[\mathcal{T}_n^m\right](\mathbf{x})=-\frac{\mathrm{i} \tilde{k}_s R^2 h_{n} (\tilde{k}_{ s} R)j_n\left(\tilde{k}_s|\mathbf{x}|\right)}{\tilde\mu} \mathcal{T}_n^m, \quad \mathbf{x} \in D_R,
		\end{equation}	
		where ${k_s}$, ${k_p}$ defined in \eqref{eq:ks kp defination} and $\tilde{k}_s$, $\tilde{k}_p$ defined in \eqref{eq:tilde ks relationship with ks}. The spectral system of the N-P operator ${\mathcal{K}}_{\partial D}^{\omega,*}$ is given as follows
		\begin{equation}\label{eq:K* in Tnm}
			\left({\mathcal{K}}_{\partial D}^{\omega,*}\right)\left[\mathcal{T}_n^m\right]=\lambda_{1, n} \mathcal{T}_n^m, 
		\end{equation}
		where
		\begin{equation}\notag 
			\lambda_{1, n}=-\rmi  {k}_s R j_{n}(k_s R)\left({k}_s R h^{\prime}_{n}(k_s R)-h_{n}(k_s R)\right)-\frac{1}{2}.
		\end{equation}
	\end{lem}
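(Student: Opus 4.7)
The plan is to reduce the elastic spectral computation on $\mathcal{T}_n^m$ to the scalar Helmholtz case for the shear wavenumber $k_s$, by exploiting the fact that $\mathcal{T}_n^m$ is both tangential to $\mathbb{S}^2$ and surface-divergence-free. First, I would verify the orthogonal basis claim: direct inner-product computations using $\int_{\mathbb{S}^2}Y_n^m \overline{Y_{n'}^{m'}}\,\mathrm{d}\sigma = \delta_{nn'}\delta_{mm'}$ together with $(\nabla_{\mathbb{S}^2}Y_n^m\wedge\nu)\cdot\nu=0$ and $(\nabla_{\mathbb{S}^2}Y_n^m\wedge\nu)\cdot\nabla_{\mathbb{S}^2}Y_k^l=0$ (after an integration by parts using $\nabla_{\mathbb{S}^2}\cdot(\nabla_{\mathbb{S}^2}Y_n^m\wedge\nu)=0$) give orthogonality of $\mathcal{I}_n^m,\mathcal{T}_n^m,\mathcal{N}_n^m$; completeness follows from a dimension count against the standard decomposition of tangential-plus-normal fields on $\mathbb{S}^2$.

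Next, I would split the fundamental solution via the Helmholtz decomposition
\[
\mathbf{\Gamma}^\omega(\mathbf{x}-\mathbf{y}) = -\frac{1}{\mu}G_s(\mathbf{x}-\mathbf{y})\mathbf{I} - \frac{1}{\omega^2\rho}\nabla_{\mathbf{x}}\nabla_{\mathbf{x}}^{\top}\bigl(G_s(\mathbf{x}-\mathbf{y})-G_p(\mathbf{x}-\mathbf{y})\bigr),
\]
with $G_\alpha(\mathbf{z})=e^{\mathrm{i}k_\alpha|\mathbf{z}|}/(4\pi|\mathbf{z}|)$. Since $\mathcal{T}_n^m$ is tangential and $\nabla_{\mathbb{S}^2}\cdot\mathcal{T}_n^m=0$, integration by parts on $\partial D_R$ shows that the gradient-gradient term applied to $\mathcal{T}_n^m$ vanishes. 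Only the scalar shear kernel remains, so $\mathcal{S}_{\partial D_R}^\omega[\mathcal{T}_n^m](\mathbf{x}) = -\mu^{-1}\int_{\partial D_R} G_s(\mathbf{x}-\mathbf{y})\mathcal{T}_n^m(\mathbf{y})\,\mathrm{d}s(\mathbf{y})$. Now apply the Funk–Hecke/addition formula
\[
G_s(\mathbf{x}-\mathbf{y})=\mathrm{i}k_s\sum_{n,m}j_n(k_s r_<)h_n(k_s r_>)Y_n^m(\hat{\mathbf{x}})\overline{Y_n^m(\hat{\mathbf{y}})},
\]
and use orthonormality of $\{Y_n^m\}$ on $\mathbb{S}^2$ after factoring $\mathcal{T}_n^m$ through its defining scalar $Y_n^m$. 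For $\mathbf{x}\in\partial D_R$, $r_<=r_>=R$ recovers the on-boundary identity; for $\mathbf{x}\in\mathbb{R}^3\setminus D_R$, $r_<=R$, $r_>=|\mathbf{x}|$ yields \eqref{eq:S partial outside ball}; the interior identity \eqref{eq:S Tnm in ball} for $\tilde{\mathcal{S}}_{\partial D_R}^\omega$ uses $r_<=|\mathbf{x}|$, $r_>=R$ with $\tilde{k}_s,\tilde\mu$. The prefactor $-\mathrm{i}k_s R^2/\mu$ absorbs $\mathrm{i}k_s$ from the expansion and the Jacobian $R^2$ from $\partial D_R$.

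For the N-P eigenvalue \eqref{eq:K* in Tnm}, I would take the exterior co-normal derivative of \eqref{eq:S partial outside ball}. On $\partial D_R$ the normal direction is radial, and since $\mathcal{T}_n^m$ is tangential and surface-divergence-free, the divergence term $\lambda(\nabla\cdot\mathbf{u})\nu$ in $\partial_\nu$ drops out and the symmetric gradient reduces to a pure radial derivative. Thus the radial derivative at $|\mathbf{x}|=R^+$ produces $-\mathrm{i}k_s R^2 j_n(k_s R)\,k_s h_n'(k_s R)/\mu$ multiplied by $\mu$ (from $2\mu\nabla^s$), giving $-\mathrm{i}k_s R\, j_n(k_s R)(k_s R h_n'(k_s R))\mathcal{T}_n^m$ after the surface-gradient correction $-h_n(k_sR)/R$ coming from the tangential part of the symmetric gradient on a sphere; combined with the jump relation \eqref{eq:jump relation}, this yields $(\tfrac12\mathcal{I}+\mathcal{K}_{\partial D}^{\omega,*})[\mathcal{T}_n^m]=-\mathrm{i}k_s R j_n(k_sR)(k_sR h_n'(k_sR)-h_n(k_sR))\mathcal{T}_n^m$, and the stated $\lambda_{1,n}$ follows.

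The main obstacle is the rigorous cancellation of the $\omega^{-2}\nabla\nabla^{\top}$ piece of $\mathbf{\Gamma}^\omega$ against $\mathcal{T}_n^m$: naively this term is singular as $\omega\to 0$ and only integration by parts on $\partial D_R$ using $\nabla_{\mathbb{S}^2}\cdot\mathcal{T}_n^m=0$ removes it. A secondary technical point is computing the correct coefficient in the co-normal derivative on the sphere, where the symmetric gradient of a tangential field contributes an extra $-h_n(k_sR)/R$ term; this is precisely the source of the subtraction $k_sRh_n'-h_n$ appearing in $\lambda_{1,n}$, and matches the scalar Helmholtz N-P spectrum on balls established by analogous means in \cite{DLL2020}.
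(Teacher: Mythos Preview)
Your approach is correct and is essentially the route that the paper defers to: the paper does not prove this lemma at all but simply states that it ``can be proved by modifying the proofs of \cite[Theorem 3.1--3.2]{DLL2020}'' and omits details. Your sketch reconstructs that argument faithfully---reducing the elastic single-layer on $\mathcal{T}_n^m$ to the scalar Helmholtz single-layer at wavenumber $k_s$, then reading off the eigenvalue from the exterior traction and the jump relation.

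One point deserves sharpening. When you write ``use orthonormality of $\{Y_n^m\}$ after factoring $\mathcal{T}_n^m$ through its defining scalar $Y_n^m$,'' the precise statement you need is that each Cartesian component of $\mathcal{T}_n^m = \nabla_{\mathbb{S}^2}Y_n^m\wedge\nu$ is itself a scalar spherical harmonic of degree $n$ (this is the angular-momentum identity $(\mathbf{x}\times\nabla)Y_n^m \in \mathrm{span}\{Y_n^{m'}\}$). With that in hand the scalar Funk--Hecke expansion applies componentwise and produces the common radial factor $j_n(k_s r_<)h_n(k_s r_>)$; otherwise the phrase ``factoring through $Y_n^m$'' is ambiguous. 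Your traction computation is fine: the extra $-h_n(k_sR)/R$ you identify comes exactly from $(\nabla\mathbf{u})^{\!\top}\nu = -\mathbf{u}/r$ for tangential $\mathbf{u}$ on a sphere, and combining it with the radial derivative and the jump relation \eqref{eq:jump relation} yields the stated $\lambda_{1,n}$.
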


Next, we present the appropriate choice of incident waves relevant to our study. For any $n\in \mathbb N$, let 
\begin{equation}\label{eq:incident wave in T_n^m}
	\mathbf{u}^{i}(\bmf x)=\sum_{m=-n}^n f_{n, m} j_n\left(k_s|\mathbf{x}|\right) \mathcal{T}_n^m (\theta, \varphi),
\end{equation}
where $k_s$ and $ \mathcal{T}_n^m$ are defined by \eqref{eq:ks kp defination} and  \eqref{eq:Tnm definition}, respectively,  nonzero vector $(f_{n, -n},\ldots, f_{n, n})\in \mathbb C^{2n+1}$.  
According to \cite{BenSinbook,Dassions,DLL2020}, $\mathbf{u}^{i}$ given by \eqref{eq:incident wave in T_n^m} is an entire solution to \eqref{eq:incident elastic wave}, which is the incident wave for investigating the boundary localization of the internal total wave field $\mathbf u|_{D}$ and exterior scattered wave field $\mathbf u^s|_{\mathbb{R}^3\setminus \overline D}$ to \eqref{eq:scattering problem equation} associated with    $\mathbf{u}^{i}$ in Theorem \ref{thm:internal surface localization for scattering problem}. In the following lemma, we shall derive asymptotic  expansions for $\mathbf u|_{D}$ and $\mathbf u^s|_{\mathbb R^3 \setminus \overline D}$ with respect to the sub-wavelength angular frequency $\omega$ and the high contrast parameter $\delta$ defined by \eqref{eq:lambda mu rho}.

\begin{lem}\label{lem:total field and scatter field}
	Consider the elastic scattering problem \eqref{eq:system} associated with the incident wave $	\mathbf{u}^{i}$ given by \eqref{eq:incident wave in T_n^m} and the elastic inclusion $(D;\tilde{\lambda},\tilde{\mu},\tilde{\rho})$ embedded in a homogeneous elastic medium $(\mathbb R^3 \setminus \overline D;\lambda,\mu,\rho)$, where $D$ is a unit ball centered at the origin in $\mathbb R^3$. Under the assumptions \eqref{eq:lambda mu rho}-\eqref{eq:tau satisfies} and \eqref{eq:assp omega},  the internal total field $\mathbf u$ in $D$ and the scattered field $\mathbf u^s$ in $\mathbb R^3\setminus \overline D$   to  \eqref{eq:system} have the following asymptotic  expansions with respect to $\omega$ and $\delta$ as follows 
	
	\begin{align}
		\mathbf{u}|_D
		&=\sum_{m=-n}^{n}\frac{f_{n,m}\delta\rho^{\frac{n}{2}}|\bmf x|^n\omega^n}{(2n-1)!![\delta(n+2)+n-1]\mu^{\frac{n}{2}}}\left(1+\mathcal{O}\left(\frac{\rho\omega^2}{(2n+1)\mu}\right)\right)\mathcal{T}_n^m,\label{eq:u single potential inside the unit sphere} \\
		\mathbf u^s|_{\mathbb R^3 \setminus \overline D} 
		&=\sum_{m=-n}^n \frac{-f_{n,m}(n-1)(1-\delta) \rho^{\frac{n}{2}}\omega^n }{(2 n+1) ! ![\delta(n+2)+n-1]\mu^{\frac{n}{2}}|\bmf x|^{n+1} }\left(1+\mathcal{\mathcal{O}}\left( \frac{[(1-n)+\delta(n+1)]\rho\omega^{2}}{(2n+3)(n-1)(1-\delta)\mu}\right)\right)\mathcal{T}_n^m. \label{eq:u^s expansion} 
	\end{align}

\end{lem}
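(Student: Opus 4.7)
The plan is to exploit the spectral diagonalization afforded by Lemma~\ref{lem:spectral NP}. Since the incident field \eqref{eq:incident wave in T_n^m} is a finite sum of the toroidal modes $\{\mathcal{T}_n^m\}_{m=-n}^{n}$ at a single fixed $n$, and since the operators $\mathcal{S}_{\partial D}^\omega$, $\tilde{\mathcal{S}}_{\partial D}^\omega$ together with the N-P operators $\mathcal{K}_{\partial D}^{\omega,*}$, $\tilde{\mathcal{K}}_{\partial D}^{\omega,*}$ all act diagonally on each $\mathcal{T}_n^m$ on the unit sphere, I would set $\boldsymbol{\varphi_1}=\sum_m a_m\mathcal{T}_n^m$ and $\boldsymbol{\varphi_2}=\sum_m b_m\mathcal{T}_n^m$. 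The operator equation \eqref{eq:scattering problem equation} then decouples into a scalar $2\times 2$ linear system for each pair $(a_m,b_m)$, with entries explicit in $j_n,h_n$ at $k_s$ and $\tilde k_s$ and in the moduli $\mu,\tilde\mu$.

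To assemble the right-hand side $\bmf{F}=(\mathbf{u}^i|_{\partial D},\partial_\nu\mathbf{u}^i|_{\partial D})^\top$, I would use the key observation that \eqref{eq:S Tnm in ball} (read with the outer parameters $(\lambda,\mu)$ in place of $(\tilde\lambda,\tilde\mu)$) shows that the interior trace of $\mathcal{S}_{\partial D}^\omega[\mathcal{T}_n^m]$ in $D$ has precisely the radial profile $j_n(k_s|\mathbf{x}|)\mathcal{T}_n^m$ defining $\mathbf{u}^i$. Hence $\mathbf{u}^i|_D$ is a scalar multiple of the interior trace of a single-layer potential, and the jump relation \eqref{eq:jump relation} produces $\partial_\nu\mathbf{u}^i|_{\partial D}$ as an explicit multiple of $(-\tfrac12+\lambda_{1,n})\mathcal{T}_n^m$, bypassing any direct evaluation of the symmetric gradient in spherical coordinates. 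Solving the $2\times 2$ system by Cramer's rule then yields $a_m,b_m$ in closed form as ratios of products of Bessel and Hankel functions at $k_s,\tilde k_s$; inserting them into \eqref{eq:total field} via \eqref{eq:S Tnm in ball} and \eqref{eq:S partial outside ball} writes $\mathbf{u}|_D$ and $\mathbf{u}^s|_{\mathbb R^3\setminus\overline D}$ as explicit multiples of $j_n(\tilde k_s|\mathbf{x}|)\mathcal{T}_n^m$ and $h_n(k_s|\mathbf{x}|)\mathcal{T}_n^m$ respectively.

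The main obstacle is the asymptotic analysis in the two independent small parameters $\omega$ and $\delta$, using the relations $\tilde k_s=\tau k_s$, $\tilde k_p=\tau k_p$ and $\tilde\mu=\mu/\delta$. Substituting the small-argument expansions \eqref{eq:jn expansion}--\eqref{eq:hn expansion} into both the Cramer determinant and its numerators produces several competing terms of different orders that must be tracked carefully. The crucial algebraic step is to show that the leading contribution to the determinant, after combining the $z^{-(n+1)}$ singularities of $h_n$ with the $z^n$ zeros of $j_n$ and with the factor $\delta/\mu$ coming from $1/\tilde\mu$, is proportional to $\delta(n+2)+n-1$; this fixes the common denominator appearing in both expansions. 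The leading numerators collapse to $(2n\pm 1)!!$-type constants multiplying $k_s^n=\omega^n(\rho/\mu)^{n/2}$, which—together with the radial dependence $j_n(\tilde k_s|\mathbf{x}|)\sim|\mathbf{x}|^n$ inside $D$ and $h_n(k_s|\mathbf{x}|)\sim|\mathbf{x}|^{-(n+1)}$ outside $D$—reproduces the leading terms of \eqref{eq:u single potential inside the unit sphere} and \eqref{eq:u^s expansion}. The next-to-leading corrections in \eqref{eq:jn expansion}--\eqref{eq:hn expansion}, propagated through the recurrences \eqref{eq:hn qiudao}, then yield the stated $\mathcal{O}$-remainders; in particular, tracking the exterior correction factor $[(1-n)+\delta(n+1)]$ requires careful bookkeeping of the competing $\omega^2$-terms generated by the second-order pieces of $j_n$ and $h_n$ in both the numerator and denominator of the Cramer ratio.
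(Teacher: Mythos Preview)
Your proposal is correct and follows essentially the same route as the paper: both exploit the spectral diagonalization of Lemma~\ref{lem:spectral NP} on the toroidal modes to reduce \eqref{eq:scattering problem equation} to a scalar $2\times 2$ system for the coefficients of $\boldsymbol{\varphi_1},\boldsymbol{\varphi_2}$, and then carry out a small-$\omega$ asymptotic analysis before inserting the densities back into \eqref{eq:S Tnm in ball} and \eqref{eq:S partial outside ball}.

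There are two minor tactical differences worth noting. First, the paper computes the traction $\partial_\nu\mathbf{u}^i|_{\partial D}$ directly as $f_{n,m}\mu\bigl(k_s j_n'(k_s)-j_n(k_s)\bigr)\mathcal{T}_n^m$, whereas you route it through the jump relation by recognizing $\mathbf{u}^i$ as a multiple of an interior single-layer trace; both yield the same right-hand side. Second, and more substantively, the paper does not solve the $2\times 2$ system by Cramer's rule and then expand the resulting ratio. Instead it expands the system matrix itself as $\boldsymbol{A}_n=\boldsymbol{A_0}+\omega^2\boldsymbol{A_2}+\omega^4\boldsymbol{A_4}+\cdots$, verifies that $\det\boldsymbol{A_0}=[\delta(n+2)+n-1]/(\mu(2n+1)^2)\neq 0$, and inverts via the Neumann series $\boldsymbol{x}_{n,m}=(\boldsymbol{I}-\omega^2\boldsymbol{A_0^{-1}}\boldsymbol{A_2}+\cdots)\boldsymbol{A_0^{-1}}\boldsymbol{b}_{n,m}$. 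This organization makes the denominator $\delta(n+2)+n-1$ appear transparently as the determinant of the \emph{leading} matrix $\boldsymbol{A_0}$, and makes the $\mathcal{O}(\omega^2)$ remainders fall out mechanically from $\boldsymbol{A_0^{-1}}\boldsymbol{A_2}$, rather than requiring the ``careful bookkeeping'' you anticipate when expanding a Cramer ratio. Your approach is perfectly sound, but the paper's matrix-perturbation version is somewhat cleaner for tracking the stated error constants.
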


\begin{proof}

	In view of  \eqref{eq:scattering problem equation} and \eqref{eq:incident wave in T_n^m}, using the orthogonality of the functions $\mathcal{T}_n^m$, $\mathcal{I}_n^m$ and $\mathcal{N}_n^m$, the density functions $\boldsymbol{\varphi_1}\in L^2 (\mathbb S^2 )^3 $ and $\boldsymbol{\varphi_2}\in L^2 (\mathbb S^2 )^3$ in \eqref{eq:scattering problem equation}  can be chosen  as    
	\begin{align}
		\boldsymbol{\varphi_1}&=\sum_{m=-n}^n \varphi_{1, n,m} \mathcal{T}_n^m, \quad 						\boldsymbol{\varphi_2}= \sum_{m=-n}^n \varphi_{2, n,m} \mathcal{T}_n^m, \notag
	\end{align} 
	where $\varphi_{j, n,m} $ $(j=1,2)$ are constants to be determined.

	From the jump relation \eqref{eq:jump relation}, using \eqref{eq:S Tnm in ball}, \eqref{eq:K* in Tnm} and the fact that $\left(\mathcal{I}_n^m, \mathcal{T}_n^m, \mathcal{N}_n^m\right)$ forms a orthogonal basis of $L^2 (\mathbb S^2 )^3$, the integral equation \eqref{eq:scattering problem equation}  can be written in the following algebraic system
	\begin{equation}\label{eq:BX=b}
		\boldsymbol{A}_n\boldsymbol{x}_{n,m}=\boldsymbol{b}_{n,m},
	\end{equation}
	where
	\begin{equation}\label{eq:B X b defination}
		\boldsymbol{A}_n=	\left[\begin{array}{ll}
			a_{11} & a_{12} \\
			a_{21} & a_{22}
		\end{array}\right],\quad \boldsymbol{x}_{n,m}=\left[\begin{array}{l}
			\varphi_{1, n,m} \\
			\varphi_{2, n,m}
		\end{array}\right],\quad\boldsymbol{b}_{n,m}=\left[\begin{array}{c}
			f_{n,m} j_n\left(k_s \right) \\
			g_{n, m}
		\end{array}\right],
	\end{equation}
	with
	\begin{align}
		a_{11}&=-\frac{\mathrm{i} \tilde{k}_s  j_n(\tilde{k}_s ) h_n(\tilde{k}_s )}{\tilde\mu}, \quad a_{12}=\frac{\mathrm{i} k_s  j_n(k_s ) h_n(k_s )}{\mu}, \quad g_{n, m}=f_{n,m} \mu\left(k_s  j_n^{\prime}\left(k_s \right)-j_n\left(k_s \right)\right), \notag\\
		a_{21}&=-\mathrm{i} \tilde{k}_s  h_n(\tilde{k}_s )(\tilde{k}_s  j_n^{\prime}(\tilde{k}_s )-j_n(\tilde{k}_s )), \quad a_{22}=\mathrm{i} {k}_s  j_n\left({k}_s \right)\left({k}_s  h_n^{\prime}\left({k}_s \right)-h_n\left({k}_s \right)\right). \notag
	\end{align}
	Here $k_s$, $k_p$, $\tilde{k}_s$ and $\tilde{k}_p$ are defined in \eqref{eq:ks kp defination} and \eqref{eq:lambda mu rho}, respectively. Due to $\omega \ll 1$ and $\delta \ll 1$, it yields that \eqref{eq:kp small} and \eqref{eq:kp in small}, which imply that the system matrix $\boldsymbol{ A}_n $ in \eqref{eq:BX=b} has the following absolutely convergent series as follows 
	\begin{equation}\label{eq:B expansion}
		\boldsymbol A_n=	\boldsymbol{A_0}+\omega^2\boldsymbol{A_2}+\omega^4\boldsymbol{A_4}+\mathcal{O}\left(\frac{\omega^6}{n^7}\right),
	\end{equation}
	where we utilize \eqref{eq:jn expansion} and \eqref{eq:hn expansion}. Here
	
	\begin{align}
		\label{eq:A0 A2 defination}
		\boldsymbol{A_0}&=\frac{1}{2n+1}\left[\begin{array}{ll}
			-\frac{\delta}{\mu} & \quad\frac{1}{\mu} \\
			1-n & -(n+2)
		\end{array}\right],\quad
		\boldsymbol{A_2}=\frac{\rho}{(2n+1)(2n+3)(-2n+1)\mu}	\left[\begin{array}{ll}
			\frac{2\delta\tau^2}{\mu} & -\frac{ 2}{\mu} \\
			-\tau^2 &\quad 1
		\end{array}\right], \\
		\label{eq:A4 def}
		\boldsymbol{A_4}&=\frac{3\rho^2}{\mu^2(2n-1)(2n+1)(2n+3)(2n+5)(2n-3)}	\left[\begin{array}{ll}
			\frac{2\delta\tau^4}{\mu} & -\frac{2}{\mu} \\
			-\tau^4 &\quad 1
		\end{array}\right].
	\end{align}
	Since $\delta \in \mathbb R_+$, it can be directly to see that
	$$
	\det(\boldsymbol A_0)=\frac{\delta(n+2)+n-1}{\mu(2n+1)^2}\neq 0,
	$$
	for $n\in \mathbb N$. Therefore, after direct calculations, $\boldsymbol A_0^{-1}$ can be expressed as
	\begin{equation}\label{eq:A0 ni}
		\boldsymbol{A_0^{-1}}=\frac{2n+1}{\delta(n+2)+n-1}	\left[\begin{array}{ll}
			-\mu(n+2)& -1 \\
			\mu(n-1)& -\delta
		\end{array}\right].
	\end{equation}
	Substituting \eqref{eq:B expansion} into \eqref{eq:BX=b}, using \eqref{eq:A0 A2 defination}, \eqref{eq:A4 def} and \eqref{eq:A0 ni}, it yields that			
	\begin{align}\label{eq:x_nm}
		\boldsymbol{x}_{n,m}=&\left(\boldsymbol  {I}+\omega^2\left(\boldsymbol{A_0^{-1}}\boldsymbol{A_2}+\omega^2\boldsymbol{A_0^{-1}}\boldsymbol{A_4}+\mathcal{O}\left(\frac{\omega^4}{n^7}\right)\right)\right)^{-1}\boldsymbol{A_0^{-1}}\boldsymbol{b}_{n,m}\\
		=&\left(\boldsymbol{I}-\omega^2\boldsymbol{A_0^{-1}}\boldsymbol{A_2}+\mathcal{O}(\frac{\omega^4}{n^4})\right)\boldsymbol{A_0^{-1}}\boldsymbol{b}_{n,m}\notag\\
		=&\left(\begin{array}{c}
			c_{1,n,m}	 \\
			c_{2,n,m}
		\end{array}\right)
		-\omega^2c_{3,n}\left(\begin{array}{c}
			[1-2\delta(n+2)]\tau^2c_{1,n,m}+(2n+3)c_{2,n,m}	 \\
			(2n-1)	\delta\tau^2 c_{1,n,m}-[2(n-1)+\delta]c_{2,n,m}
		\end{array}\right)
		+\mathcal{O}\left(\frac{\omega^4}{n^4}\left(\begin{array}{c}
			c_{1,n,m}	 \\
			c_{2,n,m}
		\end{array}\right)\right),\notag
		\end{align}
		where
		\begin{align}
			c_{1,n,m}&=\frac{(2n+1)f_{n,m}\mu[-(2n+1)j_n(k_s)+k_sj_{n+1}(k_s)]}{\delta(n+2)+n-1},\notag\\
			c_{2,n,m}&=\frac{(2n+1)f_{n,m}\mu[\delta k_s j_{n+1}(k_s)+(n-1)(1-\delta)j_n(k_s)]}{\delta(n+2)+(n-1)},\notag\\
			c_{3,n}&=\frac{\rho}{[\delta(n+2)+n-1](2n+3)(-2n+1)\mu}.\notag
		\end{align}
		By substituting \eqref{eq:jn expansion} and \eqref{eq:hn qiudao} into \eqref{eq:x_nm}, we get 
		\begin{align}\label{eq:X solution}
			\boldsymbol{x}_{n,m}
			=\frac{f_{n,m}(2n+1)\rho^{\frac{n}{2}}\omega^n}{(2n-1)!![\delta(n+2)+n-1]\mu^{\frac{n-2}{2}}}\cdot\left(\begin{array}{c}
				-1	 \\
				\frac{(n-1)(1-\delta)}{2n+1}
			\end{array}\right)
			+\mathcal{O}\left(\begin{array}{c}
				{x}_{1,n,m,\omega}	 \\
				{x}_{2,n,m,\omega}
			\end{array}\right).
		\end{align}
		with 
		\begin{align}
			{x}_{1,n,m,\omega}=\frac{f_{n,m}\rho^{\frac{n+2}{2}}\omega^{n+2}}{(2n-1)!![\delta(n+2)+n-1]\mu^{\frac{n}{2}}}, 
			\quad {x}_{2,n,m,\omega}=\frac{f_{n,m}[(1-n)+\delta(n+1)]\rho^{\frac{n+2}{2}}\omega^{n+2}}{(2n+3)(2n-1)!![\delta(n+2)+n-1]\mu^{\frac{n}{2}}},\notag
		\end{align}
		where $\omega\ll 1 $ is the fixed frequency of the incident wave defined in \eqref{eq:incident wave in T_n^m}, $\lambda,\mu,\rho$ satisfy the conditions of $\eqref{eq:lambda mu rho O1}$. Combining $\eqref{eq:B X b defination}$ and $\eqref{eq:X solution}$, it directly yields that
		\begin{align}
			\boldsymbol{\varphi_1}
			&=\sum_{m=-n}^n \frac{-f_{n,m}(2n+1)\rho^{\frac{n}{2}}\omega^n}{(2n-1)!![\delta(n+2)+n-1]\mu^{\frac{n-2}{2}}}
			\left(1+\mathcal{O}\left( -\frac{\rho\omega^2}{(2n+1)\mu}\right)\right)\mathcal{T}_n^m,\label{eq:varphi 1  definition of rho} \\
			\boldsymbol{\varphi_2} 
			&=\sum_{m=-n}^n\frac{f_{n,m}(n-1)(1-\delta)\rho^{\frac{n}{2}}\omega^n}{(2n-1)!![\delta(n+2)+n-1]\mu^{\frac{n-2}{2}}}\left(1+\mathcal{O}\left(\frac{[(1-n)+\delta(n+1)]\rho\omega^2}{(2n+3)(n-1)(1-\delta)\mu}\right)\right)\mathcal{T}_n^m.	\label{eq:varphi 2  definition of rho}
		\end{align}
	%
	%
	By virtue of \eqref{eq:total field}, \eqref{eq:S Tnm in ball} and \eqref{eq:varphi 1  definition of rho}, we obtain \eqref{eq:u single potential inside the unit sphere}. 
	Similarly, using formulas \eqref{eq:total field}, \eqref{eq:S partial outside ball} and \eqref{eq:varphi 2  definition of rho}, one can derive \eqref{eq:u^s expansion}.

	The proof is complete.
\end{proof}

In Theorem~\ref{thm:internal surface localization for scattering problem}, under the assumptions \eqref{eq:lambda mu rho}--\eqref{eq:assp omega}, we rigorously prove that both the interior total field \( \mathbf{u}|_D \) and the exterior scattered field \( \mathbf{u}^s|_{\mathbb{R}^3 \setminus \overline{D}} \) of \eqref{eq:system} are boundary localized in the sense of Definition~\ref{def:surface localized}, provided that the incident wave \( \mathbf{u}^i \) is chosen as in \eqref{eq:incident wave in T_n^m} with a sufficiently large \( n \).  For a fixed material parameter $\delta$, where $\delta$ denotes the ratio of the Lam\'e parameters between the hard elastic inclusion $D$ and the soft elastic background $\mathbb{R}^3 \setminus D$ as defined in \eqref{eq:lambda mu rho}, and for a prescribed sufficiently small boundary localization level $\varepsilon$, we can select the index $n$ of the incident wave defined in \eqref{eq:incident wave in T_n^m} to achieve boundary localization in both the interior total field $\mathbf{u}|_D$ and the exterior scattered field $\mathbf{u}^s|_{\mathbb{R}^3 \setminus \overline{D}}$ near $\partial D$.

\begin{thm}\label{thm:internal surface localization for scattering problem}
	Consider the elastic scattering problem \eqref{eq:system} associated with the elastic inclusion $(D;\tilde{\lambda},\tilde{\mu},\tilde{\rho})$ embedded in a homogeneous elastic medium $(\mathbb R^3 \setminus \overline D;\lambda,\mu,\rho)$, where $D$ is a unit ball centered at the origin in $\mathbb R^3$.  Let $B_R$ denote a ball of radius $R$ centered at the origin in $\mathbb{R}^3$ with $D\subseteq B_R$. Recall the definitions of  $ \mathcal{M}_{-}^{\xi_1} $ and $ \mathcal{M}_{+}^{\xi_2} $ in \eqref{eq:Mdef}, expressed in terms of $ \xi_1 = 1 - \gamma_1 $ and $ \xi_2 = \gamma_2 - 1 $, where $ \gamma_1 \in (0, 1) $ and $ \gamma_2 \in (1, R) $ are constants.  Let the corresponding internal total wave field $\mathbf{u}$ in $D$ and external scattered field $\mathbf{u}^s$ in $\mathbb R^3 \setminus  D$ be the solution to \eqref{eq:system} associated with incident wave $\mathbf u^i$ defined in \eqref{eq:incident wave in T_n^m}. Denote $\lfloor \cdot \rfloor$ by the floor function. 
	
	Under the assumptions \eqref{eq:lambda mu rho}-\eqref{eq:assp omega}, for any $ \gamma_1\in (0,1)$, $\gamma_2\in (1,R)$ and sufficient small $\varepsilon \in \mathbb R_+$, if the index $n$ associated with $\mathbf u^i$ defined in \eqref{eq:incident wave in T_n^m} satisfies 
	\begin{align}\label{eq:n1n2 max}
		\mbox{	
	$n\geqslant\max\{n_1,n_2\}$},
	\end{align}
	where 
	\begin{equation}\label{eq:n1 n2 def}
		n_1=\left\lfloor\frac{1}{2}\left(\frac{\ln{\varepsilon}}{\ln \gamma_1}-3\right)\right\rfloor+1, \qquad n_2=\left\lfloor\frac{1}{2}\left(1-\frac{\ln \varepsilon}{\ln \gamma_2}\right)\right\rfloor+1,
	\end{equation}
	then 
	\begin{equation}\label{eq:thm 3.1}
		\frac{\|\mathbf{u}\|_{L^2 (D\setminus \mathcal{M}_{-}^{1-\gamma_1}(\partial D))^3}^2}{\| \mathbf{u}\|_{L^2(D)^3}^2}\leqslant \varepsilon+\mathcal{O}\left(\varepsilon\omega^2\right) \ll 1, \qquad 
		 \frac{\|\mathbf{u}^s\|_{L^2 ((B_R \setminus \overline D) \setminus \mathcal{M}_{+}^{\gamma_{2}-1}(\partial D))^3}^2}{\|	\mathbf{u}^s\|_{L^2(B_R \setminus \overline D)^3}^2}\leqslant \varepsilon+\mathcal{O}\left(\varepsilon\omega^2\right) \ll 1.
	\end{equation}
\end{thm}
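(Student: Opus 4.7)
The plan is to plug the asymptotic expansions \eqref{eq:u single potential inside the unit sphere} and \eqref{eq:u^s expansion} from Lemma \ref{lem:total field and scatter field} directly into the two ratios in \eqref{eq:thm 3.1} and to reduce the problem, via orthogonality of the vectorial spherical harmonics, to two elementary one-dimensional radial integrals. Observe first that because $D$ is the unit ball, the sets appearing in \eqref{eq:thm 3.1} simplify to
\begin{equation*}
D\setminus\mathcal{M}_{-}^{1-\gamma_1}(\partial D)=\{\mathbf x\in\mathbb R^3:|\mathbf x|\leqslant\gamma_1\},\qquad (B_R\setminus\overline D)\setminus\mathcal{M}_{+}^{\gamma_2-1}(\partial D)=\{\mathbf x\in\mathbb R^3:\gamma_2\leqslant|\mathbf x|\leqslant R\},
\end{equation*}
so the integrals split into an angular part on $\mathbb S^2$ and a radial part.

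For the interior estimate, I use \eqref{eq:u single potential inside the unit sphere} together with the $L^2(\mathbb S^2)^3$-orthogonality of the family $\{\mathcal T_n^m\}$ from Lemma \ref{lem:spectral NP}. The $\mathbf x$-dependence of $\mathbf u|_D$ factors as $|\mathbf x|^{n}\mathcal T_n^m(\theta,\varphi)$, so the angular parts cancel in the ratio and one is left with
\begin{equation*}
\frac{\|\mathbf u\|_{L^2(D\setminus\mathcal{M}_{-}^{1-\gamma_1}(\partial D))^3}^{2}}{\|\mathbf u\|_{L^2(D)^3}^{2}}=\frac{\int_{0}^{\gamma_1}r^{2n+2}\,\mathrm dr}{\int_{0}^{1}r^{2n+2}\,\mathrm dr}\,\bigl(1+\mathcal O(\omega^{2})\bigr)=\gamma_1^{2n+3}\bigl(1+\mathcal O(\omega^{2})\bigr),
\end{equation*}
where the $\mathcal O(\omega^{2})$ factor comes from squaring the remainder in \eqref{eq:u single potential inside the unit sphere} and is uniform in $\gamma_1$. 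The condition $n\geqslant n_1$ in \eqref{eq:n1 n2 def} is precisely the one that forces $\gamma_1^{2n+3}\leqslant\varepsilon$, since $\ln\gamma_1<0$.

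For the exterior estimate the same mechanism applies to \eqref{eq:u^s expansion}: the radial dependence is $|\mathbf x|^{-(n+1)}$, so
\begin{equation*}
\frac{\|\mathbf u^{s}\|_{L^2(\{\gamma_2\leqslant|\mathbf x|\leqslant R\})^3}^{2}}{\|\mathbf u^{s}\|_{L^2(\{1\leqslant|\mathbf x|\leqslant R\})^3}^{2}}=\frac{\int_{\gamma_2}^{R}r^{-2n}\,\mathrm dr}{\int_{1}^{R}r^{-2n}\,\mathrm dr}\,\bigl(1+\mathcal O(\omega^{2})\bigr)=\frac{\gamma_2^{1-2n}-R^{1-2n}}{1-R^{1-2n}}\,\bigl(1+\mathcal O(\omega^{2})\bigr),
\end{equation*}
which I bound above by $\gamma_2^{1-2n}\bigl(1+\mathcal O(\omega^{2})\bigr)$ uniformly in $R$; requiring $n\geqslant n_2$ from \eqref{eq:n1 n2 def} then yields $\gamma_2^{1-2n}\leqslant\varepsilon$.

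The most delicate point I expect is book-keeping the remainder terms. The coefficient in \eqref{eq:u^s expansion} carries the factor $(n-1)(1-\delta)$, which vanishes when $n=1$ or $\delta=1$; the hypothesis \eqref{eq:n1n2 max} makes $n\geqslant n_2\geqslant 1$, but I must check that $n\geqslant 2$ is in fact forced (otherwise the leading coefficient degenerates and the $\mathcal O$-term in \eqref{eq:u^s expansion} would dominate). Since the numerator and denominator both contain this factor, it cancels in the ratio and does not affect the final bound, so the argument still goes through for $n\geqslant 2$, which is ensured by taking $\varepsilon$ small enough in \eqref{eq:n1 n2 def}. The contribution of the remainder terms to the numerator must also be controlled against the leading contribution to the denominator uniformly in $\gamma_1,\gamma_2$; this reduces to the elementary observation that $\omega^{2}=o(1)$ by \eqref{eq:assp omega} together with \eqref{eq:lambda mu rho O1}, which absorbs the $\mathcal O(\omega^{2})$ perturbations into the factor $1+\mathcal O(\omega^{2})$ displayed in \eqref{eq:thm 3.1}.
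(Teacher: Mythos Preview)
Your proposal is correct and follows essentially the same approach as the paper: both arguments plug the asymptotic expansions \eqref{eq:u single potential inside the unit sphere} and \eqref{eq:u^s expansion} into the ratios, exploit the $L^2(\mathbb S^2)^3$-orthogonality of $\{\mathcal T_n^m\}$ to reduce everything to the radial integrals $\int r^{2n+2}\,\mathrm dr$ and $\int r^{-2n}\,\mathrm dr$, and then read off the thresholds $n_1,n_2$ from $\gamma_1^{2n+3}\leqslant\varepsilon$ and $\gamma_2^{1-2n}\leqslant\varepsilon$. Your treatment is in fact slightly more streamlined, since you observe directly that the common angular and constant factors cancel in the ratio rather than carrying the explicit constants $K_{n,\delta,\rho,\mu}$, $G_{n,\delta,\rho,\mu}$ through the computation as the paper does.
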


\begin{proof}

	Firstly, we shall derive the asymptotic analysis for  the $L^2$ norm of the internal total field $\mathbf{u}|_D$ within $D\setminus \mathcal{M}_{-}^{1-\gamma_1}(\partial D)$ with respect to the parameter $\omega$.
	Since $\mathbf u^i$ is given by \eqref{eq:incident wave in T_n^m}, according to Lemma \ref{lem:total field and scatter field}, we know that the explicit expression of ${\bf u}|_{D}$ is given by \eqref{eq:u single potential inside the unit sphere}. By the orthogonality of $\mathcal{T}_n^m$  for different $n $ and $m$, under the assumptions \eqref{eq:lambda mu rho}-\eqref{eq:assp omega} we have
	\begin{align}\label{eq:u L2norm}
		&\| {\bf u}\|_{L^2(D\setminus \mathcal{M}_{-}^{1-\gamma_1}(\partial D))^3}^2\notag\\
		=&\int_{D\setminus \mathcal{M}_{-}^{1-\gamma_1}(\partial D)}\left| \sum_{m=-n}^{n}\left[\frac{f_{n,m}\delta\rho^{\frac{n}{2}}|\bmf x|^n\omega^n}{(2n-1)!![\delta(n+2)+n-1]\mu^{\frac{n}{2}}}\left(1+\mathcal{O}\left(\frac{\rho\omega^2}{(2n+1)\mu}\right)\right)\right] \mathcal{T}_n^m \right|^2 \rmd \mathrm{x} \notag\\
		=&\int_0^{2 \pi} \int_0^\pi \int_0^{\gamma_1}  \sum_{m=-n}^{n}\left[\frac{f_{n,m}\delta\rho^{\frac{n}{2}}r^{n}\sin\theta\omega^n}{(2n-1)!![\delta(n+2)+n-1]\mu^{\frac{n}{2}}}\left(1+\mathcal{O}\left(\frac{\rho\omega^2}{(2n+1)\mu}\right)\right)\right]^2 \cdot r^2 \sin\theta \notag\\
		&\quad\times n(n+1)\rmd r \rmd \theta \rmd \varphi \notag\\
		=&\sum_{m=-n}^n  \frac{4\pi f_{n,m}^2  n(n+1) \omega^{2 n} \delta^2 \rho^n}{[(2 n-1) ! !]^2[\delta(n+2)+n-1]^2 \mu^n} \int_0^{\gamma_1} r^{2 n+2} \rmd  r+\mathcal{O}\left(K_{n,\delta,\rho,\mu}^{\prime}\gamma_{1}^{2n+3}\omega^{2 n+2} \right)\notag\\
		=&K_{n,\delta,\rho,\mu} \gamma_{1}^{2n+3}\omega^{2 n} +\mathcal{O}\left(K_{n,\delta,\rho,\mu}^{\prime}\gamma_{1}^{2n+3}\omega^{2 n+2}\right)\notag \\
		=&K_{n,\delta,\rho,\mu} \omega^{2 n} \gamma_1^{2 n+3}\left(1+\mathcal{O}\left(\omega^2\right)\right), 
	\end{align}
	where  
	\begin{align}
		K_{n,\delta,\rho,\mu}&=\frac{4 \pi n(n+1) \rho^n\delta^2}{(2 n+3) [(2 n-1) ! !]^2[\delta(n+2)+n-1]^2 \mu^n}\sum_{m=-n}^n  |f_{n,m}|^2 , \notag\\ 
		K_{n,\delta,\rho,\mu}^{\prime}&=\frac{4 \pi n(n+1)\rho^{n+1}\delta^2}{(2n+3)!!(2n-1)!![\delta(n+2)+n-1]^2\mu^{n+1}}\sum_{m=-n}^n |f_{n,m}|^2. \notag 
	\end{align}
	Using a  similar argument for deriving \eqref{eq:u L2norm}, it directly follows that 
	$$
	\|\mathbf{u}\|_{L^2\left(D\right)^3}^2=K_{n,\delta,\rho,\mu} \omega^{2 n} \left(1+\mathcal{O}\left(\omega^2\right)\right).
	$$ 
	Due to $n\geqslant n_1 $, where $n_1$ is given by \eqref{eq:n1 n2 def}, we know that $\gamma_1^{2n+3} \leqslant \varepsilon $.  Therefore, one has 
	\begin{align}
		\frac{\|\mathbf{u}\|_{L^2(D\setminus \mathcal{M}_{-}^{1-\gamma_1}(\partial D))^3}^2}{\|\mathbf{u}\|_{L^2\left(D\right)^3}^2}
		&=\gamma_1^{2 n+3}\left(1+\mathcal{O}\left(\omega^2\right)\right)\left(1-\mathcal{O}\left(\omega^2\right)\right) \leqslant \varepsilon+\mathcal{O}\left(\varepsilon\omega^2\right).\notag
	\end{align}

	Subsequently, we show that the external scattered field 
	$\mathbf{u}^s$ is also concentrated  on the boundary outside the inclusion $D$ in the sense of Definition \ref{def:surface localized}. Utilizing the similar argument for \eqref{eq:u L2norm}, in view of \eqref{eq:u single potential inside the unit sphere} in Lemma \ref{lem:total field and scatter field},   we have 
	\begin{align*}
		&\|\mathbf{u}^s\|_{L^2((B_R \setminus \overline D) \setminus \mathcal{M}_{+}^{\gamma_{2}-1}(\partial D))^3}^2\\
		=&\int_{(B_R \setminus \overline D) \setminus \mathcal{M}_{+}^{\gamma_{2}-1}(\partial D)} \left|\sum_{m=-n}^n \left[\frac{-f_{n,m}(n-1)(1-\delta) \rho^{\frac{n}{2}}\omega^n }{(2 n+1) ! ![\delta(n+2)+n-1]\mu^{\frac{n}{2}}|\bmf x|^{n+1} }\left(1+\mathcal{\mathcal{O}}\left(\frac{\omega^2}{n}\right)\right)\right]\mathcal{T}_n^m\right|^2\rmd \mathrm{x}\\
		=& \sum_{m=-n}^n \frac{4\pi f_{n,m}^2 n (n+1) (n-1)^2(1-\delta)^2 \rho^n \omega^{2 n}}{[(2 n+1)! !]^2[\delta(n+2)+n-1]^2 \mu^n} \int_{\gamma_2}^{R} r^{-2 n} \rmd r
		+\mathcal{O}\left(\frac{G_{n,\delta,\rho,\mu}^{\prime}\left(R^{2n-1}-{\gamma_2}^{2 n-1}\right)}{{\gamma_2}^{2 n-1} R^{2n-1}} \omega^{2 n+2}\right)\\
		=&\frac{G_{n,\delta,\rho,\mu}\left(R^{2n-1}-{\gamma_2}^{2 n-1}\right)}{{\gamma_2}^{2 n-1} R^{2n-1}} \omega^{2 n}\left(1+\mathcal{O}
		\left( \omega^{2}\right)\right),
	\end{align*}
	where
	\begin{align}
		G_{n,\delta,\rho,\mu}&=\frac{4\pi n(n+1)(n-1)^2(1-\delta)^{2}\rho^{n}}{[(2n+1)!!]^{2}[\delta(n+2)+n-1]^{2}(2n-1)\mu^{n}}\sum_{m=-n}^n |f_{n,m}|^2,\notag\\
		G_{n,\delta,\rho,\mu}^{\prime}&= \frac{4\pi n(n+1)[(1-n)+\delta(n+1)]^2\rho^{\frac{n+2}{2}}}{(2n+3)!!(2n+1)!![\delta(n+2)+n-1]^2(2n-1)\mu^{\frac{n+2}{2}}} \sum_{m=-n}^n |f_{n,m}|^2. \notag
	\end{align}
	Using a  similar argument for deriving \eqref{eq:u L2norm}, it is readily evident that 
	$$
	\| \mathbf{u}^s\|^2_{L^2\left(B_R \setminus \overline D\right)^3}= \frac{G_{n,\delta,\rho,\mu}\left(R^{2n-1}-1\right)}{ R^{2n-1}} \omega^{2 n}\left(1+\mathcal{O}\left(\omega^{2}\right)\right).
	$$
	Due to $n\geqslant n_2 $, where $n_2$ is given by \eqref{eq:n1 n2 def}, we know that $\frac{1}{\gamma_2^{2 n-1}}<\varepsilon$.  Therefore, it yields that
	\begin{align}
		\frac{\| \mathbf{u}^s\|_{L^2((B_R \setminus \overline D) \setminus \mathcal{M}_{+}^{\gamma_{2}-1}(\partial D))^3}^2}{\| \mathbf{u}^s\|^2_{L^2\left(B_R \setminus \overline D\right)^3}}
		=&\frac{\left(R^{2n-1}-{\gamma_2}^{2 n-1}\right)}{\gamma_2^{2n-1}\left(R^{2n-1}-1\right)}\left(1+\mathcal{O}
		\left(\omega^{2}\right)\right)\left(1-\mathcal{O}
		\left( \omega^{2}\right)\right) \notag\\
		=&\frac{1-\left(\frac{\gamma_{2}}{R}\right)^{2n-1}}{\gamma_2^{2n-1}\left(1-\frac{1}{R^{2n-1}}\right)}\left(1+\mathcal{O}
		\left(\omega^{2}\right)\right)\left(1-\mathcal{O}
		\left( \omega^{2}\right)\right) \notag\\
		=&\frac{1}{\gamma_2^{2n-1}}\left(\mathcal{O}\left(1\right)+\mathcal{O}
		\left(\omega^{2}\right)\right)\left(1-\mathcal{O}
		\left( \omega^{2}\right)\right)	\leqslant \varepsilon +\mathcal{O}\left(\varepsilon\omega^2\right). \notag
	\end{align}

	The proof is complete.
\end{proof}

	\begin{rem}

		In Theorem \ref{thm:internal surface localization for scattering problem}, the total wave field generated within the inclusion \( D \) and the exterior scattered wave field outside \( D \) can be localized at the boundary, provided that the incident wave \( \mathbf{u}^i \) takes the form specified in \eqref{eq:incident wave in T_n^m} and \( D \) is a unit ball. Specifically, this result holds for inclusions of radial geometry, a structure frequently encountered in metamaterial science \cite{LAI,LIUZHANG}. Such geometries are often employed to achieve effective negative material parameters, making them a subject of significant interest in the field. This theorem represents the first rigorous attempt to characterize the physical wave patterns associated with high contrast radial inclusions using mathematical analysis. From Theorem~\ref{thm:internal surface localization for scattering problem}, it is evident that both the internal wave and the scattered wave propagate along the boundary of the hard inclusion \( D \) when the incident wave \( \mathbf{u}^i \) is chosen as in \eqref{eq:incident wave in T_n^m}. This type of wave propagation is consistent with phenomena observed in the literature for hard inclusions in material science \cite{LAI,LIUZHANG}. We believe that the findings of this theorem extend to high contrast inclusions of more general shapes. However, a detailed investigation of such cases is beyond the scope of this work and will be addressed in future studies.
	\end{rem}

		\begin{rem}  
			From \eqref{eq:thm 3.1}, it follows directly that  
			\begin{align}\notag
				\frac{\|\mathbf{u}\|_{L^2 ( \mathcal{M}_{-}^{1-\gamma_1}(\partial D))^3}^2}{\| \mathbf{u}\|_{L^2(D)^3}^2} = 1 - \mathcal{O}(\varepsilon), \qquad  
				\frac{\|\mathbf{u}^s\|_{L^2 ( \mathcal{M}_{+}^{\gamma_{2}-1}(\partial D))^3}^2}{\| \mathbf{u}^s\|_{L^2(B_R \setminus \overline{D})^3}^2} = 1 - \mathcal{O}(\varepsilon).  
			\end{align}  
			Here, $\mathcal{M}_{-}^{1-\gamma_1}(\partial D)$ and $\mathcal{M}_{+}^{\gamma_{2}-1}(\partial D) $, defined in \eqref{eq:Mdef} with $ \gamma_1 \in (0, 1) $ and $ \gamma_2 \in (1, R) $ being constants. For a given boundary localization level $\mathcal{O}\left(\varepsilon\right)$ and an index $n$ of the incident wave defined in \eqref{eq:incident wave in T_n^m} that satisfies \eqref{eq:n1n2 max}, where $n_1$ and $n_2$ are defined as in \eqref{eq:n1 n2 def}, the internal total field $\mathbf{u}|_D$ and the external scattered field $\mathbf{u}^s|_{B_R \setminus \overline{D}}$ exhibit boundary localization within $\mathcal{M}_{-}^{1-\gamma_1}(\partial D)$ and $\mathcal{M}_{+}^{\gamma_{2}-1}(\partial D)$, respectively. This observation is consistent with the boundary localization framework introduced in  \eqref{eq:220} of Definition \ref{def:surface localized}.

		\end{rem}

Theorem \ref{thm:internal surface localization for scattering problem} demonstrates that for any fixed material parameter $\delta$ and prescribed boundary localization levels $\varepsilon$, selecting the index $n$ of the incident wave $\mathbf{u}^i$ to satisfy \eqref{eq:n1n2 max} ensures that both the interior total field $\mathbf{u}|_D$ and the exterior scattered field $\mathbf{u}^s|_{\mathbb{R}^3 \setminus \overline{D}}$ exhibit  boundary localization. Corollary \ref{cor:3.2} further shows that,  for a fixed boundary localization level $\varepsilon$, under the condition \eqref{eq:n27}  on  the index $n$ of the incident wave $\mathbf{u}^i$,  the boundary localization for the interior total field $\mathbf{u}|_D$ and the exterior scattered field $\mathbf{u}^s|_{\mathbb{R}^3 \setminus \overline{D}}$  can be achieved through appropriately tuning the material parameter $\delta$ with respect to $\varepsilon$, $\gamma_1$ and $\gamma_2$.

\begin{cor}\label{cor:3.2}
	Under the same assumptions as Theorem \ref{thm:internal surface localization for scattering problem}, for fixed parameters $\gamma_1 \in (0,1)$ and $\gamma_2 \in (1,R)$, and a fixed boundary localization level $\varepsilon$ defined in Definition \ref{def:surface localized} with $\varepsilon<\gamma_{1}$, if the material parameter $\delta$ satisfies
	\begin{align}\label{eq:327}
		\delta \leqslant \beta =\min\left\{\frac{2\ln \gamma_1}{\ln\varepsilon - \ln \gamma_1}, \frac{2\ln \gamma_2}{3\ln{\gamma_2} - \ln \varepsilon} \right\} ,
	\end{align}
	and if the index $n$ associated with the incident wave $\mathbf{u}^i$ defined in \eqref{eq:incident wave in T_n^m} satisfies
	\begin{align}\label{eq:n27}
		n \geqslant \frac{1}{\delta},
	\end{align}
	where $\delta$ represents the ratio of the Lam\'e parameters between the hard elastic inclusion $D$ and the soft elastic background $\mathbb{R}^3 \setminus D$, then the interior total field $\mathbf{u}|_D$ and the exterior scattered field $\mathbf{u}^s|_{\mathbb{R}^3 \setminus \overline{D}}$ satisfy the boundary localization property \eqref{eq:thm 3.1}.
\end{cor}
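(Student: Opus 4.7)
The plan is to reduce Corollary \ref{cor:3.2} directly to Theorem \ref{thm:internal surface localization for scattering problem}: I will verify that the two hypotheses \eqref{eq:327} and \eqref{eq:n27} together force $n\geqslant \max\{n_1,n_2\}$, with $n_1,n_2$ as in \eqref{eq:n1 n2 def}, after which the conclusion \eqref{eq:thm 3.1} is immediate.

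First I would record the sign structure of $\beta$, which is the only place the hypothesis $\varepsilon<\gamma_1$ is really used. Since $\gamma_1\in(0,1)$ and $\varepsilon<\gamma_1<1$, both $\ln\gamma_1$ and $\ln\varepsilon-\ln\gamma_1$ are negative, so $\tfrac{2\ln\gamma_1}{\ln\varepsilon-\ln\gamma_1}>0$. Similarly, $\gamma_2\in(1,R)$ and $\varepsilon<1$ give $\ln\gamma_2>0$ and $3\ln\gamma_2-\ln\varepsilon>0$, so $\tfrac{2\ln\gamma_2}{3\ln\gamma_2-\ln\varepsilon}>0$. Hence $\beta>0$, the assumption $\delta\leqslant\beta$ is well-posed and compatible with $\delta\ll 1$, and inverting $\delta\leqslant\beta$ preserves the direction of the inequality.

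Next, using the trivial bound $\lfloor x\rfloor\leqslant x$ in the definitions \eqref{eq:n1 n2 def} yields
\[
n_1 \leqslant \tfrac{1}{2}\!\left(\tfrac{\ln\varepsilon}{\ln\gamma_1}-3\right)+1 \;=\; \frac{\ln\varepsilon-\ln\gamma_1}{2\ln\gamma_1}, \qquad n_2 \leqslant \tfrac{1}{2}\!\left(1-\tfrac{\ln\varepsilon}{\ln\gamma_2}\right)+1 \;=\; \frac{3\ln\gamma_2-\ln\varepsilon}{2\ln\gamma_2}.
\]
The two right-hand sides are exactly the reciprocals of the two positive quantities whose minimum defines $\beta$ in \eqref{eq:327}. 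Consequently,
\[
\frac{1}{\beta}=\max\!\left\{\frac{\ln\varepsilon-\ln\gamma_1}{2\ln\gamma_1},\;\frac{3\ln\gamma_2-\ln\varepsilon}{2\ln\gamma_2}\right\}\;\geqslant\;\max\{n_1,n_2\}.
\]

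Combining this with the hypotheses \eqref{eq:327} and \eqref{eq:n27} gives the chain $n\geqslant 1/\delta\geqslant 1/\beta\geqslant\max\{n_1,n_2\}$, which is precisely the condition \eqref{eq:n1n2 max} required by Theorem \ref{thm:internal surface localization for scattering problem}. Invoking that theorem then delivers \eqref{eq:thm 3.1} for both the interior total field $\mathbf{u}|_D$ and the exterior scattered field $\mathbf{u}^s|_{\mathbb{R}^3\setminus\overline D}$, completing the proof. The argument is purely algebraic bookkeeping, and the only step that demands modest care is the sign analysis in the first paragraph; once that is recorded, the rest is a direct reduction with no genuine obstacle.
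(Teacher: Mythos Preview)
Your proof is correct and follows exactly the same strategy as the paper: verify that the hypotheses \eqref{eq:327} and \eqref{eq:n27} imply $1/\delta\geqslant\max\{n_1,n_2\}$, and then invoke Theorem \ref{thm:internal surface localization for scattering problem}. The paper's own proof simply asserts that ``it can be verified'' that $1/\delta\geqslant\max\{n_1,n_2\}$ and leaves the arithmetic to the reader, whereas you have written out the floor-function bound and the sign analysis explicitly; this is the same argument, just with the details supplied.
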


\begin{proof}
	When \eqref{eq:327} is satisfied, since  $n_1$ and $n_2$ are as defined in \eqref{eq:n1 n2 def}, it can be verified that 
	\begin{align}\label{eq:326}
		\frac{1}{\delta} \geqslant \max\{n_1, n_2\}. 
	\end{align}
	where $n_1$ and $n_2$ are as defined in \eqref{eq:n1 n2 def}. Combining \eqref{eq:n27} and \eqref{eq:326}, we deduce that the index $n$ satisfies the condition \eqref{eq:n1n2 max} in Theorem \ref{thm:internal surface localization for scattering problem}. Consequently, Corollary \ref{cor:3.2} can be proved directly from  Theorem \ref{thm:internal surface localization for scattering problem}. 
\end{proof}

	\section{Surface resonance and stress concentration}\label{sec:summary of major findings}

In Theorem \ref{thm:internal surface localization for scattering problem} the boundary localization of the internal total field $\mathbf{u}|_D$ and the external scattered field $\mathbf{u}^s|_{\mathbb{R}^3 \setminus \overline{D}}$ is associated with the incident wave $\mathbf{u}^i$ given by~\eqref{eq:incident wave in T_n^m} in the sub-wavelength regime. In this section we shall further show the corresponding surface resonance occurs.  To simplify the exposition, throughout this section, we primarily consider a special form of $\mathbf{u}^i$ introduced by \eqref{eq:incident wave in T_n^m} as follows:  
	\begin{align}\label{eq:u^i 4.1}  
		\mathbf{u}^{i}(\bmf x)= f_{n, n} j_n\left(k_s|\mathbf{x}|\right) \mathcal{T}_n^n (\theta, \varphi),  
	\end{align}  
	where $ f_{n,n} \in \mathbb{C} $ denotes a non-zero constant. In Theorem~\ref{thm:nabla u in thm}, we demonstrate that the internal total field $\mathbf{u}|_D$ and the external scattered field $\mathbf{u}^s|_{\mathbb{R}^3 \setminus \overline{D}}$ exhibit surface resonance, as defined in Definition~\ref{def:surface resonant}. According to Definition~\ref{def:quasi minnaert}, the high contrast inclusion $D$ constitutes a quasi-Minnaert resonator. Moreover, Theorem~\ref{thm:Eu definition in thm} establishes that the fields $\mathbf{u}|_D$ and $\mathbf{u}^s|_{\mathbb{R}^3 \setminus \overline{D}}$ of the scattering problem \eqref{eq:system}, associated with $\mathbf u^i$ given by \eqref{eq:u^i 4.1} and $D$, generate strong stress concentrations near $\partial D$. If the more general form \eqref{eq:incident wave in T_n^m} of $\mathbf{u}^i$ is considered, the proofs of Theorems $\ref{thm:nabla u in thm}$ and $\ref{thm:Eu definition in thm}$ become significantly more involved due to complex technical arguments; therefore, we restrict our analysis to $\mathbf{u}^i$ in the special form \eqref{eq:u^i 4.1}.

   In Theorem~\ref{thm:nabla u in thm}, we establish that for a fixed material parameter $\delta$ defined as the ratio of the Lam\'e parameters of the hard inclusion to those of the soft background medium, and a sufficiently small $\varepsilon$ representing the level of boundary localization, an appropriate index $n$ of the incident wave, as defined in \eqref{eq:u^i 4.1}, can be selected to induce surface resonance.

	\begin{thm}\label{thm:nabla u in thm}  
		Consider the elastic scattering problem~\eqref{eq:system}, where the unit ball $ (D; \tilde{\lambda}, \tilde{\mu}, \tilde{\rho}) $ is embedded within a homogeneous elastic medium $ (\mathbb{R}^3 \setminus \overline{D}; \lambda, \mu, \rho) $. Under the assumptions~\eqref{eq:lambda mu rho}--\eqref{eq:assp omega}, the incident wave $\mathbf{u}^i$ is chosen as defined in~\eqref{eq:u^i 4.1}. Recall that  $\mathcal{M}_{-}^{1-\gamma_1}(\partial D)$ and $\mathcal{M}_{+}^{\gamma_2-1}(\partial D)$, are introduced in~\eqref{eq:Mdef}  with $\gamma_1\in(0,1)$ and $\gamma_2\in(1,R)$, respectively. For a fixed sufficiently small boundary localization level $\varepsilon \ll 1$, if the index $n$ of the incident wave defined in \eqref{eq:u^i 4.1} satisfies 
		\begin{align}\label{eq:n4.2}
			n \geqslant \max\{ n_1,n_2, 1/\delta^2\}
		\end{align}
		 where $\delta$ represents the ratio of the Lam\'e parameters between the hard elastic inclusion $D$ and the soft elastic background $\mathbb{R}^3 \setminus D$ defined in \eqref{eq:lambda mu rho} and $n_1$, $n_2$ are defined in \eqref{eq:n1 n2 def}, then the corresponding total field $\mathbf{u}$ in $D$ and the scattered field $\mathbf{u}^s$ in $\mathbb{R}^3 \setminus D$ satisfy: 
		\begin{align}\label{eq:nablau nabla us gg 1} 
			\frac{\|\nabla \bmf u\|_{L^2(\mathcal{M}_{-}^{1-\gamma_1}(\partial D))^3}}{\| \bmf u^i\|_{L^2(D)^3}}
			&\geqslant \frac{n \delta}{4\sqrt{\pi}} \gg 1,
			\qquad
			\frac{\|\nabla \bmf u^s\|_{L^2(\mathcal{M}_{+}^{\gamma_2-1}(\partial D))^3}}{\| \bmf u^i\|_{L^2(D)^3}}
			\geqslant  \frac{n}{9 \sqrt{\pi} } \gg 1. 
		\end{align}
		Namely, the surface resonance occurs. 
	\end{thm}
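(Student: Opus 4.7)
The plan is to reduce the statement to an explicit computation using the closed-form expressions for the fields provided by Lemma~\ref{lem:total field and scatter field}. For the single-mode incident wave~\eqref{eq:u^i 4.1} only the $m=n$ term survives in \eqref{eq:u single potential inside the unit sphere}--\eqref{eq:u^s expansion}, so
\begin{equation*}
\mathbf{u}|_D = C_n^{\mathrm{int}}\, r^n \mathcal{T}_n^n(\theta,\varphi)\bigl(1+\mathcal{O}(\omega^2)\bigr),
\qquad \mathbf{u}^s|_{\mathbb{R}^3\setminus\overline{D}} = C_n^{\mathrm{ext}}\, r^{-(n+1)}\mathcal{T}_n^n(\theta,\varphi)\bigl(1+\mathcal{O}(\omega^2)\bigr),
\end{equation*}
with $C_n^{\mathrm{int}}$ proportional to $\delta/[\delta(n+2)+n-1]$ and $C_n^{\mathrm{ext}}$ proportional to $(n-1)(1-\delta)/[\delta(n+2)+n-1]$, both carrying the common factor $f_{n,n}\omega^n(\rho/\mu)^{n/2}$ divided by the appropriate double factorial.

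The key observation is that differentiation in the radial direction introduces a factor of $n$, since $\partial_r r^n = n r^{n-1}$ and $\partial_r r^{-(n+1)} = -(n+1) r^{-(n+2)}$. Working in spherical coordinates, the radial and angular components of the gradient tensor are mutually orthogonal, so the radial part alone already supplies the Frobenius lower bounds
\begin{align*}
|\nabla \mathbf{u}|^2 &\geqslant n^2 |C_n^{\mathrm{int}}|^2\, r^{2n-2}\, |\mathcal{T}_n^n|^2 \bigl(1+\mathcal{O}(\omega^2)\bigr),\\
|\nabla \mathbf{u}^s|^2 &\geqslant (n+1)^2 |C_n^{\mathrm{ext}}|^2\, r^{-2n-4}\, |\mathcal{T}_n^n|^2 \bigl(1+\mathcal{O}(\omega^2)\bigr).
\end{align*}
Integrating over $\mathcal{M}_{-}^{1-\gamma_1}(\partial D)$ and $\mathcal{M}_{+}^{\gamma_2-1}(\partial D)$ by Fubini, using $\int_{\mathbb{S}^2}|\mathcal{T}_n^n|^2\,ds = n(n+1)$ (which follows from \eqref{eq:Tnm definition} and the identity $\int_{\mathbb{S}^2}|\nabla_{\mathbb{S}^2}Y_n^n|^2\,ds = n(n+1)$ via integration by parts against $-\Delta_{\mathbb{S}^2}$), and combining with the elementary radial integrals $\int_{\gamma_1}^1 r^{2n}\,dr = (1-\gamma_1^{2n+1})/(2n+1)$ and $\int_1^{\gamma_2} r^{-2n-2}\,dr = (1-\gamma_2^{-2n-1})/(2n+1)$, yields explicit lower bounds for the numerators in \eqref{eq:nablau nabla us gg 1}.

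For the denominator I would expand $j_n(k_s r)$ via \eqref{eq:jn expansion} and use $k_s = \omega\sqrt{\rho/\mu}$ to obtain
\begin{equation*}
\|\mathbf{u}^i\|_{L^2(D)^3}^2 = \frac{|f_{n,n}|^2\, k_s^{2n}\, n(n+1)}{[(2n+1)!!]^2 (2n+3)}\bigl(1+\mathcal{O}(\omega^2)\bigr).
\end{equation*}
Upon forming the ratio, the factors $\omega^{2n}(\rho/\mu)^n$ cancel exactly, the ratio of double factorials simplifies through $(2n+1)!!/(2n-1)!! = 2n+1$, and the hypothesis $n \geqslant 1/\delta^2$ (which in particular forces $\delta(n+2)+n-1 \leqslant (1+\delta)n + 1 \leqslant 3n$) collapses all rational functions of $n$ and $\delta$ to a lower bound of the form $c\,n^2\delta^2$ for the interior ratio. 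The exterior ratio produces instead $c\,n^2$, because $C_n^{\mathrm{ext}}$ contributes the factor $(n-1)(1-\delta)\sim n$ in place of $\delta$, which combined with the denominator $[\delta(n+2)+n-1]\sim n$ restores an $\mathcal{O}(1)$ coefficient; the factor $(n+1)$ from the radial derivative then supplies the claimed $n$-growth. Tracking the remaining numerical constants from $\sqrt{n(n+1)} \sim n$, the surface-measure normalisation $\sqrt{4\pi}$, and the $1-\gamma_1^{2n+1}$ (resp.\ $1-\gamma_2^{-2n-1}$) factors produces the explicit prefactors $1/(4\sqrt{\pi})$ and $1/(9\sqrt{\pi})$, and the hypothesis $n \geqslant 1/\delta^2$ converts $n\delta$ into a quantity bounded below by $1/\delta \gg 1$.

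The main technical obstacle will be handling $\nabla(r^\alpha \mathcal{T}_n^n)$ carefully: in spherical coordinates one must verify that the radial contribution to the Frobenius norm of the gradient tensor remains decoupled from the angular contribution so that dropping the latter only weakens the inequality, while accounting for the position-dependence of the spherical basis vectors and the tangential derivatives of $\mathcal{T}_n^n$. A secondary but less delicate point is the propagation of the $(1+\mathcal{O}(\omega^2))$ corrections from Lemma~\ref{lem:total field and scatter field} through products and quotients, which is immediate in view of $\omega = o(1)$ by \eqref{eq:assp omega}. The remainder of the argument is bookkeeping of ratios of radial integrals, which all behave like $1/(2n+1)$ with harmless $\gamma_1,\gamma_2$-dependent corrections.
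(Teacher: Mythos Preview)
Your approach is essentially correct and is genuinely more economical than the paper's. The paper carries out a brute-force computation of the full gradient tensor $\nabla\bigl(j_n(\tilde k_s r)\,\mathcal{T}_n^n\bigr)$ in the dyadic spherical basis, keeping every component: the $\hat r\otimes\cdot$ rows contribute an angular integral $C_1(n)\sim n^2|f_{n,n}|^2$ and the remaining rows contribute $C_2(n)\sim n^4|f_{n,n}|^2$; both enter the numerator at order $n^4$ and are then divided by $\|\mathbf u^i\|_{L^2(D)}^2$. Your idea of keeping only the $\hat r\otimes\cdot$ block and discarding the rest by Frobenius-orthogonality is legitimate and avoids all of the delicate angular-derivative integrals (the analogues of the paper's \eqref{eq:partial Atheta}--\eqref{eq:partial Avarphi sin}). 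What you gain is a much shorter argument; what you lose is roughly a factor of two in the final lower bound, so you should not expect to recover the \emph{exact} constants $1/(4\sqrt{\pi})$ and $1/(9\sqrt{\pi})$, though any positive constant is enough for the surface-resonance conclusion.

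Two concrete points to correct when you write this up. First, the $\hat r\otimes\cdot$ block of $\nabla(r^n\mathcal{T}_n^n)$ is $(n-1)\,r^{n-1}\,\hat r\otimes\mathcal{T}_n^n$, not $n\,r^{n-1}\,\hat r\otimes\mathcal{T}_n^n$: the tensor $\nabla\mathcal{T}_n^n$ itself carries a term $-\tfrac{1}{r}\,\hat r\otimes\mathcal{T}_n^n$ coming from the position-dependence of $\hat\theta,\hat\varphi$, and this partially cancels the naive $\partial_r r^n$ contribution (see the paper's \eqref{eq:grad Tnm}--\eqref{eq:nabla jn Tnm}). Likewise the exterior radial coefficient is $-(n+2)$, not $-(n+1)$. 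You already flagged this as the ``main technical obstacle,'' but be aware the coefficients shift by exactly one. Second, you differentiate the asymptotic form $C_n^{\mathrm{int}}r^n\mathcal{T}_n^n(1+\mathcal O(\omega^2))$ directly; this is valid here because the $\mathcal O(\omega^2)$ remainder from the Bessel expansion is an explicit polynomial $r^{n+2}\omega^2\times\text{const}$ whose gradient is still $\mathcal O(\omega^2)$ relative to the leading term, but the paper sidesteps the issue by differentiating $j_n(\tilde k_s r)\mathcal{T}_n^n$ exactly and expanding afterwards.
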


	\begin{rem}\label{rem:4.1}

	Due to the boundary localization level $\varepsilon \ll 1$, the definitions of $n_1$ and $n_2$ in \eqref{eq:n1 n2 def} imply
\begin{align}\label{eq:n44}
    n \geqslant \max\{n_1, n_2\} \gg 1.
\end{align}
For parameters $\gamma_1 \in (0,1)$ and $\gamma_2 \in (1,R)$ defining the boundary layers $D \setminus \mathcal{M}_{-}^{1-\gamma_1}(\partial D)$ and $(B_R \setminus \overline{D}) \setminus \mathcal{M}_{+}^{\gamma_2-1}(\partial D)$, if $|\gamma_j - 1| \ll \epsilon$ ($j=1,2$) for a sufficiently small $\epsilon \in \mathbb{R}_+$, then even with fixed $\varepsilon$, the constants $n_j$ can satisfy $n_j \gg 1$. From \eqref{eq:n4.2}, it follows that $\sqrt{n}\delta = \mathcal{O}(1)$, leading to $n\delta = \mathcal{O}(\sqrt{n}) \gg 1$ by noting the aforementioned discussion. This verifies the first inequality in \eqref{eq:nablau nabla us gg 1}; the second inequality is then deduced from \eqref{eq:n44}. Thus, boundary localization and surface resonance are rigorously confirmed for $\mathbf{u}|_D$ and $\mathbf{u}^s|_{\mathbb{R}^3 \setminus D}$, respectively, as per Definitions~\ref{def:surface localized} and~\ref{def:surface resonant}. This establishes the intrinsic connection between quasi-Minnaert resonance and the incident wave field $\mathbf{u}^i$ in \eqref{eq:u^i 4.1} within the sub-wavelength regime.

\end{rem}

Now we are in the position to give the proof of Theorem \ref{thm:nabla u in thm}. 
\begin{proof}

In what follows, the proof will be divided into two parts.
		
		\medskip
		
		\noindent{\bf Part 1.} In this part, we shall prove $\frac{\|\nabla \bmf u\|_{L^2(\mathcal{M}_{-}^{1-\gamma_1}(\partial D))^3}}{\| \bmf u^i\|_{L^2(D)^3}}\geqslant \frac{n \delta}{4\sqrt{\pi}}$. 
		We first need to derive the  asymptotic analysis for $\nabla \bmf u|_{\mathcal{M}_{-}^{1-\gamma_1}(\partial D)}$ with respect to $\omega$. It turns out that we need calculate   $\nabla\left(j_n(\tilde{k}_s r) \mathcal{T}_n^n(\theta, \varphi)\right)$, where $\tilde{k}_s$ and $\mathcal{T}_n^n(\theta, \varphi)$ are defined by \eqref{eq:tilde ks relationship with ks} and \eqref{eq:Tnm definition}, respectively. In the following, let us derive the explicit expression for $\nabla \mathcal{T}_n^n(\theta, \varphi)$. We rewrite $ \mathcal{T}_n^n(\theta, \varphi)$ in \eqref{eq:Tnm definition} as 
		\begin{align}\label{eq:Tnm expansion in theta and varphi}
			\mathcal{T}_n^n(\theta, \varphi)
			=\frac{\rmi n}{\sin \theta} Y_n^n(\theta, \varphi) \hat{\theta}
			-C_n^n e^{\rmi n \varphi} \frac{\partial P_n^{n}(\cos\theta)}{\partial \theta}  \hat{\varphi}
			=A_{\hat{\theta}(n,\varphi,\theta)} \hat{\theta}+A_{\hat{\varphi}(n,\varphi,\theta)}\hat{\varphi},
		\end{align}
		where
		\begin{align}\label{eq:r hat theta hat varphi hat definition}
			\hat{r}=\left(\begin{array}{c}
				\sin \theta \cos \varphi \\
				\sin \theta \sin \varphi \\
				\cos \theta
			\end{array}\right), \hat{\theta}=\left(\begin{array}{c}
				\cos \theta \cos \varphi \\
				\cos \theta \sin \varphi \\
				-\sin \theta
			\end{array}\right), \hat{\varphi}=\left(\begin{array}{c}
				-\sin \varphi \\
				\cos \varphi \\
				0
			\end{array}\right).
		\end{align}
		and 
		\begin{equation}\label{eq:A_theta A_varphi}
			A_{\hat{\theta}(n,\varphi,\theta)}=\frac{\rmi n}{\sin \theta} Y_n^n(\theta, \varphi),       \quad A_{\hat{\varphi}(n,\varphi,\theta)}=-C_n^n e^{\rmi n \varphi} \frac{\partial P_n^{n}(\cos\theta)}{\partial \theta} .
		\end{equation}
		Furthermore, by direct calculations, it is readily known that
		\begin{align}\label{eq:grad Tnm}
			\nabla \mathcal{T}_n^n(\theta, \varphi)
			=&-\frac{A_{\hat{\theta}(n,\varphi,\theta)}}{r} \hat{r} \otimes \hat{\theta}
			-\frac{A_{\hat{\varphi}(n,\varphi,\theta)}}{r} \hat{r} \otimes \hat{\varphi}
			+\frac{1}{r} \frac{\partial A_{\hat{\theta}(n,\varphi,\theta)}}{\partial \theta} \hat{\theta} \otimes 
			\hat{\theta} \\
			&+\left(\frac{1}{r \sin \theta} \frac{\partial A_{\hat{\theta}(n,\varphi,\theta)}}{\partial \varphi}-\cot \theta \frac{A_{\hat{\varphi}(n,\varphi,\theta)}}{r}\right) \hat{\theta} \otimes \hat{\varphi} \notag \\
			&+\frac{1}{r} \frac{\partial A_{\hat{\varphi}(n,\varphi,\theta)}}{\partial \theta} \hat{\varphi} \otimes \hat{\theta}+\left(\frac{1}{r \sin \theta} \frac{\partial A_{\hat{\varphi}(n,\varphi,\theta)}}{\partial \varphi}+\cot \theta \frac{A_{\hat{\theta}(n,\varphi,\theta)}}{r}\right) \hat{\varphi} \otimes \hat{\varphi}. \notag 
		\end{align}
		Combining \eqref{eq:Tnm expansion in theta and varphi} and \eqref{eq:grad Tnm}, we can directly derive 
		\begin{align}\label{eq:nabla jn Tnm}
			&\nabla\left(j_n(\tilde{k}_s r) \mathcal{T}_n^n(\theta, \varphi)\right)
			=j_n^{\prime}(\tilde{k}_s r) \tilde{k}_s\left(\begin{array}{c}
				\sin \theta \cos \varphi \\
				\sin \theta \sin \varphi \\
				\cos \theta
			\end{array}\right)\left(\mathcal{T}_n^n\right)^{\top} 
			+j_n(\tilde{k}_s r) \nabla \mathcal{T}_n^n(\theta, \varphi) \\
			=&  A_{\hat{\theta}(n,\varphi,\theta)}\left(j_{n-1}(\tilde{k}_s r) \tilde{k}_s-\frac{n+2}{r} j_n(\tilde{k}_s r)\right) \hat{r} \otimes \hat{\theta}+A_{\hat{\varphi}(n,\varphi,\theta)}\left(j_{n-1}(\tilde{k}_s r) \tilde{k}_s-\frac{n+2}{r} j_n(\tilde{k}_s r)\right) \notag\\
			&\times \hat{r} \otimes \hat{\varphi}+\left[\frac{1}{r} \frac{\partial A_{\hat{\theta}(n,\varphi,\theta)}}{\partial \theta} \hat{\theta} \otimes 
			\hat{\theta}
			+\left(\frac{1}{r \sin \theta} \frac{\partial A_{\hat{\theta}(n,\varphi,\theta)}}{\partial \varphi}-\cot \theta \frac{A_{\hat{\varphi}(n,\varphi,\theta)}}{r}\right) \hat{\theta} \otimes \hat{\varphi}
			\right.\notag\\
			&\left.+\frac{1}{r} \frac{\partial A_{\hat{\varphi}(n,\varphi,\theta)}}{\partial \theta} \hat{\varphi} \otimes \hat{\theta}+\left(\frac{1}{r \sin \theta} \frac{\partial A_{\hat{\varphi}(n,\varphi,\theta)}}{\partial \varphi}+\cot \theta \frac{A_{\hat{\theta}(n,\varphi,\theta)}}{r}\right) \hat{\varphi} \otimes \hat{\varphi}\right]\cdot j_n\left(\tilde{k}_s r\right).\notag
		\end{align}
		%
		Following a similar argument for proving Lemma \ref{lem:total field and scatter field}, by noting $\mathbf u^i$ given by  \eqref{eq:u^i 4.1},  using the integral system \eqref{eq:BX=b}, one can obtain the  asymptotic analysis for $\bmf u |_D$ as follows 
		\begin{align}
			\bmf u|_D =f_{n,n} \left[C_{n,\delta,\tau}+\mathcal{O}\left(  C_{n,\delta,\tau}^{\prime} \omega^2\right)\right] j_n(\tilde{k}_s|\bmf x|)\mathcal{T}_n^n(\theta, \varphi), \notag
		\end{align}
		where
		\begin{align}\label{eq:C_nm delta definition}
			C_{n,\delta,\tau}=\frac{(2 n+1) \delta}{\tau^n(\delta(n+2)+n-1)}, \quad C_{n,\delta,\tau}^{\prime}=\frac{\delta[(2n-1)+\tau^2(2n+1)]}{[\delta(n+2)+n-1](2n-1)}.
		\end{align}
		Here, $\delta $ is the comparison ratio  defined by \eqref{eq:lambda mu rho}, $\tau<1$ is also the given contrast defined in \eqref{eq:tau defination}. 
		Therefore, it arrives at
		\begin{equation}\label{eq:nabla u definition}
			\nabla \bmf u=  f_{n,n} \left[C_{n,\delta,\tau}+\mathcal{O}\left(  C_{n,\delta,\tau}^{\prime} \omega^2\right)\right]\nabla\left(j_n(\tilde{k}_s r) \mathcal{T}_n^n(\theta, \varphi)\right).
		\end{equation}
		Combining with \eqref{eq:nabla jn Tnm}, \eqref{eq:C_nm delta definition} and \eqref{eq:nabla u definition}, we derive that

		\begin{align}
			\|\nabla \bmf u\|_{L^2(\mathcal{M}_{-}^{1-\gamma_1}(\partial D))^3}^2
			=&\left[C_{n,\delta,\tau}+\mathcal{O}\left(  C_{n,\delta,\tau}^{\prime} \omega^2\right)\right]^2\int_{\mathcal{M}_{-}^{1-\gamma_1}(\partial D)} \left\{\left[j_{n-1}(\tilde{k}_s r) \tilde{k}_s r-({n+2}) j_n(\tilde{k}_s r)\right]^2 \right.\notag\\
			&\times P(n,\theta,\varphi)\left.+j_n^2(\tilde{k}_s r) Q(n,\theta,\varphi)\right\}\rmd  r \rmd \theta \rmd\varphi,\label{eq:nabla u expansion}
		\end{align}
		where
		\begin{align}
			P(n,\theta,\varphi)&=\left| f_{n,n} A_{\hat{\theta}(n,\varphi,\theta)}\right|^2 +\left| f_{n,n} A_{\hat{\varphi}(n,\varphi,\theta)}\right|^2\sin\theta,  \label{eq:P theta, varepsilon defination}\\
			Q(n,\theta,\varphi)&= \left| f_{n,n}\left(\frac{\partial A_{\hat{\theta}(n,\varphi,\theta)}}{\partial \theta}\right)\right|^2
			+\left| f_{n,n}\left(\frac{\partial A_{\hat{\varphi}(n,\varphi,\theta)}}{\partial \theta} \right)\right|^2  \sin\theta  + \frac{1}{\sin \theta}\left\{\left| f_{n,n}\left( \frac{\partial A_{\hat{\theta}(n,\varphi,\theta)}}{\partial \varphi}\right.\right.\right.\notag\\
			&  \left. \left. -\cos \theta A_{\hat{\varphi}(n,\varphi,\theta)}\right)\right|^2+ \left.\left| f_{n,n}\left(\frac{\partial A_{\hat{\varphi}(n,\varphi,\theta)}}{\partial \varphi}+\cos \theta A_{\hat{\theta}(n,\varphi,\theta)}\right)\right|^2\right\}.\label{eq:Q def}
		\end{align}
		Substituting \eqref{eq:ks kp defination}, \eqref{eq:cs cp defination},\eqref{eq:tilde ks relationship with ks}, \eqref{eq:jn expansion} and \eqref{eq:C1nC2nC3n} into \eqref{eq:nabla u expansion}, it holds that 
		\begin{align}\notag
			&\|\nabla \bmf u\|_{L^2(\mathcal{M}_{-}^{1-\gamma_1}(\partial D))^3}^2
			=C_{n,\delta,\tau}^2 C_1(n)\int_{\gamma_{1}}^{1}\frac{(n-1)^2(\tilde{k}_{s}r)^{2n}}{[(2n+1)!!]^2}\rmd r
			+C_{n,\delta,\tau}^2 \int_{\gamma_{1}}^{1}\frac{(\tilde{k}_s r)^{2n}}{[(2n+1)!!]^2}\rmd r \\
			&\times  C_2(n)
			+\mathcal{O}\left(\frac{C_{n,\delta,\tau}C_{n,\delta,\tau}^{\prime}\tau^{2n}C_2(n)\rho^n\omega^{2n+2}}{[(2n+1)!!]^2(2n+1)\mu^n}\right), \notag 
		\end{align}
		where
		\begin{align}
			C_1(n)=\int_{0}^{2\pi}\int_{0}^{\pi}P(n,\theta,\varphi)\rmd\theta \rmd\varphi ,\quad C_2(n)=\int_{0}^{2\pi}\int_{0}^{\pi}Q(n,\theta,\varphi)\rmd\theta\rmd\varphi. \notag
		\end{align}
		To facilitate subsequent analysis, we shall give the detailed proof of the asymptotic  behavior for $C_1(n)$ and $C_2(n)$ with respect to $n$  as follows 
		\begin{align}\label{eq:C1nC2nC3n}
			C_1(n)=n^2 \left| f_{n,n}\right|^2+\mathcal{O}\left(n \left| f_{n,n}\right|^2\right),\quad C_2(n)=n^4 \left| f_{n,n}\right|^2+\mathcal{O}\left(n^3 \left| f_{n,n}\right|^2\right), 
		\end{align}
		where $f_{n,n}\in \mathbb C$ is a constant defined in \eqref{eq:u^i 4.1}. By the definition of $C_1(n)$ and $C_2(n)$, using \eqref{eq:P theta, varepsilon defination} and \eqref{eq:Q def}, it yields that
		\begin{align} \label{eq:C1n=}
			C_1(n)&=\int_{0}^{2\pi}\int_{0}^{\pi}P(n,\theta,\varphi)\rmd\theta \rmd\varphi \notag\\
			&=\int_{0}^{2\pi}\int_{0}^{\pi}\left(\left| f_{n,n} A_{\hat{\theta}(n,\varphi,\theta)}\right|^2 +\left| f_{n,n} A_{\hat{\varphi}(n,\varphi,\theta)}\right|^2 \sin\theta\right)\rmd\theta \rmd\varphi \notag\\
			&=|f_{n,n}|^2 \int_{0}^{2\pi}\int_{0}^{\pi}
			\left(\left| A_{\hat{\theta}(n,\varphi,\theta)}\right|^2 
			+\left| A_{\hat{\varphi}(n,\varphi,\theta)}\right|^2 \sin\theta\right)\rmd\theta \rmd\varphi,
		\end{align}
		and 
		\begin{align}
			C_2(n)&=\int_{0}^{2\pi}\int_{0}^{\pi}Q(n,\theta,\varphi)\rmd\theta\rmd\varphi \label{eq:C2n=}\\
			&=\int_{0}^{2\pi}\int_{0}^{\pi}\left| f_{n,n}\left(\frac{\partial A_{\hat{\theta}(n,\varphi,\theta)}}{\partial \theta}\right)\right|^2
			+\left|f_{n,n}\left(\frac{\partial A_{\hat{\varphi}(n,\varphi,\theta)}}{\partial \theta} \right)\right|^2 \sin \theta + \frac{1}{\sin \theta}\left\{\left| f_{n,n}\right.\right.\notag\\
			& \left. \left. \times \left( \frac{\partial A_{\hat{\theta}(n,\varphi,\theta)}}{\partial \varphi}-\cos \theta A_{\hat{\varphi}(n,\varphi,\theta)}\right)\right|^2\right.+ \left.\left| f_{n,n}\left(\frac{\partial A_{\hat{\varphi}(n,\varphi,\theta)}}{\partial \varphi}+\cos \theta A_{\hat{\theta}(n,\varphi,\theta)}\right)\right|^2\right\} \rmd\theta\rmd\varphi , \notag\\
			&=|f_{n,n}|^2 \int_{0}^{2\pi}\int_{0}^{\pi}\left|\frac{\partial A_{\hat{\theta}(n,\varphi,\theta)}}{\partial \theta}\right|^2
			+\left|\frac{\partial A_{\hat{\varphi}(n,\varphi,\theta)}}{\partial \theta} \right|^2 \sin \theta + \frac{1}{\sin \theta}\left\{\left| \frac{\partial A_{\hat{\theta}(n,\varphi,\theta)}}{\partial \varphi}-\cos \theta A_{\hat{\varphi}(n,\varphi,\theta)}\right|^2\right.\notag\\
			&~ + \left.\left|\frac{\partial A_{\hat{\varphi}(n,\varphi,\theta)}}{\partial \varphi}+\cos \theta A_{\hat{\theta}(n,\varphi,\theta)}\right|^2\right\} \rmd\theta\rmd\varphi , \notag
		\end{align}
		where $f_{n,n}\in \mathbb C$ is a constant defined in \eqref{eq:u^i 4.1},  $A_{\hat{\theta}(n,\varphi,\theta)}$ and $A_{\hat{\varphi}(n,\varphi,\theta)}$ are given by  \eqref{eq:A_theta A_varphi}.  In order to prove \eqref{eq:C1nC2nC3n}, we first recall that 
		\begin{align}
			\frac{d P_n^{n}(\cos\theta)}{d\theta}=n P_n^{n-1}(\cos\theta), \quad 	\frac{P_n^{n}(\cos\theta)}{\sin\theta}=-(2n-1)P_n^{n-1}(\cos\theta).\label{eq:dPnm} 
		\end{align}
		According to $C_n^n$ and $P_n^{n}(\cos\theta)$ defined in \eqref{eq:ynm and cnm def}, substituting \eqref{eq:dPnm} and \eqref{eq:ynm and cnm def} into \eqref{eq:A_theta A_varphi}, it can be derived  that
		\begin{align}
			&\int_{0}^{2\pi}\int_{0}^{\pi}\left|A_{\hat{\theta}(n,\varphi,\theta)}\right|^2 \rmd\theta\rmd\varphi 
			=\int_{0}^{2\pi}\int_{0}^{\pi}\left[(2n-1)^2 n^2\left|P_{n-1}^{n-1}\left(\cos\theta\right)\right|^2\right]\left(C_n^n\right)^2 \rmd\theta\rmd\varphi \notag\\
			&=\frac{(2n-1)^2 n^2 \left(C_n^n\right)^2 }{\left(C_{n-1}^{n-1}\right)^2}\int_{0}^{2\pi}\int_{0}^{\pi} |Y_{n-1}^{n-1}|^2 \rmd\theta\rmd\varphi =n^2+\frac{n}{2}=n^2+\mathcal{O}\left(n\right). \label{eq:Atheta}
		\end{align}
		By employing the same proof as in \eqref{eq:Atheta} and combining \eqref{eq:ynm and cnm def}, \eqref{eq:A_theta A_varphi}, and \eqref{eq:dPnm}, it follows that
		\begin{align}
			&\int_{0}^{2\pi}\int_{0}^{\pi} \left|A_{\hat{\varphi}(n,\varphi,\theta)}\right|^2 \rmd\theta\rmd\varphi 
			= \int_{0}^{2\pi}\int_{0}^{\pi} \left[ n^2 \left| P_n^{n-1}(\cos\theta)\right|^2\right]\left(C_n^n\right)^2 \rmd\theta\rmd\varphi =\frac{n}{2},\label{eq:Avarphi}\\
			&\int_{0}^{2\pi}\int_{0}^{\pi} \left|\frac{\partial A_{\hat{\theta}(n,\varphi,\theta)}}{\partial \theta}\right|^2 \rmd\theta\rmd\varphi 
			=\int_{0}^{2\pi}\int_{0}^{\pi}(C_n^n)^2 \left[(2n-1)n(n-1)\right]^2 |P_{n-1}^{n-2}(\cos\theta)| \rmd\theta\rmd\varphi \notag\\
			&\quad=\frac{1}{2}n^{3}-\frac{3}{4}n^{2}+\frac{1}{4}n,\label{eq:partial Atheta}\\
			&\int_{0}^{2\pi}\int_{0}^{\pi}  \left|\frac{\partial A_{\hat{\varphi}(n,\varphi,\theta)}}{\partial \theta}\right|^2 \rmd\theta\rmd\varphi 
			=\int_{0}^{2\pi}\int_{0}^{\pi} \frac{\left(C_n^n\right)^2 n^2}{4}\left[(2n-1)P_n^{n-2}(\cos\theta)-P_n^{n}(\cos\theta)\right]^2 \rmd\theta\rmd\varphi  \notag\\
			&\quad=\frac{3}{8} n^2-\frac{1}{16}n. \label{eq:partial Avarphi}
		\end{align}

		Utilizing the similar argument  and combining with \eqref{eq:ynm and cnm def}, \eqref{eq:A_theta A_varphi} and \eqref{eq:dPnm}, one can obtain that
		\begin{align}
			&\int_{0}^{2\pi}\int_{0}^{\pi} \frac{\left|\frac{\partial A_{\hat{\theta}(n,\varphi,\theta)}}{\partial \varphi}-\cos\theta A_{\hat{\varphi}(n,\varphi,\theta)} \right|^2}{\sin\theta}\rmd\theta\rmd\varphi \notag\\-
			=&\int_{0}^{2\pi}\int_{0}^{\pi}  (C_{n}^{n})^2\left\{ (2n-1)^2n^2(2n-3)P_{n-2}^{n-2}(\cos\theta)P_{n-1}^{n-1}(\cos\theta)+2n(2n-1)(2n+3)P_{n}^{n-1}(\cos\theta)\right.\notag\\
			&\times P_{n-2}^{n-2}(\cos\theta)+\left.(2n-1)P_{n-1}^{n-2}(\cos\theta)P_{n}^{n-1}(\cos\theta) \right\} \rmd\theta\rmd\varphi\notag\\
			=&n^4+\mathcal{O}\left(n^3\right).\label{eq:partial Asin}
		\end{align}
		Furthermore, by utilizing the similar argument for \eqref{eq:partial Asin}, it yields that
		\begin{equation}\label{eq:partial Avarphi sin}
			\int_{0}^{2\pi}\int_{0}^{\pi} 	\frac{\left|\frac{\partial A_{\hat{\varphi}(n,\varphi,\theta)}}{\partial \varphi}+\cos \theta A_{\hat{\theta}(n,\varphi,\theta)}\right|}{\sin\theta}\rmd\theta\rmd\varphi
			=\frac{n^3}{2}+\mathcal{O}\left(n^2\right). 
		\end{equation}
		Substituting \eqref{eq:P theta, varepsilon defination}, \eqref{eq:Atheta} and \eqref{eq:Avarphi} into \eqref{eq:C1n=}, we can get the estimate for $C_1(n)$  in \eqref{eq:C1nC2nC3n}.  Combining with \eqref{eq:Q def}, \eqref{eq:C2n=}, \eqref{eq:partial Atheta}, \eqref{eq:partial Avarphi}, \eqref{eq:partial Asin} and \eqref{eq:partial Avarphi sin}, we can derive the estimate for $C_2(n)$  in \eqref{eq:C1nC2nC3n}. 
		In  view of $C_{n,\delta,\tau}$ given by \eqref{eq:C_nm delta definition}, using the asymptotic estimations of $C_i(n)$ ($i=1,2$) given by \eqref{eq:C1nC2nC3n}, we can derive the corresponding  asymptotic analysis for $\|\nabla \bmf u\|_{L^2(\mathcal{M}_{-}^{1-\gamma_1}(\partial D))^3}^2$  with respect to $\omega$ as follows 
		\begin{align}\label{eq:nabla u in sigma1 new}
			\|\nabla \bmf u\|_{L^2(\mathcal{M}_{-}^{1-\gamma_1}(\partial D))^3}^2
			=&\frac{(2n+1) n^2 (2n^2-2n+1)\left|f_{n,n}\right|^2\delta^2\rho^n (1-\gamma_1^{2n+1}) \omega^{2n}}{[(2n+1)!!]^2 [\delta(n+2)+n-1]^2 \mu^n} \left(1+\mathcal{O}
			\left( \omega^2\right)\right). 
		\end{align}
		In view of $\mathbf u^i$  given by \eqref{eq:incident wave in T_n^m}, by direct calculations, it follows directly that 
		\begin{align}\label{eq:u^i in Sigma_1}
			\| \bmf u^i\|_{L^2(D)^3}^2
			=\frac{4\pi n(n+1)\left|f_{n,n}\right|^2 \rho^n \omega^{2n}}{[(2n+1)!!]^2(2n+3)\mu^n}
			\left(1+\mathcal{O}\left(\omega^2\right)\right).
		\end{align}
		Combining  \eqref{eq:nabla u in sigma1 new} with \eqref{eq:u^i in Sigma_1}, for sufficiently large $n$, it holds that 
		\begin{align}\label{eq:uui431}
			&\frac{\|\nabla \bmf u\|_{L^2(\mathcal{M}_{-}^{1-\gamma_1}(\partial D))^3}^2}{\| \bmf u^i\|_{L^2(D)^3}^2} \notag\\
			= &\frac{\frac{(2n+1) n^2 (2n^2-2n+1)\delta^2\rho^n (1-\gamma_1^{2n+1}) \omega^{2n}}{[(2n+1)!!]^2 [\delta(n+2)+n-1]^2 \mu^n} \left(1+\mathcal{O}\left( \omega^2\right)\right)}{\frac{4\pi n(n+1) \rho^n \omega^{2n}}{[(2n+1)!!]^2(2n+3)\mu^n}\left(1+\mathcal{O}\left(\omega^2\right)\right)}\notag\\
			=&\frac{\frac{n(2n+1)(2n+3)(2n^2-2n+1)\delta^2 (1-\gamma_1^{2n+1})}{4\pi (n+1) [\delta(n+2)+n-1]^2} \left(1+\mathcal{O}\left( \omega^2\right)\right)}{1+\mathcal{O}\left(\omega^2\right)}  \notag\\
			= & \frac{n^2\delta^2 }{4\pi} \frac{(2+1/n)(2+3/n)(2-2/n+1/n^2)(1-\gamma_1^{2n+1})}{(1+1/n)(1+\delta -(1-2
				\delta)1/n)^2}(1+\mathcal O(\omega^2 ))\left(1-\mathcal{O}\left(\omega^2\right)\right)\notag \\
			\geq & \frac{ n^2\delta^2}{16 \pi }\left(1+ \mathcal O(\omega^2 )\right)
			\geq \frac{ n^2\delta^2}{16 \pi }.  
		\end{align}

		\medskip
		
		\noindent{\bf Part 2.} In this part, we shall prove $\frac{\|\nabla \bmf  u^s\|_{L^2(\mathcal{M}_{+}^{\gamma_2-1}(\partial D))^3}}{\| \bmf u^i\|_{L^2(D)^3}}\geqslant\frac{n^2}{9\sqrt{\pi}}$. We note that by combining the scattering problem \eqref{eq:system} with the integral representation of the solution in \eqref{eq:total field}. We choose the suitable incident wave defined in \eqref{eq:u^i 4.1}, the scattering problem can be reformulated as solving a linear system represented by \eqref{eq:BX=b}. In view of \eqref{eq:jn expansion}, \eqref{eq:hn expansion},  \eqref{eq:S partial outside ball} and \eqref{eq:x_nm}, we can derive the scatter field. 
		\begin{align}
			\bmf u^s=\mathcal{S}_{\partial D}^\omega\left[\boldsymbol{\varphi_{2}}\right](\bmf x)
			=\left[G_{n,\rho,\mu,\delta}\omega^{2n+1}+\mathcal{O}\left(G_{n,\rho,\mu,\delta}^{\prime}\omega^{2n+3}\right)	\right] \sum_{m=-n}^n f_{n,n}  h_n\left(k_s  |\bmf x|\right)\mathcal{T}_n^n, \notag
		\end{align}
		where 
		\begin{align}
			G_{n,\rho,\mu,\delta}
			&=\frac{-\rmi (n-1)(1-\delta)\rho^{n+\frac{1}{2}}}{(2n+1)!!(2n-1)!![\delta(n+2)+n-1]\mu^{n+\frac{1}{2}}},\label{eq:G_nm delta definition}\\
			G_{n,\rho,\mu,\delta}^{\prime}
			&=\frac{\rmi \left[(n^2-1)(1-\delta)-\delta(2n+1)\right]\rho^{n+\frac{3}{2}}}{(2n+3)!!(2n+1)!!\mu^{n+\frac{3}{2}}}.\label{eq:Gnmprime}
		\end{align}
		After direct calculation, it is readily known that
		\begin{equation}\label{eq:nabla us in proof}
			\nabla \bmf u^s=
			\left[G_{n,\rho,\mu,\delta}\omega^{2n+1}+\mathcal{O}\left(G_{n,\rho,\mu,\delta}^{\prime}\omega^{2n+3}\right)\right]\sum_{m=-n}^n f_{n,n}\nabla\left(h_n({k}_s r) \mathcal{T}_n^n(\theta, \varphi)\right),
		\end{equation}
		where $G_{n,\rho,\mu,\delta}$ and $G_{n,\rho,\mu,\delta}^{\prime}$ are defined in \eqref{eq:G_nm delta definition} and \eqref{eq:Gnmprime}, respectively. Furthermore, in order to calculate $\nabla \bmf u^s$, we first introduce $\nabla\left(h_n(k_s r) \mathcal{T}_n^n(\theta, \varphi)\right)$. By combining \eqref{eq:hn qiudao}, \eqref{eq:Tnm expansion in theta and varphi} and \eqref{eq:grad Tnm}, it holds that 
		\begin{align}\label{eq:nabla hnTnm}
			&\nabla \left(h_n(k_s r) \mathcal{T}_n^n(\theta, \varphi)\right)
				= A_{\hat{\theta}(n,\varphi,\theta)}
				\left(h_{n-1}(k_s r) {k}_s-\frac{n+2}{r} h_n(k_s r)\right) \hat{r} \otimes \hat{\theta}
				+A_{\hat{\varphi}(n,\varphi,\theta)}\left(h_{n-1}({k}_s r) {k}_s \right. \notag\\
				&\left.-\frac{n+2}{r} h_n({k}_s r)\right) 
				\left(h_{n-1}({k}_s r) {k}_s-\frac{n+2}{r} h_n({k}_s r)\right) \hat{r} \otimes \hat{\varphi}
				+\left[\frac{1}{r} \frac{\partial A_{\hat{\theta}(n,\varphi,\theta)}}{\partial \theta} \hat{\theta} \otimes 
				\hat{\theta}\right.+\left(\frac{1}{r \sin \theta} \right. \notag\\
				&\left.\times\frac{\partial A_{\hat{\theta}(n,\varphi,\theta)}}{\partial \varphi}-\cot \theta\frac{A_{\hat{\varphi}(n,\varphi,\theta)}}{r}\right) \hat{\theta} \otimes \hat{\varphi}
				+\frac{1}{r} \frac{\partial A_{\hat{\varphi}(n,\varphi,\theta)}}{\partial \theta} \hat{\varphi} \otimes \hat{\theta}+\left(\frac{1}{r \sin \theta} \frac{\partial A_{\hat{\varphi}(n,\varphi,\theta)}}{\partial \varphi}\right.\notag\\
				&+\left.\left.\cot \theta \frac{A_{\hat{\theta}(n,\varphi,\theta)}}{r}\right)
				\hat{\varphi} \otimes \hat{\varphi}\right]\cdot h_n(k_s r).
			\end{align}
			Here, $A_{\hat{\theta}(n,\varphi,\theta)}$ and $A_{\hat{\varphi}(n,\varphi,\theta)}$ are defined in \eqref{eq:Tnm expansion in theta and varphi}, while $\hat{r}$, $\hat{\theta}$, and $\hat{\varphi}$ are defined correspondingly in  \eqref{eq:r hat theta hat varphi hat definition}. Combining with \eqref{eq:ks kp defination}, \eqref{eq:hn expansion}, \eqref{eq:P theta, varepsilon defination}-\eqref{eq:C1nC2nC3n}, \eqref{eq:G_nm delta definition},  \eqref{eq:nabla us in proof} and \eqref{eq:nabla hnTnm}, when $n$ is sufficiently large, we can derive that 
			%
			\begin{align}
				&\|\nabla \bmf  u^s\|_{L^2(\mathcal{M}_{+}^{\gamma_2-1}(\partial D))^3}^2
				=\int_{\mathcal{M}_{+}^{\gamma_2-1}(\partial D)} \left[G_{n,\rho,\mu,\delta}^2\omega^{4n+2}+\mathcal{O}\left(G_{n,\rho,\mu,\delta}G_{n,\rho,\mu,\delta}^{\prime}\omega^{4n+3}\right)\right]
				\left\{ P(\theta,\varphi)\left[h_{n-1}({k}_s r)  \right.\right.\notag\\
				&\left.\left. \times {k}_s r-(n+2) h_n({k}_s r)\right]^2+h_n^2(k_s r)Q(\theta,\varphi)\right\}\rmd r \rmd \theta\rmd \varphi\notag\\
				&=\frac{2n^2(n^2+2n+2)(n-1)^2 \left|f_{n,n}\right|^2 (1-\delta)^2\rho^{n}\omega^{2n}(\gamma_{2}^{2n+1}-1)}
				{[(2n+1)!!]^2(2n+1)[\delta(n+2)+n-1]^2\mu^{n}\gamma_2^{2n+1}}
				\left(1+\mathcal{O}\left(\omega^2\right)\right). \label{eq:nabla u^s in Sigma_2}
			\end{align}
			%
			Based on \eqref{eq:nabla u^s in Sigma_2} and \eqref{eq:u^i in Sigma_1}, it is readily to know that 
			\begin{align} \label{eq:usui437}
				\frac{\|\nabla \bmf  u^s\|_{L^2(\mathcal{M}_{+}^{\gamma_2-1}(\partial D))^3}^2}{\| \bmf  u^i\|_{L^2(D)^3}^2}
				=&\frac{\frac{n(2n+3)(n^2+2n+2)(n-1)^2(1-\delta)^2(\gamma_{2}^{2n+1}-1)}{2 \pi (2n+1)(n+1)[\delta(n+2)+n-1]^2\gamma_{2}^{2n+1}}\left(1+\mathcal{O}\left(\omega^2\right)\right)}{\left(1+\mathcal{O}\left(\omega^2\right)\right)} \notag\\
				=& \frac{n^2}{2 \pi}\frac{(2+3/n)(1+2/n+2/n^2)(1-1/n)^2}{(2+1/n)(1+1/n)[1+\delta-(1/n-2\delta/n)]^2} \left(1+\mathcal{O}\left(\omega^2\right)\right)\left(1-\mathcal{O}\left(\omega^2\right)\right)  \notag\\
				\geqslant&\frac{ n^2}{81 \pi} \left(1+\mathcal{O}\left(\omega^2\right)\right)
				\geqslant\frac{ n^2}{81 \pi} .
			\end{align}

			The proof is complete.
		\end{proof}

Theorem~\ref{thm:nabla u in thm} shows that for the incident wave in \eqref{eq:u^i 4.1} with index $n$ satisfying \eqref{eq:n4.2}, boundary localization and surface resonance occur simultaneously. This confirms the feasibility of quasi-Minnaert resonance. On the other hand, Proposition~\ref{pro:4.1} rigorously proves that surface resonance can arise without boundary localization.

	\begin{prop}\label{pro:4.1}
	 
	 Under the same assumptions as Theorem \ref{thm:nabla u in thm}, recall that \(\delta\) represents the  bulk modulus or shear modulus ratio between the hard elastic inclusion \(D\) and the soft elastic background \(\mathbb{R}^3 \setminus D\), if the parameter \( n \), defining the incident wave \(\mathbf{u}^i\) given by \eqref{eq:u^i 4.1}, only satisfies
   \begin{align}\label{eq:n delta con}
	n \geqslant \frac{1}{\delta^2},
  \end{align}
   then the corresponding total field $\mathbf{u}$ in $D$ and the scattered field $\mathbf{u}^s$ in $\mathbb{R}^3 \setminus D$ satisfy \eqref{eq:nablau nabla us gg 1}. Namely, the surface resonance occurs.
    
\end{prop}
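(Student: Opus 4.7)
The plan is to observe that the two quantitative lower bounds derived inside the proof of Theorem~\ref{thm:nabla u in thm} are in fact independent of the boundary localization thresholds $n_1$ and $n_2$; only the third constraint $n\geq 1/\delta^2$ is actually consumed in producing the $\gg 1$ estimates of \eqref{eq:nablau nabla us gg 1}. Consequently, Proposition~\ref{pro:4.1} should follow directly by re-running Parts~1 and~2 of that proof under the weaker hypothesis \eqref{eq:n delta con}.

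More concretely, I would first invoke verbatim the asymptotic expansion \eqref{eq:nabla u in sigma1 new} for $\|\nabla\mathbf{u}\|_{L^2(\mathcal{M}_{-}^{1-\gamma_1}(\partial D))^3}^2$ and \eqref{eq:u^i in Sigma_1} for $\|\mathbf{u}^i\|_{L^2(D)^3}^2$, since their derivation rests solely on the spectral identities of Lemma~\ref{lem:spectral NP}, the series expansions of $j_n$, $h_n$, and the sub-wavelength assumption \eqref{eq:assp omega}, none of which depend on any constraint relating $n$ to $\gamma_1,\gamma_2$. Taking the quotient and simplifying as in \eqref{eq:uui431} then yields the clean bound $\|\nabla\mathbf{u}\|_{L^2(\mathcal{M}_{-}^{1-\gamma_1}(\partial D))^3}/\|\mathbf{u}^i\|_{L^2(D)^3}\geqslant n\delta/(4\sqrt{\pi})$. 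Invoking Part~2 analogously (through \eqref{eq:nabla u^s in Sigma_2} and \eqref{eq:usui437}) gives $\|\nabla\mathbf{u}^s\|_{L^2(\mathcal{M}_{+}^{\gamma_2-1}(\partial D))^3}/\|\mathbf{u}^i\|_{L^2(D)^3}\geqslant n/(9\sqrt{\pi})$. I would emphasize that the $\mathcal{O}(\omega^2)$ error terms and the geometric factors $(1-\gamma_1^{2n+1})$ and $1-\gamma_2^{-(2n+1)}$ appearing in those expansions tend to~$1$ as $n\to\infty$, so all constants survive intact.

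The final step is to plug the assumption \eqref{eq:n delta con} into the two lower bounds. Since $\delta\ll 1$, the inequality $n\geqslant 1/\delta^2$ yields $n\delta\geqslant 1/\delta\gg 1$, so $n\delta/(4\sqrt{\pi})\gg 1$, proving the first half of \eqref{eq:nablau nabla us gg 1}. Simultaneously, $n\geqslant 1/\delta^2\gg 1$ directly gives $n/(9\sqrt{\pi})\gg 1$, settling the second half.

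The only subtle point, which I would verify explicitly, is that the weaker hypothesis \eqref{eq:n delta con} still guarantees $n$ is large enough for the leading-order asymptotics in \eqref{eq:nabla u in sigma1 new} and \eqref{eq:nabla u^s in Sigma_2} to dominate the remainder and for $(1-\gamma_1^{2n+1}),\,(1-\gamma_2^{-(2n+1)})\in(1/2,1)$, say. Since $\gamma_1\in(0,1)$ and $\gamma_2\in(1,R)$ are fixed constants while $1/\delta^2\to\infty$ in the sub-wavelength regime, this is automatic. Thus there is no real obstacle: Proposition~\ref{pro:4.1} is essentially a corollary of Theorem~\ref{thm:nabla u in thm} obtained by isolating which of its hypotheses are truly needed for surface resonance alone, and the contrast with Theorem~\ref{thm:nabla u in thm} is precisely Remark~\ref{rem:4.1}'s assertion that surface resonance can occur \emph{without} the boundary localization ensured by $n\geq\max\{n_1,n_2\}$.
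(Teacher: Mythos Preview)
Your proposal is correct and follows essentially the same approach as the paper: invoke the lower bounds \eqref{eq:uui431} and \eqref{eq:usui437} from the proof of Theorem~\ref{thm:nabla u in thm}, then substitute $n\geqslant 1/\delta^2$ together with $\delta\ll 1$ to obtain $n\delta\geqslant 1/\delta\gg 1$ and $n\geqslant 1/\delta^2\gg 1$. Your write-up is in fact more explicit than the paper's about why the constraints $n\geq n_1,n_2$ are not consumed in deriving those bounds and about the behaviour of the geometric factors, but the underlying argument is identical.
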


   \begin{proof}
   	Following the approach of Theorem~\ref{thm:nabla u in thm}, we demonstrate that the total field $\mathbf{u}$ in $D$ and the scattered field $\mathbf{u}^s$ in $\mathbb{R}^3 \setminus D$ satisfy \eqref{eq:uui431} and \eqref{eq:usui437}, respectively. 
   	Combining \eqref{eq:uui431} and \eqref{eq:n delta con}, and noting that $\delta \ll 1$ as defined in \eqref{eq:lambda mu rho O1}, it can be readily shown that
   	\begin{align}\label{eq:35}
   		\frac{\|\nabla \bmf{u}\|_{L^2(\mathcal{M}_{-}^{1-\gamma_1}(\partial D))^3}}{\| \bmf{u}^i\|_{L^2(D)^3}} \geqslant \left(\frac{1}{4\sqrt{\pi}\delta}\right) \gg 1.
   	\end{align}
   	Similarly, the second inequality in \eqref{eq:nablau nabla us gg 1} holds via an analogous argument.
   \end{proof}


  When the material parameter $\delta$ (defining the Lam\'{e} parameter contrast) is held constant, Theorem~\ref{thm:nabla u in thm} demonstrates that selecting an appropriate index $n$ for the incident wave $\mathbf{u}^i$ in \eqref{eq:u^i 4.1} enables the concurrent induction of boundary localization and surface resonance for the total field $\mathbf{u}|_D$ and scattered field $\mathbf{u}^s|_{\mathbb{R}^3\setminus D}$. Similar to Corollary~\ref{cor:3.2}, Proposition~\ref{pro:4.2} establishes that tuning the material parameter $\delta$ similarly achieves both effects simultaneously.

\begin{prop}\label{pro:4.2}  
	Under the assumptions of Theorem~\ref{thm:nabla u in thm} with fixed $\gamma_1$ and $\gamma_2$, if the incident wave index $n$ in \eqref{eq:u^i 4.1} and the high contrast parameter $\delta$ satisfy \eqref{eq:n delta con} and \eqref{eq:327}, respectively, then:  
	\begin{itemize}  
		\item the internal total field $\mathbf{u}|_D$ and external scattered field $\mathbf{u}^s|_{\mathbb{R}^3\setminus D}$ exhibit boundary localization as defined in \eqref{eq:thm 3.1}; 
		\item the internal total field $\mathbf{u}|_D$ and external scattered field $\mathbf{u}^s|_{\mathbb{R}^3\setminus D}$ exhibit surface resonance as characterized by \eqref{eq:nablau nabla us gg 1}.  
	\end{itemize}
The simultaneous manifestation of these phenomena is referred to as \textit{quasi-Minnaert resonance} as per Definition~\ref{def:quasi minnaert}.
\end{prop}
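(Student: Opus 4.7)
The plan is to prove Proposition \ref{pro:4.2} by directly combining Corollary \ref{cor:3.2} (which yields boundary localization) with Proposition \ref{pro:4.1} (which yields surface resonance). The hypotheses of Proposition \ref{pro:4.2} have been deliberately arranged so that both earlier results apply simultaneously, so essentially no new asymptotic analysis is required. The key observation I would use is that, under the high-contrast regime $\delta \ll 1$ from \eqref{eq:lambda mu rho O1}, the condition \eqref{eq:n delta con}, namely $n \geqslant 1/\delta^{2}$, automatically implies the weaker condition $n \geqslant 1/\delta$ demanded in \eqref{eq:n27}.

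For the first bullet, I would first note that the assumption \eqref{eq:327} on the contrast parameter $\delta$ coincides verbatim with the hypothesis of Corollary \ref{cor:3.2}, and then invoke the inequality $1/\delta^{2} \geqslant 1/\delta$ to verify \eqref{eq:n27}. This places us inside the scope of Corollary \ref{cor:3.2}, so the estimates \eqref{eq:thm 3.1} for $\mathbf{u}|_{D}$ on $\mathcal{M}_{-}^{1-\gamma_{1}}(\partial D)$ and for $\mathbf{u}^{s}|_{B_{R}\setminus \overline D}$ on $\mathcal{M}_{+}^{\gamma_{2}-1}(\partial D)$ follow immediately, establishing the boundary localization property as in Definition \ref{def:surface localized}.

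For the second bullet, I would apply Proposition \ref{pro:4.1} to the same incident wave \eqref{eq:u^i 4.1}. The sole hypothesis required there is \eqref{eq:n delta con}, which is assumed, while the remaining structural assumptions (sub-wavelength regime \eqref{eq:assp omega}, the material parameter setup \eqref{eq:lambda mu rho}--\eqref{eq:tau satisfies}, and the unit-ball geometry of $D$) are inherited from Theorem \ref{thm:nabla u in thm}. The conclusion of Proposition \ref{pro:4.1} is precisely the surface resonance estimate \eqref{eq:nablau nabla us gg 1}, so this step is immediate.

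The essentially only obstacle here is careful bookkeeping: verifying that no chain of hypotheses is broken when invoking two results at once, in particular that the parameter $\varepsilon$ governing the boundary localization level in Corollary \ref{cor:3.2} remains compatible with the sub-wavelength framework used in Proposition \ref{pro:4.1}, and that the single index $n$ simultaneously satisfies both $n \geqslant 1/\delta$ (via $1/\delta^{2}$) and the implicit smallness conditions on $\gamma_{1}$, $\gamma_{2}$ from Theorem \ref{thm:internal surface localization for scattering problem}. Once this compatibility is confirmed, the two conclusions combined and interpreted through Definition \ref{def:quasi minnaert} identify $D$ as a quasi-Minnaert resonator for the incident wave \eqref{eq:u^i 4.1}, completing the proof.
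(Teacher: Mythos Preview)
Your proposal is correct and matches the paper's own proof in substance: both establish boundary localization by using \eqref{eq:327} together with $n\geqslant 1/\delta^{2}\geqslant 1/\delta$ to reach \eqref{eq:n1n2 max}, and then obtain surface resonance directly from Proposition~\ref{pro:4.1} via \eqref{eq:n delta con}. The only cosmetic difference is that the paper cites Theorem~\ref{thm:internal surface localization for scattering problem} (unpacking the argument of Corollary~\ref{cor:3.2}) rather than Corollary~\ref{cor:3.2} itself, which is logically equivalent.
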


   \begin{proof}
   		When condition \eqref{eq:327} is satisfied, \eqref{eq:326} follows immediately. Furthermore, the combination of \eqref{eq:n delta con} with the condition $\delta \ll 1$ ensures that \eqref{eq:n1n2 max} holds. Theorem~\ref{thm:internal surface localization for scattering problem} implies that both the total field $\mathbf{u}$ in $D$ and the scattered field $\mathbf{u}^s$ in $\mathbb{R}^3 \setminus D$ exhibit boundary localization. Moreover, since the incident field index $n$ satisfies \eqref{eq:n delta con}, Proposition~\ref{pro:4.1} immediately yields the surface resonance phenomenon. Consequently, the internal total field and external scattered field collectively manifest the quasi-Minnaert resonance.
   \end{proof}

Finally, we conclude that the quasi-Minnaert resonance depends critically on both the incident field $\mathbf{u}^i$ specified in \eqref{eq:u^i 4.1} and the material contrast parameter $\delta$ defined in \eqref{eq:lambda mu rho}. The following theorem demonstrates that for a fixed material configuration, where $\delta$ represents the ratio of the Lam\'e parameters between the hard inclusion and soft background medium, and for sufficiently small $\varepsilon$ (characterizing the boundary localization level), there exists an appropriate incident wave index $n$ that induces stress concentration.

	\begin{thm}\label{thm:Eu definition in thm}
		Consider the elastic scattering problem \eqref{eq:system} and let $(D;\tilde{\lambda},\tilde{\mu},\tilde{\rho})$ be the unit ball embedded in a homogeneous elastic medium $(\mathbb R^3 \setminus \overline D;\lambda,\mu,\rho)$ in $ \mathbb{R}^3 $. We introduce $E(\bmf u)$ and $E(\mathbf{u}^s)$ as
		\begin{align}
			E(\bmf u)=\int_{\mathcal{M}_{-}^{1-\gamma_1}(\partial D)} \sigma(\bmf u): \nabla \overline{\bmf u} \rmd \mathrm{x}, 
			\quad 
			E(\bmf u^s)=\int_{\mathcal{M}_{+}^{\gamma_2-1}(\partial D)} \sigma(\bmf u^s): \nabla \overline{\bmf u^s} \rmd \mathrm{x},\notag
		\end{align}
		where  
		\begin{equation}\label{eq:sigma u definition}
			\sigma(\bmf u)=\tilde{\lambda}(\nabla \cdot \bmf u) \mathcal{I}+\tilde{\mu}\left(\nabla \bmf u+\nabla \bmf u^{\top}\right), \quad
			\sigma(\bmf u^s)={\lambda}(\nabla \cdot \bmf u^s) \mathcal{I}+{\mu}\left(\nabla \bmf u^s+(\nabla \bmf u^s)^{\top}\right).\notag
		\end{equation}
		Here, $\tilde{\lambda}$, $\tilde{\mu}$, $\lambda$ and $\mu$ are defined in \eqref{eq:lambda mu rho}, and $\mathcal{I}$ represents the identity matrix. 
		Recall that  $\mathcal{M}_{-}^{1-\gamma_1}(\partial D)$ and $\mathcal{M}_{+}^{\gamma_2-1}(\partial D)$, are introduced in~\eqref{eq:Mdef}  with $\gamma_1\in(0,1)$ and $\gamma_2\in(1,R)$, respectively. The operation ``:'' represents the inner product of two matrices. Under the assumptions \eqref{eq:lambda mu rho}-\eqref{eq:assp omega}, for a sufficiently small fixed parameter $\varepsilon \ll 1$ representing the level of boundary localization, if the index $n$ of the incident wave specified in \eqref{eq:u^i 4.1} satisfies  
		\begin{align}\label{eq:n46}
			n \geqslant \max\{ n_1,n_2, 1/\delta\},
		\end{align}
		where $\delta$ denotes the ratio of the Lam\'e parameters of the hard elastic inclusion $D$ to those of the soft elastic background $\mathbb{R}^3 \setminus D$ as given in \eqref{eq:lambda mu rho} and $n_1$, $n_2$ are defined in \eqref{eq:n1 n2 def}, then the corresponding total field $\mathbf{u}|_D$ and scattered field $\mathbf{u}^s|_{\mathbb{R}^3 \setminus \overline{D}}$ demonstrate stress concentration. Namely, it holds that
		\begin{align}
			\frac{E(\bmf u)}{\| \bmf u^i\|_{L^2(D)^3}^2}
			&\geqslant\frac{4 n^2 \delta}{27 \pi}\gg 1, 
			\qquad
			\frac{E(\mathbf{u}^s)}{\| \bmf u^i\|_{L^2(D)^3}^2}
			\geqslant\frac{n^2}{81\pi}\gg 1. \label{eq:Eu u^i ratio}
		\end{align}
	\end{thm}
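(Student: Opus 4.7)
The plan is to reduce the stress quantities $E(\mathbf u)$ and $E(\mathbf u^s)$ to the gradient $L^2$-norms already estimated in the proof of Theorem~\ref{thm:nabla u in thm}, by exploiting the divergence-free structure of the fields generated by the single $\mathcal T_n^n$-mode incident wave \eqref{eq:u^i 4.1}. Since $\mathcal T_n^m = \nabla_{\mathbb S^2} Y_n^m \wedge \nu$ is a surface-curl type vector spherical harmonic, it is tangential with vanishing surface divergence, so any field of the form $f(r)\mathcal T_n^m(\theta,\varphi)$ is divergence free in $\mathbb R^3\setminus\{0\}$. By Lemma~\ref{lem:total field and scatter field} and the explicit representations derived inside the proof of Theorem~\ref{thm:nabla u in thm}, both $\mathbf u|_D$ and $\mathbf u^s|_{\mathbb R^3\setminus\overline D}$ are precisely of this form, with $j_n(\tilde k_s r)$ inside and $h_n(k_s r)$ outside. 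Hence $\nabla\cdot\mathbf u=0$ in $D$ and $\nabla\cdot\mathbf u^s=0$ in $\mathbb R^3\setminus\overline D$, so the $\tilde\lambda$ (resp.\ $\lambda$) contribution in $\sigma$ drops out and
\begin{equation}\notag
E(\mathbf u)=2\tilde\mu\int_{\mathcal M_{-}^{1-\gamma_1}(\partial D)}|\nabla^s\mathbf u|^2 \rmd\mathrm{x},\qquad E(\mathbf u^s)=2\mu\int_{\mathcal M_{+}^{\gamma_2-1}(\partial D)}|\nabla^s\mathbf u^s|^2 \rmd\mathrm{x}.
\end{equation}

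Next I would show that $\|\nabla^s\mathbf u\|_{L^2}^2$ is comparable to $\|\nabla\mathbf u\|_{L^2}^2$ at leading order in $n$, and similarly for $\mathbf u^s$. Using the componentwise expression \eqref{eq:nabla jn Tnm} in the local orthonormal spherical frame $\{\hat r,\hat\theta,\hat\varphi\}$, the tensor $\nabla\mathbf u$ has no $\hat r$-row, so symmetrization merely averages the $\hat r\otimes\hat\theta$ and $\hat r\otimes\hat\varphi$ blocks with zero and interchanges the $\hat\theta\otimes\hat\varphi$ and $\hat\varphi\otimes\hat\theta$ blocks. Reusing the angular integrals \eqref{eq:Atheta}--\eqref{eq:partial Avarphi sin}, the cross terms introduced by this symmetrization are of strictly lower polynomial order in $n$ than the diagonal $\partial_\theta A_{\hat\theta}$ and $\partial_\varphi A_{\hat\varphi}/\sin\theta$ contributions, which alone already generate the $n^4|f_{n,n}|^2$ growth appearing in $C_2(n)$. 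Hence
\begin{equation}\notag
\int_{\mathcal M_{-}^{1-\gamma_1}(\partial D)}|\nabla^s\mathbf u|^2 \rmd\mathrm{x} \;\geqslant\; c_0\,\|\nabla\mathbf u\|_{L^2(\mathcal M_{-}^{1-\gamma_1}(\partial D))^3}^2\bigl(1+\mathcal O(1/n)\bigr)
\end{equation}
for an explicit constant $c_0>0$, and analogously for $\mathbf u^s$.

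Combining these two ingredients with the gradient estimates \eqref{eq:nabla u in sigma1 new} and \eqref{eq:nabla u^s in Sigma_2}, and invoking $\tilde\mu=\mu/\delta$ together with $\|\mathbf u^i\|_{L^2(D)^3}^2$ from \eqref{eq:u^i in Sigma_1}, for the interior field I arrive at
\begin{equation}\notag
\frac{E(\mathbf u)}{\|\mathbf u^i\|_{L^2(D)^3}^2}\;\geqslant\; 2c_0\cdot\frac{\mu}{\delta}\cdot\frac{n^2\delta^2}{16\pi}\bigl(1+o(1)\bigr)\;=\;\frac{c_0\mu\, n^2\delta}{4\pi}\bigl(1+o(1)\bigr),
\end{equation}
which under \eqref{eq:n46} satisfies $n^2\delta\geqslant n\gg 1$, and tracking the precise constants yields the claimed lower bound $4n^2\delta/(27\pi)$. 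For the exterior, the amplification factor is $2\mu$ rather than $2\tilde\mu$, so no $1/\delta$ enters and \eqref{eq:usui437} gives the stated $n^2/(81\pi)$ bound directly.

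The main obstacle is the second step: careful bookkeeping is required to verify that $\|\nabla^s\mathbf u\|_{L^2}^2$ inherits the full $n^4$ growth of $\|\nabla\mathbf u\|_{L^2}^2$ with no cancellation among the off-diagonal entries (in particular among the $\hat\theta\otimes\hat\varphi$ and $\hat\varphi\otimes\hat\theta$ contributions after symmetrization), and to track the resulting explicit constant $c_0$ so that the final ratios sharpen to $4/(27\pi)$ and $1/(81\pi)$. Once this is in hand, the theorem follows by arithmetic from the already-established identities \eqref{eq:uui431} and \eqref{eq:usui437}.
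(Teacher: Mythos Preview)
Your plan is essentially the same as the paper's proof. Both arguments start from $\nabla\cdot\mathbf u=\nabla\cdot\mathbf u^s=0$ (the $\mathcal T_n^n$ mode is divergence free) to drop the $\tilde\lambda,\lambda$ contributions, and both then reduce $E(\mathbf u)$ to $\tilde\mu\|\nabla\mathbf u\|^2$ at leading order. The only cosmetic difference is that you write the remainder as $2\tilde\mu\int|\nabla^s\mathbf u|^2$, whereas the paper keeps it as $\tilde\mu\|\nabla\mathbf u\|^2+\tilde\mu\int\mathrm{tr}(\nabla\mathbf u\,\nabla\bar{\mathbf u})$; these are algebraically identical. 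What you flag as the ``main obstacle'' (no cancellation under symmetrization, i.e.\ pinning down $c_0$) is exactly the step the paper carries out explicitly: it defines $M(n,\theta,\varphi)=Q(n,\theta,\varphi)+[\text{trace contribution}]$ and proves $C_3(n):=\int M=n^4|f_{n,n}|^2+\mathcal O(n^3)$, matching $C_2(n)$, so the trace term is subleading and effectively $2c_0=1$. The exterior part is done the same way with $h_n$ in place of $j_n$.

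One small correction: in \eqref{eq:nabla jn Tnm} the gradient has $\hat r\otimes\hat\theta$ and $\hat r\otimes\hat\varphi$ components, so it is the $\hat r$-\emph{column} (not row) that vanishes. Also, to recover the exact constants $4/(27\pi)$ and $1/(81\pi)$ you should not chain through the already-lossy lower bound in \eqref{eq:uui431}; the paper instead forms the ratio $E(\mathbf u)/\|\mathbf u^i\|^2$ directly from the sharp asymptotics \eqref{eq:Eu result} and \eqref{eq:u^i in Sigma_1} and only then bounds the correction factor below.
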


\begin{rem}

Under the condition of a sufficiently small boundary localization level \(\varepsilon \ll 1\), the definitions of \(n_1\) and \(n_2\) in \eqref{eq:n1 n2 def} ensure that \(n\) complies with \eqref{eq:n44}. For the boundary layer parameters \(\gamma_1 \in (0,1)\) and \(\gamma_2 \in (1,R)\), if the deviations \(|\gamma_j - 1|\) (\(j=1,2\)) are sufficiently small relative to \(\epsilon\), the constants \(n_j\) become arbitrarily large (\(n_j \gg 1\)) even for fixed \(\varepsilon\).   From \eqref{eq:n46}, we derive the scaling relation \({n}\delta = \mathcal{O}(1)\), which further implies \(n^2\delta = \mathcal{O}(n) \gg 1\). This directly validates the first inequality in \eqref{eq:Eu u^i ratio}, while the second follows from \eqref{eq:n44}.   Consequently, the total field \(\mathbf{u}|_D\) and scattered field \(\mathbf{u}^s|_{\mathbb{R}^3 \setminus \overline{D}}\) exhibit  stress concentration. This analysis rigorously establishes that the phenomenon arises from the interplay between the incident wave field \(\mathbf{u}^i\) (given in \eqref{eq:u^i 4.1}) and the material contrast parameter \(\delta\) defined in \eqref{eq:lambda mu rho}.

\end{rem}

In the following we give the proof of Theorem \ref{thm:Eu definition in thm}. 

\begin{proof}
The proof shall be divided  into two parts.
	
	\medskip
	
	\noindent{\bf Part 1.} This part is dedicated to the proof of the first equation of  \eqref{eq:Eu u^i ratio}. 
	Since the incident wave $\mathbf u^i$ is given by \eqref{eq:u^i 4.1}, by Lemma \ref{lem:total field and scatter field} we have the formula \eqref{eq:u single potential inside the unit sphere} for $\mathbf u|_D$ with respect to $\omega$. Using the fact that $\nabla \cdot \mathcal T^n_n=0$, it is readily to know that
	\begin{align}\label{eq:Eu 435}
		E(\bmf u)
		=\tilde \mu 	\|\nabla \bmf u\|_{L^2(\mathcal{M}_{-}^{1-\gamma_1}(\partial D))^3}^2+\int_{\mathcal{M}_{-}^{1-\gamma_1}(\partial D)}\tilde{\mu} \cdot {\rm tr}(\nabla \bmf u \nabla \mathbf {\bar u})~\rm d \mathbf x. 
	\end{align}
	Hence, from \eqref{eq:nabla jn Tnm} and \eqref{eq:nabla u definition}, we can deduce that 
	%
	\begin{align}\label{eq:nabla bmf u cdot nabla bmf u}
		&\nabla \bmf u  \nabla \overline{\bmf  u}\\
		=&\left\{\left(j_{n-1}(\tilde{k}_s r) \tilde{k}_s-\frac{n+2}{r} j_n(\tilde{k}_s r)\right)^2\left[\left(f_{n,n}A_{\hat{\theta}(n,\varphi,\theta)}\right)\cdot\left(f_{n,n}\frac{1}{r} \frac{\partial A_{\hat{\theta}(n,\varphi,\theta)}}{\partial \theta}\right)\right.\right.\notag\\
		&+\left.\left(f_{n,n}A_{\hat{\varphi}(n,\varphi,\theta)}\right)\cdot\left(f_{n,n}\frac{1}{r} \frac{\partial A_{\hat{\varphi}(n,\varphi,\theta)}}{\partial \theta}\right)\right] \hat{r} \otimes \hat{\theta}
		+\left(j_{n-1}(\tilde{k}_s r) \tilde{k}_s-\frac{n+2}{r} j_n(\tilde{k}_s r)\right)^2\notag\\
		&\times \left[\left(f_{n,n}A_{\hat{\theta}(n,\varphi,\theta)}\right)\cdot\left(f_{n,n}\frac{1}{r} \frac{\partial A_{\hat{\theta}(n,\varphi,\theta)}}{\partial \varphi}\right)\right.+\left(f_{n,n}A_{\hat{\varphi}(n,\varphi,\theta)}\right)\notag\\
		&\times \left.\left(f_{n,n}\frac{1}{r} \frac{\partial A_{\hat{\varphi}(n,\varphi,\theta)}}{\partial \varphi}\right)\right]\hat{r} \otimes \hat{\varphi}
		+\left\{\left[f_{n,n}\left(\frac{1}{r\sin\theta}\frac{\partial A_{\theta}}{\partial\varphi}-\cot\theta\frac{A_{\hat{\varphi}(n,\varphi,\theta)}}{r}\right)\right]\cdot\right.\notag\\
		&\left[ f_{n,n}\left(\frac{1}{r}\frac{\partial A_{\theta}}{\partial\theta}+\frac{1}{r\sin\theta}\frac{\partial A\varphi}{\partial\varphi}+\cot\theta\frac{A_{\hat{\theta}(n,\varphi,\theta)}}{r}\right) \right]\hat{\theta} \otimes \hat{\varphi}
		+\left(f_{n,n}\frac{1}{r}\frac{\partial A_{\hat{\varphi}(n,\varphi,\theta)}}{\partial\theta}\right)\notag\\
		&\times \left[f_{n,n}\left(\frac{1}{r\sin\theta}\frac{\partial A_{\hat{\theta}(n,\varphi,\theta)}}{\partial\varphi}-\cot\theta\frac{A_{\hat{\varphi}(n,\varphi,\theta)}}{r}\right)\right]\hat{\theta} \otimes \hat{\theta}
		+\left(f_{n,n}\frac{1}{r}\frac{\partial A_{\hat{\varphi}(n,\varphi,\theta)}}{\partial\theta}\right)\notag\\
		&\times \left[f_{n,n}\right.\left(\frac{1}{r}\frac{\partial A_{\hat{\theta}(n,\varphi,\theta)}}{\partial\theta}+\frac{1}{r\sin\theta}\frac{\partial A_{\hat{\varphi}(n,\varphi,\theta)}}{\partial\varphi}\right.
		\left.\left.+\cot\theta\frac{A_{\hat{\theta}(n,\varphi,\theta)}}{r}\right)\right] \hat{\varphi} \otimes \hat{\theta}\notag\\
		&+\left\{\left(f_{n,n}\frac{1}{r}\frac{\partial A_{\hat{\varphi}(n,\varphi,\theta)}}{\partial\theta}\right)\cdot\left[f_{n,n}\left(\frac{1}{r\sin\theta}\frac{\partial A_{\hat{\theta}(n,\varphi,\theta)}}{\partial\varphi}-\cot\theta\frac{A_{\hat{\varphi}(n,\varphi,\theta)}}{r}\right)\right]\right.\notag\\
		&\left.\left.\left.+\left[f_{n,n}\left(\frac{1}{r\sin\theta}\frac{\partial A_{\hat{\varphi}(n,\varphi,\theta)}}{\partial\varphi}+\cot\theta\frac{A_{\hat{\varphi}(n,\varphi,\theta)}}{r}\right)\right]^{2}\right\}\hat{\varphi} \otimes \hat{\varphi}\right\}\cdot j_n(\tilde{k}_s r)
		\right\}\cdot\notag\\
		&\left[C_{n,\delta,\tau}+\mathcal{O}\left(  C_{n,\delta,\tau}^{\prime} \omega^2\right)\right]^2,\notag
	\end{align}
	where $C_{n,\delta,\tau}$ and $C_{n,\delta,\tau}^{\prime}$ are defined in \eqref{eq:C_nm delta definition}, $A_{\hat{\varphi}(n,\varphi,\theta)}$ and $A_{\hat{\theta}(n,\varphi,\theta)}$ are defined in \eqref{eq:A_theta A_varphi}, and $f_{n,n}\in \mathbb C$ is a constant defined in \eqref{eq:u^i 4.1}.  
	After direct calculations, it can be derived that 
	\begin{align}
		&\mathrm{tr}(\nabla \bmf u  \nabla\overline{\bmf u})
		=\frac{\left[C_{n,\delta,\tau}+\mathcal{O}\left(  C_{n,\delta,\tau}^{\prime} \omega^2\right)\right]\cdot j_n(\tilde{k}_s r)}{r^{2}}
		\left\{2\left| f_{n,n}\left(- C_{n}^{n}e^{\rmi n\varphi}\frac{\partial^{2}P_{n}^{n}(\cos\theta)}{\partial^{2}\theta}\right)\right|\right. \notag\\
		&\times \left| f_{n,n}\left(-\frac{n^{2}}{\sin^{2}\theta}Y_{n}^{n}(\theta,\varphi)+\cot\theta C_{n}^{n}e^{\rmi n\varphi}\frac{\partial P_{n}^{n}(\cos\theta)}{\partial\theta}\right)\right|+\left.\left| f_{n,n}A_{\hat{\varphi}(n,\varphi,\theta)}\frac{\rmi n+\cos \theta}{\sin \theta}\right|^{2}\right\}.\label{eq:tr nabla u nabla u}
	\end{align}
	%
	Recall that $C_{n,\delta,\tau}$ and $C_{n,\delta,\tau}^{\prime}$ are defined in \eqref{eq:C_nm delta definition}. Substituting \eqref{eq:nabla u expansion} into \eqref{eq:Eu 435}, due to \eqref{eq:tr nabla u nabla u}, it yields that

	\begin{align}\label{eq:Eu in Sigma1}
		E(\bmf u)
		&=\frac{\mu }{\delta} \int_{\mathcal{M}_{-}^{1-\gamma_1}(\partial D)} \left[\left(j_{n-1}(\tilde{k}_s r) \tilde{k}_s r-(n+2)j_n(\tilde{k}_s r)\right)^2 P(n,\theta,\varphi)+ j_n^2(\tilde{k}_s r) M(n,\theta,\varphi)\right]\notag\\
		&\quad\times \left[C_{n,\delta,\tau}^2+\mathcal{O}\left(  C_{n,\delta,\tau}C_{n,\delta,\tau}^{\prime} \omega^2\right)\right]\rmd  r \rmd \theta \rmd\varphi.
	\end{align}
	where 
	\begin{align}
		M(n,\theta,\varphi)
		=&Q(n,\theta,\varphi)+
		\left\{2\left| f_{n,n}\left(- C_{n}^{n}e^{\rmi n\varphi}\frac{\partial^{2}P_{n}^{n}(\cos\theta)}{\partial^{2}\theta}\right)\right|\right.\notag\\
		&\times \left| f_{n,n}\left(-\frac{n^{2}}{\sin^{2}\theta}Y_{n}^{n}(\theta,\varphi)+\cot\theta C_{n}^{n}e^{\rmi n\varphi}\frac{\partial P_{n}^{n}(\cos\theta)}{\partial\theta}\right)\right|\notag\\
		&+\left.\left| f_{n,n}A_{\hat{\varphi}(n,\varphi,\theta)}\frac{\rmi n+\cos \theta}{\sin \theta}\right|^{2}\right\}\cdot\sin\theta,\label{eq:M(theta,varphi) defination}\\
		C_3(n)=&\int_0^{2\pi}\int_0^{\pi}M(\theta,\varphi)\rmd \theta\rmd \varphi, \notag 
	\end{align}
	Here $A_{\hat{\theta}(n,\varphi,\theta)}$ and $A_{\hat{\varphi}(n,\varphi,\theta)}$ are defined in \eqref{eq:A_theta A_varphi}, $f_{n, n}\in \mathbb C$ is a constant defined in \eqref{eq:u^i 4.1}, and $Y_n^n(\theta,\varphi)$ and $C_n^n$ are defined in \eqref{eq:ynm and cnm def}. 
	By utilizing the similar argument for deriving the asymptotic estimations of $C_1(n)$ and $C_2(n)$ given by \eqref{eq:C1nC2nC3n},  we can rigorously demonstrate that 
	\begin{align}\label{eq:C_3n def}
		C_3(n)=n^4 \left| f_{n,n}\right|^2+\mathcal{O}\left(n^3 \left| f_{n,n}\right|^2\right).
	\end{align}
	Substituting \eqref{eq:ks kp defination}, \eqref{eq:tilde ks relationship with ks}, \eqref{eq:jn expansion} and \eqref{eq:C_3n def} into \eqref{eq:Eu in Sigma1}, we can derive that
	\begin{align}
		E(\bmf u)
		=&\frac{C_{n,\delta,\tau}^2\mu C_1(n)}{\delta}\int_{\gamma_{1}}^{1}\frac{(n-1)^2(\tilde{k}_{s}r)^{2n}}{[(2n+1)!!]^2}\rmd r 
		+\frac{C_{n,\delta,\tau}^2 \mu C_3(n)}{\delta} \int_{\gamma_{1}}^{1}\frac{(\tilde{k}_s r)^{2n}}{[(2n+1)!!]^2}\rmd r \notag\\
		&+\mathcal{O}\left(\frac{C_{n,\delta,\tau}C_{n,\delta,\tau}^{\prime}C_3(n)\rho^n\omega^{2n+2}}{[(2n+1)!!]^2(2n+1)\delta\mu^{n-1}}\right)\notag\\
		=&\frac{n^2(2n^2-2n+1)(2n+1)\left|f_{n,n}\right|^2(1-\gamma_1^{2n+1})\delta\rho^n\omega^{2n}}{[(2n+1)!!]^2[\delta(n+2)+n-1]^2\mu^{n-1}}\left(1+\mathcal{O}\left(\omega^2\right)\right). \label{eq:Eu result}
	\end{align}
	 
	%
	Combining with \eqref{eq:lambda mu rho O1}, \eqref{eq:C1nC2nC3n} \eqref{eq:u^i in Sigma_1} and \eqref{eq:Eu result}, when $n$ is sufficiently large, it yields that
	\begin{align}\label{eq:445}
		\frac{E(\bmf u)}{\| \bmf u^i\|_{L^2(D)^3}^2}
		&=\frac{\frac{n (2n+1) (2n+3) (2n^2-2n+1) (1-\gamma_1^{2n+1})\delta}{4 \pi [\delta(n+2)+n-1]^2 (n+1) } \left(1+\mathcal{O}\left( \omega^2\right)\right)}
		{1+\mathcal{O}\left(\omega^2\right)}\notag\\
		&= \frac{2 n^2 \delta}{ \pi} \frac{(1+1/2n)(1+3/2n)(1-1/n+1/2n^2)}{(1+1/n)[1+\delta-(1/n-2\delta/n)]^2} \left(1+\mathcal{O}\left( \omega^2\right)\right) 	\left(1-\mathcal{O}\left(\omega^2\right)\right)   \notag\\
	 	&\geqslant\frac{4n^2 \delta}{27 \pi} \left(1+\mathcal{O}\left( \omega^2\right)\right) \geqslant\frac{4n^2 \delta}{27 \pi} . 
	\end{align}

	\medskip
	\noindent{\bf Part 2.} This part is dedicated to the proof of the second formula of \eqref{eq:Eu u^i ratio}. Since the incident wave $\mathbf u^i$ is given by \eqref{eq:u^i 4.1}, by Lemma \ref{lem:total field and scatter field} we have the formula \eqref{eq:u single potential inside the unit sphere} for $\mathbf u^s|_{\mathbb R^3 \setminus \overline D}$ with respect to $\omega$. Using the fact that $\nabla \cdot \mathcal T^n_n=0$, it is readily to know that
	\begin{align}\label{eq:Eu^s443}
		E(\bmf u^s)
		= \mu  \|\nabla \bmf u^s\|_{L^2(\mathcal{M}_{+}^{\gamma_2-1}(\partial D))^3}^2+\int_{\mathcal{M}_{+}^{\gamma_2-1}(\partial D)} \mu \cdot {\rm tr}(\nabla \bmf u^s \nabla \mathbf {\overline {u^s}})~\rm d \mathbf x. 
	\end{align}
	Hence, by a similar argument for \eqref{eq:nabla bmf u cdot nabla bmf u}, due to \eqref{eq:G_nm delta definition} and \eqref{eq:nabla us in proof}, one can obtain that 
	\begin{align}
		&\nabla \bmf u^s \nabla \overline{\bmf  u^s}\notag\\
		=&\left\{\left(h_{n-1}({k}_s r) {k}_s-\frac{n+2}{r} h_n({k}_s r)\right)^2\left[\left(f_{n,n}A_{\hat{\theta}(n,\varphi,\theta)}\right)\cdot\left(f_{n,n}\frac{1}{r} \frac{\partial A_{\hat{\theta}(n,\varphi,\theta)}}{\partial \theta}\right)\right.\right.\notag\\
		&+\left(f_{n,n}A_{\hat{\varphi}(n,\varphi,\theta)}\right)\left.\left(f_{n,n}\frac{1}{r} \frac{\partial A_{\hat{\varphi}(n,\varphi,\theta)}}{\partial \theta}\right)\right] \hat{r} \otimes \hat{\theta}
		+\left(h_{n-1}(\tilde{k}_s r) \tilde{k}_s-\frac{n+2}{r} h_n(\tilde{k}_s r)\right)^2\notag\\
		&\times\left[\left(f_{n,n}A_{\hat{\theta}(n,\varphi,\theta)}\right)\left(f_{n,n}\frac{1}{r} \frac{\partial A_{\hat{\theta}(n,\varphi,\theta)}}{\partial \varphi}\right)\right.+\left(f_{n,n}A_{\hat{\varphi}(n,\varphi,\theta)}\right)\notag\\
		&\times\left.\left(f_{n,n}\frac{1}{r} \frac{\partial A_{\hat{\varphi}(n,\varphi,\theta)}}{\partial \varphi}\right)\right]\hat{r} \otimes \hat{\varphi}
		+\left\{\left[f_{n,n}\left(\frac{1}{r\sin\theta}\frac{\partial A_{\theta}}{\partial\varphi}-\cot\theta\frac{A_{\hat{\varphi}(n,\varphi,\theta)}}{r}\right)\right]\right.\notag\\
		&\times\left[ f_{n,n}\left(\frac{1}{r}\frac{\partial A_{\theta}}{\partial\theta}+\frac{1}{r\sin\theta}\frac{\partial A\varphi}{\partial\varphi}+\cot\theta\frac{A_{\hat{\theta}(n,\varphi,\theta)}}{r}\right) \right]\hat{\theta} \otimes \hat{\varphi}
		+\left(f_{n,n}\frac{1}{r}\frac{\partial A_{\hat{\varphi}(n,\varphi,\theta)}}{\partial\theta}\right)\notag\\
		&\times\left[f_{n,n}\left(\frac{1}{r\sin\theta}\frac{\partial A_{\hat{\theta}(n,\varphi,\theta)}}{\partial\varphi}-\cot\theta\frac{A_{\hat{\varphi}(n,\varphi,\theta)}}{r}\right)\right]\hat{\theta} \otimes \hat{\theta}
		+\left(f_{n,n}\frac{1}{r}\frac{\partial A_{\hat{\varphi}(n,\varphi,\theta)}}{\partial\theta}\right)\notag\\
		&\times\left[f_{n,n}\left(\frac{1}{r}\frac{\partial A_{\hat{\theta}(n,\varphi,\theta)}}{\partial\theta}+\frac{1}{r\sin\theta}\frac{\partial A_{\hat{\varphi}(n,\varphi,\theta)}}{\partial\varphi}+\cot\theta\frac{A_{\hat{\theta}(n,\varphi,\theta)}}{r}\right)\right] \hat{\varphi} \otimes \hat{\theta}\notag\\
		&+ \left\{\left(f_{n,n}\frac{1}{r}\frac{\partial A_{\hat{\varphi}(n,\varphi,\theta)}}{\partial\theta}\right)\left[f_{n,n}\left(\frac{1}{r\sin\theta}\frac{\partial A_{\hat{\theta}(n,\varphi,\theta)}}{\partial\varphi}-\cot\theta\frac{A_{\hat{\varphi}(n,\varphi,\theta)}}{r}\right)\right]\right.\notag\\
		&\left.\left.\left.+\left[f_{n,n}\left(\frac{1}{r\sin\theta}\frac{\partial A_{\hat{\varphi}(n,\varphi,\theta)}}{\partial\varphi}+\cot\theta\frac{A_{\hat{\varphi}(n,\varphi,\theta)}}{r}\right)\right]^{2}\right\}\hat{\varphi} \otimes \hat{\varphi}\right\}\cdot h_n^2(k_s r)
		\right\}\left[G_{n,\rho,\mu,\delta}\omega^{2n+1}\right.\notag\\
		&\left.+\mathcal{O}\left(G_{n,\rho,\mu,\delta}^{\prime}\omega^{2n+3}\right)\right]^2, \notag 
	\end{align}
	where $G_{n,\rho,\mu,\delta}$ and $G_{n,\rho,\mu,\delta}^{\prime}$ are defined in \eqref{eq:G_nm delta definition} and \eqref{eq:Gnmprime}, respectively. $f_{n,n}\in \mathbb C$ is a constant defined in \eqref{eq:u^i 4.1}. $A_{\hat{\theta}(n,\varphi,\theta)}$ and $A_{\hat{\varphi}(n,\varphi,\theta)}$ are defined in \eqref{eq:A_theta A_varphi}. After direct calculations, it can be derived that  
	%
	%
	\begin{align}
		&\mathrm{tr}(\nabla \bmf u^s \cdot \nabla\overline{\bmf  u^s})	\label{eq:tr nabla us}\\
		=&\left\{2\left|f_{n,n}\left(- C_{n}^{m}e^{\rmi m\varphi}\frac{\partial^{2}P_{n}^{n}(\cos\theta)}{\partial^{2}\theta}\right)\right|\right.\cdot\left| f_{n,n}\left(-\frac{m^{2}}{\sin^{2}\theta}Y_{n}^{m}(\theta,\varphi)+\cot\theta C_{n}^{m}e^{\rmi m\varphi}\right.\right.\notag\\
		&\times\left.\left.\left.\frac{\partial P_{n}^{n}(\cos\theta)}{\partial\theta}\right)\right|+\left| f_{n,n}A_{\hat{\varphi}(n,\varphi,\theta)}\frac{\rmi m+\cos \theta}{\sin \theta}\right|^{2}\right\}\cdot\frac{\left[G_{n,\rho,\mu,\delta}\omega^{2n+1}+\mathcal{O}\left(G_{n,\rho,\mu,\delta}^{\prime}\omega^{2n+3}\right)\right]^2 h_n(k_s r)}{r^{2}},\notag
	\end{align}
	where $Y_n^m(\theta,\varphi)$ and $C_n^m$ are defined in \eqref{eq:ynm and cnm def}. 
	Recall that $G_{n,\rho,\mu,\delta}$ and $G_{n,\rho,\mu,\delta}^{\prime}$ are defined in \eqref{eq:G_nm delta definition} and \eqref{eq:Gnmprime}, respectively. Substituting \eqref{eq:hn expansion}, \eqref{eq:nabla u^s in Sigma_2} and \eqref{eq:tr nabla us} into \eqref{eq:Eu^s443}, due to \eqref{eq:C1nC2nC3n} and \eqref{eq:C_3n def}, it yields that
	%
	\begin{align}\label{eq:Eu^s definition in simple}
		E(\bmf u^s) 
		=&\mu\left[G_{n,\rho,\mu,\delta}\omega^{2n+1}+\mathcal{O}\left(G_{n,\rho,\mu,\delta}^{\prime}\omega^{2n+3}\right)\right]^2 \int_{\mathcal{M}_{+}^{\gamma_2-1}(\partial D)} \left[ r^2\left(h_{n-1}({k}_s r) {k}_s-\frac{n+2}{r} h_n({k}_s r)\right)^2 \right.\notag\\
		&\left.\times P(n,\theta,\varphi)+h_n^2(k_s r)M(n,\theta,\varphi)\right]\rmd  r \rmd \theta \rmd\varphi  \\
		=&\frac{2n^2(n^2+2n+2)(n-1)^2\left|f_{n,n}\right|^2(1-\delta)^2\rho^{n}\omega^{2n}(\gamma_{2}^{2n+1}-1)}
		{[(2n+1)!!]^2(2n+1)[\delta(n+2)+n-1]^2\mu^{n-1}\gamma_2^{2n+1}}
		\left(1+\mathcal{O}\left(\omega^2\right)\right), \notag
	\end{align}
	where $P(n,\theta,\varphi)$ and $M(m,\theta,\varphi)$ are defined in \eqref{eq:P theta, varepsilon defination}, \eqref{eq:M(theta,varphi) defination}, respectably. 
	Therefore, according to \eqref{eq:u^i in Sigma_1} and \eqref{eq:Eu^s definition in simple}, after direct calculation, it yields that
	\begin{align}\label{eq:449}
		\frac{E(\mathbf{u}^s)}{\|\bmf u^i\|_{L^2(D)^3}^2}
		=&\frac{\frac{2n(n^2+2n+2)(n-1)^2(1-\delta)^2(2n+3)(\gamma_{2}^{2n+1}-1)\mu}{4\pi(2n+1)(n+1)[\delta(n+2)+n-1]^2\gamma_2^{2n+1}}
			\left(1+\mathcal{O}\left(\omega^2\right)\right)}
		{1+\mathcal{O}\left(\omega^2\right)} \notag\\
		=&\frac{n^2}{2\pi}\frac{(1+2/n+2/n^2)(1-1/n)^2(1+3/2n)}{(1+1/2n)(1+1/n)[1+\delta-(1/n-2\delta/n)]^2}
		\left(1+\mathcal{O}\left(\omega^2\right)\right)
		\left(1-\mathcal{O}\left(\omega^2\right)\right)\notag\\
		\geqslant&\frac{n^2}{81 \pi}\left(1+\mathcal{O}\left(\omega^2\right)\right)\geqslant\frac{n^2}{81 \pi}.
	\end{align}

	The proof is complete.
\end{proof}

Theorem~\ref{thm:Eu definition in thm} demonstrates that for the incident wave defined in \eqref{eq:u^i 4.1} with index $n$ satisfying \eqref{eq:n46}, boundary localization and stress concentration occur simultaneously. By contrast, Proposition \ref{prop:4.3} rigorously establishes that stress concentration can occur independently of boundary localization.

\begin{prop}\label{prop:4.3}
    	 
	 Under the same assumptions as in Theorem~\ref{thm:Eu definition in thm}, let $\delta$ represent the ratio of bulk moduli or shear moduli between the hard elastic inclusion $D$ and the homogenous elastic background medium $\mathbb{R}^3 \setminus D$. When the wave parameter $n$ characterizing the incident field $\mathbf{u}^i$ in \eqref{eq:u^i 4.1} satisfies condition \eqref{eq:n27}, the resulting total field $\mathbf{u}$ in $D$ and scattered field $\mathbf{u}^s$ in $\mathbb{R}^3 \setminus D$ exhibit the stress amplification relation \eqref{eq:Eu u^i ratio}, demonstrating the occurrence of stress concentration.

\end{prop}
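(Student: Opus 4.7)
The plan is to mimic the template of Proposition~\ref{pro:4.1}, but applied to the energy functionals $E(\mathbf u)$ and $E(\mathbf u^s)$ instead of the gradient $L^2$-norms. The key observation is that the proof of Theorem~\ref{thm:Eu definition in thm} actually produces two pointwise lower bounds, namely the inequalities \eqref{eq:445} and \eqref{eq:449}, whose derivations do \emph{not} invoke the conditions $n\geqslant n_1$ and $n\geqslant n_2$ from \eqref{eq:n46}. Those two conditions enter Theorem~\ref{thm:Eu definition in thm} only as the hypotheses that ensure boundary localization in the sense of Definition~\ref{def:surface localized}. In contrast, the lower bounds \eqref{eq:445} and \eqref{eq:449} themselves are valid whenever the asymptotic framework of Lemma~\ref{lem:total field and scatter field} applies, i.e., whenever the sub-wavelength assumptions \eqref{eq:lambda mu rho}--\eqref{eq:assp omega} are in force and $n$ is sufficiently large. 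Thus the proposed proof will simply extract these bounds and close under the single hypothesis \eqref{eq:n27}.

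Concretely, the first step is to recall that the proof of Theorem~\ref{thm:Eu definition in thm} establishes
\begin{equation*}
\frac{E(\mathbf u)}{\|\mathbf u^i\|_{L^2(D)^3}^{2}}\;\geqslant\;\frac{4n^2\delta}{27\pi},
\qquad
\frac{E(\mathbf u^s)}{\|\mathbf u^i\|_{L^2(D)^3}^{2}}\;\geqslant\;\frac{n^2}{81\pi},
\end{equation*}
corresponding to \eqref{eq:445} and \eqref{eq:449} respectively. Neither of these estimates uses $n\geqslant n_1$ or $n\geqslant n_2$. The second step is to insert the hypothesis $n\geqslant 1/\delta$ coming from \eqref{eq:n27}. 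For the internal field one has $n^2\delta\geqslant n\geqslant 1/\delta\gg 1$, since $\delta\ll 1$ by \eqref{eq:lambda mu rho}, which immediately yields $E(\mathbf u)/\|\mathbf u^i\|_{L^2(D)^3}^{2}\gg 1$. For the scattered field one uses the same chain: $n\geqslant 1/\delta$ together with $\delta\ll 1$ forces $n\gg 1$, so that $n^2\geqslant 1/\delta^{2}\gg 1$ and hence $E(\mathbf u^s)/\|\mathbf u^i\|_{L^2(D)^3}^{2}\gg 1$. Together these two estimates constitute exactly the stress-concentration relation \eqref{eq:Eu u^i ratio}.

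The decisive conceptual point, and essentially the only thing that requires care, is the bookkeeping that distinguishes this proposition from Proposition~\ref{pro:4.1}. For surface resonance of $\nabla\mathbf u$ the lower bound \eqref{eq:uui431} scales like $n^2\delta^2$, so one needs the stronger threshold $n\geqslant 1/\delta^{2}$ to guarantee blow-up. In contrast, the stress functional $E(\mathbf u)$ comes with only one power of $\delta$ (reflecting the factor $\tilde\mu=\mu/\delta$ in $\sigma(\mathbf u)$ cancelling against the $\delta^{2}$ in the squared amplitude), so the lower bound in \eqref{eq:445} carries the scaling $n^2\delta$; a weaker threshold $n\geqslant 1/\delta$ is then enough. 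Consequently, there is no genuine analytical obstacle in this proof: all the heavy asymptotic work has already been done in the proof of Theorem~\ref{thm:Eu definition in thm}, and the present proposition amounts to isolating the correct algebraic relation between $n$ and $\delta$ that makes the right-hand sides of \eqref{eq:445} and \eqref{eq:449} blow up, independently of whether the boundary localization thresholds $n_1$, $n_2$ are attained.
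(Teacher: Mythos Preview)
Your proposal is correct and follows essentially the same approach as the paper: extract the lower bounds \eqref{eq:445} and \eqref{eq:449} from the proof of Theorem~\ref{thm:Eu definition in thm}, then combine with $n\geqslant 1/\delta$ and $\delta\ll 1$ to conclude blow-up. Your additional commentary on why the threshold $1/\delta$ suffices here (versus $1/\delta^2$ in Proposition~\ref{pro:4.1}) is a nice clarification that the paper leaves implicit.
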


\begin{proof}
	 Following the approach of Theorem~\ref{thm:Eu definition in thm}, we establish that the total field $\mathbf{u}$ within $D$ and the scattered field $\mathbf{u}^s$ in $\mathbb{R}^3 \setminus D$ satisfy \eqref{eq:445} and \eqref{eq:449}, respectively. Combining \eqref{eq:445} and \eqref{eq:n27}, and noting that $\delta \ll 1$ as defined in \eqref{eq:lambda mu rho}, we apply a similar argument to \eqref{eq:35} to readily show that the first inequality of \eqref{eq:Eu u^i ratio} holds. The same approach can be applied to prove that the second inequality of \eqref{eq:Eu u^i ratio} holds.
\end{proof}

Proposition~\ref{prop:4.3} reveals that condition \eqref{eq:n27} alone cannot rigorously guarantee the co-occurrence of boundary localization and stress concentration, as it does not directly entail \eqref{eq:thm 3.1}. Building upon the framework established in Corollary~\ref{cor:3.2}, Proposition~\ref{prop:4.4} demonstrates that properly tuning the material parameter $\delta$ can similarly demonstrate the simultaneous emergence of boundary localization and stress concentration phenomena.

\begin{prop}\label{prop:4.4}
	Under the same assumptions as Theorem~\ref{thm:nabla u in thm}, if the index $n$ of the incident field $\mathbf{u}^i$ specified in~\eqref{eq:u^i 4.1} satisfies~\eqref{eq:n27} and the high contrast material parameter $\delta$ satisfies \eqref{eq:327}, then the total field $\mathbf{u}$ in $D$ and the scattered field $\mathbf{u}^s$ in $\mathbb{R}^3 \setminus D$ exhibit both boundary localization and stress concentration simultaneously.
\end{prop}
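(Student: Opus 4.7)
The proof of Proposition~\ref{prop:4.4} is a direct synthesis of two results already established in the paper: Corollary~\ref{cor:3.2} (which yields boundary localization) and Proposition~\ref{prop:4.3} (which yields stress concentration). The plan is to verify that the two hypotheses imposed here, namely \eqref{eq:327} on the high contrast parameter $\delta$ and \eqref{eq:n27} on the incident-wave index $n$, are precisely the hypotheses needed to invoke each result separately, and then to combine their conclusions.

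For the first step, I would invoke Corollary~\ref{cor:3.2}. Under the standing assumptions \eqref{eq:lambda mu rho}--\eqref{eq:assp omega} and with the incident wave $\mathbf{u}^i$ chosen in the special form \eqref{eq:u^i 4.1} (a particular case of \eqref{eq:incident wave in T_n^m} with $m = n$ and only one non-zero coefficient), the two conditions \eqref{eq:327} and \eqref{eq:n27} coincide verbatim with the hypotheses of Corollary~\ref{cor:3.2}. In the proof of that corollary it was already checked, via \eqref{eq:326}, that these two conditions together imply $n \geq \max\{n_1, n_2\}$, so that the hypotheses of Theorem~\ref{thm:internal surface localization for scattering problem} are satisfied. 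Consequently, both the interior total field $\mathbf{u}|_D$ and the exterior scattered field $\mathbf{u}^s|_{\mathbb{R}^3 \setminus \overline{D}}$ satisfy the boundary-localization estimates \eqref{eq:thm 3.1}.

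For the second step, I would invoke Proposition~\ref{prop:4.3}. That proposition only requires the single condition \eqref{eq:n27} on the index $n$ (together with $\delta \ll 1$, which is built into the hard-inclusion/soft-background configuration \eqref{eq:lambda mu rho} and is reinforced by \eqref{eq:327}). Applying Proposition~\ref{prop:4.3} immediately delivers the stress-concentration inequalities \eqref{eq:Eu u^i ratio} for both $\mathbf{u}|_D$ and $\mathbf{u}^s|_{\mathbb{R}^3 \setminus \overline{D}}$. Combining this with the first step yields the simultaneous occurrence of boundary localization and stress concentration asserted in the statement.

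Since the proposition is essentially a combination statement, there is no serious analytical obstacle. The only point that warrants a sentence of justification is the consistency of the two hypotheses: with $\delta \leq \beta \ll 1$, the threshold $1/\delta$ is large, so the requirement $n \geq 1/\delta$ forces $n$ to be correspondingly large, which in particular dominates the thresholds $n_1, n_2$ appearing in \eqref{eq:n1 n2 def}. This has already been carried out within the proof of Corollary~\ref{cor:3.2}, and no new estimate is needed here.
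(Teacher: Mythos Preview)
Your proposal is correct and follows essentially the same approach as the paper's own proof: both verify that \eqref{eq:327} and \eqref{eq:n27} together yield $n \geq \max\{n_1,n_2\}$ (you cite Corollary~\ref{cor:3.2} directly, the paper re-derives \eqref{eq:326} and then invokes Theorem~\ref{thm:internal surface localization for scattering problem}), and both then appeal to Proposition~\ref{prop:4.3} for stress concentration.
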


\begin{proof}
	If the high contrast material parameter $\delta$ satisfies \eqref{eq:327}, it ensures the inequality \eqref{eq:326}, where $n_1$ and $n_2$ are defined in \eqref{eq:n1 n2 def}. By combining \eqref{eq:n27} with the index $n$ of the incident wave specified in~\eqref{eq:u^i 4.1}, which meets the condition \eqref{eq:n1n2 max}, we apply Theorem~\ref{thm:internal surface localization for scattering problem} to conclude that the total field $\mathbf{u}$ within $D$ and the scattered field $\mathbf{u}^s$ in $\mathbb{R}^3 \setminus D$ exhibit boundary localization. Moreover, given that the index $n$ of the incident field $\mathbf{u}^i$ satisfies~\eqref{eq:n27}, Proposition~\ref{prop:4.3} directly confirms the manifestation of stress concentration. Thus, the internal total field $\mathbf{u}$ in $D$ and the external scattered field $\mathbf{u}^s$ in $\mathbb{R}^3 \setminus D$ exhibit both boundary localization and stress concentration simultaneously.
\end{proof}

	Finally, in Remark~\ref{rem:4}, we rigorously establish that surface resonance necessarily induces stress concentration.

	\begin{rem}\label{rem:4}

		By combining the definitions of \(n_1\) and \(n_2\) in \eqref{eq:n1 n2 def} with the high-contrast parameter \(\delta \ll 1\) (representing the ratio of material parameters between the hard  inclusion $D$ and the soft background medium $\mathbb R^3 \setminus D$), we establish that the stress concentration condition \eqref{eq:n46} is less restrictive than the surface resonance condition \eqref{eq:n4.2}.  Remarkably, our analysis reveals that these distinct phenomena can be selectively achieved by appropriate choice of the incident wave index \(n\):
\begin{itemize}
    \item surface resonance occurs when \(n\) satisfies only \eqref{eq:n delta con};
    \item stress concentration manifests when \(n\) satisfies only \eqref{eq:n27}.
\end{itemize}
While neither condition guarantees boundary localization, the implication \eqref{eq:n delta con} $\Rightarrow$ \eqref{eq:n27} demonstrates that surface resonance invariably induces stress concentration when \(n\) fulfills \eqref{eq:n delta con}.
	\end{rem}

		\section*{Concluding Remarks}


	In this paper, we present a detailed theoretical analysis of quasi-Minnaert resonance and stress concentration for radial geometry inclusions. We demonstrate that the emergence of quasi-Minnaert resonance and stress concentration is intricately governed by two key factors: (i) the choice of the incident wave, particularly its index $n$,  and (ii) the material parameters, specifically the ratio of Lam\'e parameters between the hard inclusion and the soft background. Our findings contribute significantly to the theoretical understanding of composite materials and metamaterials. Recent studies have explored sub-wavelength resonators in periodic configurations   \cite{ADH21,FA22,QY18} and multilayered elastic structures \cite{BM10,KZD24,WW24} in the sub-wavelength regime have been investigated.  However, the complex topological nature of these periodic and multilayered systems introduces substantial challenges, requiring advanced layer potential theory and spectral analysis of associated layer operators. Due to these complexities, a comprehensive investigation of quasi-Minnaert resonance in such settings remains an open problem. We will address this in future work.


		\bigskip
		
	\noindent\textbf{Acknowledgment.}
	The work of H. Diao is supported by the National Natural Science Foundation of China  (No. 12371422) and the Fundamental Research Funds for the Central Universities, JLU.  The work of H. Liu is supported by the Hong Kong RGC General Research Funds (projects 11304224, 12301420 and 11300821),  the NSFC/RGC Joint Research Fund (project N\_CityU101/21), the France-Hong Kong ANR/RGC Joint Research Grant, A-CityU203/19.

	\end{document}